\theoremstyle{plain}
\newtheorem{thm}{\protect\theoremname}
\theoremstyle{definition}
\newtheorem{example}[thm]{\protect\examplename}
\theoremstyle{definition}
\theoremstyle{remark}
\newtheorem{rem}[thm]{\protect\remarkname}
\theoremstyle{plain}
\newtheorem{prop}[thm]{\protect\propositionname}
\theoremstyle{plain}
\newtheorem{cor}[thm]{\protect\corollaryname}
\theoremstyle{plain}
\newtheorem{lem}[thm]{\protect\lemmaname}
\newcommand{\rrvert}{\vert}
\newcommand{\llvert}{\vert}
\theoremstyle{definition}
\newtheorem{myassumption}{Assumption}
\providecommand{\conditionname}{Condition}
\providecommand{\corollaryname}{Corollary}
\providecommand{\examplename}{Example}
\providecommand{\lemmaname}{Lemma}
\providecommand{\propositionname}{Proposition}
\providecommand{\remarkname}{Remark}
\providecommand{\theoremname}{Theorem}
\numberwithin{equation}{section}
\numberwithin{thm}{section}
\begin{document}
\global\long\def\epsilon{\varepsilon}%

\global\long\def\E{\mathbb{E}}%

\global\long\def\I{\mathbf{1}}%

\global\long\def\N{\mathbb{N}}%

\global\long\def\R{\mathbb{R}}%

\global\long\def\C{\mathbb{C}}%

\global\long\def\Q{\mathbb{Q}}%

\global\long\def\P{\mathbb{P}}%

\global\long\def\D{\Delta_{n}}%

\global\long\def\dom{\operatorname{dom}}%

\global\long\def\b#1{\mathbb{#1}}%

\global\long\def\c#1{\mathcal{#1}}%

\global\long\def\s#1{{\scriptstyle #1}}%

\global\long\def\u#1#2{\underset{#2}{\underbrace{#1}}}%

\global\long\def\r#1{\xrightarrow{#1}}%

\global\long\def\mr#1{\mathrel{\raisebox{-2pt}{\ensuremath{\xrightarrow{#1}}}}}%

\global\long\def\t#1{\left.#1\right|}%

\global\long\def\l#1{\left.#1\right|}%

\global\long\def\f#1{\lfloor#1\rfloor}%

\global\long\def\sc#1#2{\langle#1,#2\rangle}%

\global\long\def\abs#1{\lvert#1\rvert}%

\global\long\def\bnorm#1{\Bigl\lVert#1\Bigr\rVert}%

\global\long\def\wraum{(\Omega,\c F,\P)}%

\global\long\def\fwraum{(\Omega,\c F,\P,(\c F_{t}))}%

\global\long\def\norm#1{\lVert#1\rVert}%

\date{}

\author{Randolf Altmeyer \\ \textit{ Imperial College London \footnote{Department of Mathematics, London, UK. Email: r.altmeyer@imperial.ac.uk, \newline The author is grateful for helpful comments from the Associate Editor and two anonymous referees. The author thanks Richard Nickl for many helpful discussions and comments. }}}

\title{Polynomial time guarantees for sampling based posterior inference in high-dimensional generalised linear models}

\maketitle

\begin{abstract}
   The problem of computing posterior functionals in general high-dimensional
statistical models with possibly non-log-concave likelihood functions is
considered. Based on the proof strategy of Nickl and Wang (2022), 
but using
only local likelihood conditions and without relying on M-estimation theory,
nonasymptotic statistical and computational guarantees are provided for
a gradient based MCMC algorithm. Given a suitable initialiser, these guarantees
scale polynomially in key algorithmic quantities. The abstract results
are applied to several concrete statistical models, including density estimation,
nonparametric regression with generalised linear models and a canonical
statistical non-linear inverse problem from PDEs.
\end{abstract}

\medskip

\noindent\textit{MSC 2000 subject classification}: Primary: 62F15, 62G05, 65C05;

\noindent
\textit{Keywords:} Generalised linear model, nonparametric Bayes, Gaussian process prior, exit times, Langevin MCMC.

\section{Introduction}
\label{sec1}

Posterior inference for high-dimensional statistical models is increasingly
important in contemporary applications, particularly in the physical sciences
and in engineering \citep{reich2015,stuart2010,ghanem2017}. Computing relevant
functionals such as the posterior mean, mode or quantiles often relies
on iterative sampling algorithms. Without additional structural assumptions,
however, the mixing times of these algorithms can scale exponentially in
the model dimension $p$ or the sample size $n$
\citep{eberle2016,tzen2018}. In this case, valid inference on an underlying
ground truth requiring $p\asymp n^{\rho}$, $\rho >0$, is intractable. Overcoming
such computational hardness barriers is crucial to allow for efficient
sampling based Bayesian procedures.

A canonical sampling approach uses Markov chain Monte Carlo (MCMC) algorithms
\citep{robert1999}. These generate a specifically designed Markov chain
$(\vartheta _{k})_{k=1}^{\infty}$, whose marginal laws
$\mathcal{L}(\vartheta _{k})$ approximate up to a target precision level
the posterior distribution with probability density

\begin{equation}
\pi \bigl(\theta |Z^{(n)}\bigr)\propto e^{\ell _{n}(\theta )}\pi (\theta ),
\quad \theta \in \mathbb{R}^{p}. \label{eq:posteriorDensity}
\end{equation}

Here, the data $Z^{(n)}$ are observations in a statistical model depending
on a ground truth $\theta _{0}\in \ell ^{2}(\mathbb{N})$ with log-likelihood
function $\ell _{n}$ and prior density $\pi $. Implementations of concrete
posterior distributions using MCMC sampling algorithms have been discussed
in various works \citep{lenk1988,lenk1991,tokdar2007,mariucci2020}, but
computational guarantees are only rarely addressed. The important results
of \citep{hairer2014,belloni2009} show that sampling at polynomial cost
is possible, in principle, but their assumptions are rather restrictive
and not explicit in their quantitative dependence on $n$ and $p$, see
\citep{nickl2020a} for a discussion and additional references. To break
the curse of dimensionality, and often also the
\emph{curse of non-linearity}, other sampling approaches replace the complex
posterior measure by a more simple object
\citep{ray2021,vanleeuwen2019,rue2009}, yielding empirically efficient
procedures, but with unclear relation to the true posterior measure.

Suppose that the computational complexity of some MCMC algorithm arises
mainly from the number of iterations. In this case, existing guarantees
for practically feasible mixing times, growing at most polynomially in
$n$, $p$, are essentially limited to strongly concave
$\log \pi (\cdot |Z^{(n)})$ with Lipschitz-gradients
\citep{dalalyan2017,dwivedi2018,lovasz2007,wu2022minimax}. These two properties
are not satisfied for many relevant posteriors, even with Gaussian priors.
We discuss this issue for generalised linear models (GLMs)
\citep{nelder1972} (see Examples \ref{exa:GLM} and \ref{exa:negative} below),
but it applies more broadly to mixture models, density estimation and statistical
non-linear inverse problems
\citep{monard2021a,monard2019,giordano2020b,abraham2020,nickl2020b}, among
others. The lack of conjugacy in these models seems to explain why sampling
is difficult, but also conjugate models can suffer from multimodality and
slow mixing \citep{hastie2015sampling}. For related discussions on MCMC
in different statistical settings and on closely related optimisation algorithms
see
\citep{arous2020,rebeschini2015,talwar2019,kunisky2022,ma2019,raginsky2017,belloni2009,chewi2022}.

In a recent contribution Nickl and Wang \citep{nickl2020a} obtain polynomial
time sampling guarantees in a specific example involving a partial differential
equation (PDE) and a Gaussian process prior. To go beyond the non-concave
setting, the key idea, which was later extended by \citep{bohr2021a} to
other PDEs, is to rely on the Fisher information for providing a natural
statistical notion of curvature for the log-likelihood function near
$\theta _{0}$. By combining empirical process techniques with tools from
Bayesian nonparametrics \citep{ghosal2017}, there exists a high-dimensional
region $\mathcal{B}\subset \mathbb{R}^{p}$ of parameters near
$\theta _{0}$, where the posterior measure concentrates most of its mass
and where $\ell _{n}$ is \emph{locally} strongly concave with high probability.
Convexifying~$-\ell _{n}$ yields a surrogate posterior measure, whose log-density
$\log \tilde{\pi}(\cdot |Z^{(n)})$ is \emph{globally} strongly concave with
Lipschitz-gradients, and which is close to the true posterior measure in
Wasserstein distance with high probability. Given a problem-specific initialiser
$\theta _{\operatorname{init}}\in \mathbb{R}^{p}$ to identify the region
$\mathcal{B}$ in a data-driven fashion,
$\tilde{\pi}(\cdot |Z^{(n)})$ can be leveraged to generate approximate
samples of the posterior by a gradient-based Langevin MCMC-scheme. This
yields nonasymptotic sampling guarantees for posterior functionals with
polynomial dependence of key algorithm-specific parameters
\emph{simultaneously} on $n$, $p$ and the target precision level, leveraging
recent results by Durmus and Moulines \citep{durmus2019}.

In this work, we extend the proof strategy of \citep{nickl2020a} beyond
the PDE setting to general high-dimensional statistical models assuming
only local likelihood conditions, thus demonstrating the feasibility of
efficient sampling based Bayesian inference. The local likelihood conditions
are verified for density estimation and GLMs, comprising important statistical
regression models such as Gaussian, Poisson and logistic regression. To
showcase our approach in a canonical example from the inverse problem literature,
we discuss in detail sampling for an elliptic PDE, sometimes called
\emph{Darcy's problem} (see \citep{nickl2022} and Section~3.7 of
\citep{stuart2010}),\emph{ }with a non-Lipschitz forward map. Moreover,
our approach allows for extending the results of
\citep{nickl2020a,bohr2021a} to non-Gaussian measurement errors.

We further prove that the Langevin Markov chain based on the surrogate
density takes exponentially in $n$ many steps to leave the region of local
curvature, where it coincides with $\pi (\cdot |Z^{(n)})$. Our results
therefore imply that upon initialising into a region of sufficient local
curvature even a standard vanilla Langevin MCMC algorithm is able to compute
posterior aspects at polynomial cost. This is consistent with related results
for gradient based optimisation algorithms showing that local curvature
near the global optimum can improve the rate of convergence
\citep{bach2014}. Sampling algorithms, on the other hand, necessarily have
to explore the full parameter space and therefore depend more heavily on
global properties of the underlying target distribution. Note that, even
if an initialiser near $\theta _{0}$ is available, it is not clear if the
computation of posterior functionals, which depend on the whole posterior
measure, is feasible. The existence of a suitable initialiser is postulated
here, and finding one in polynomial time may be in itself a non-trivial
task. As an illustration we prove the existence of a suitable initialiser
for GLMs.

Our proofs use specific results for unadjusted Langevin MCMC-schemes, but
extensions to other sampling algorithms seem possible as long as they satisfy
sampling guarantees for log-concave target distributions. The log-concave
approximation of the posterior measure in
\citep{nickl2020a,bohr2021a} relies on expanding
$\log \pi (\cdot |Z^{(n)})$ near the maximum-a-posteriori (MAP) estimator.
The analysis of the latter is based on M-estimation theory
\cite{vandegeer2007} and seems to preclude, for example, regression models
with unbounded and possibly non-Lipschitz regression functions, which are,
however, present in most of the applications studied below. Instead, our
proof for the log-concave approximation is fully Bayesian and takes a novel
route by establishing first contraction of the surrogate posterior and
then expanding $\log \tilde\pi (\cdot |Z^{(n)})$ near $\theta _{0}$.

Conceptually, the approximation by a log-concave measure is different from
the more traditional Gaussian Laplace-type approximations
\citep{schillings2020,helin2022,rue2009,belloni2009}, where the posterior
is replaced by a quadratic with constant covariance matrix relative to
a well-chosen centring point (often the MAP estimator). In contrast, we
leave the log-likelihood function locally unchanged. This added flexibility
seems to be crucial for obtaining the fast convergence towards the posterior
measure in our results. Related to this are Bernstein-von Mises theorems,
which generally do not hold in high-dimensional settings
\citep{castillo2014a,freedman1999}. In particular, \citep{nickl2022a} have
recently shown that no such theorem exists for Darcy's problem, while we
show that efficient MCMC-sampling is possible (see also
\citep[Remark~5.4.2]{nickl2022}).

The rest of the paper is organised as follows. Section~\ref{sec:mainResults} develops the approximation by the surrogate posterior
measure in a general context and presents convergence guarantees for the
surrogate and vanilla Langevin sampler. In Section~\ref{sec:Applications}, applications to several statistical models are
discussed in detail, including density estimation, nonparametric regression
with GLMs and Darcy's problem.

We write $a\lesssim b$ if $a\leq Cb$ for a universal constant $C$, and
$a\asymp b$ if $a\lesssim b$ and $b\lesssim a$. For a measurable space
$(\mathcal{X},\mathcal{A})$, equipped with a measure
$\nu _{\mathcal{X}}$, let $L^{p}(\mathcal{X})$,
$1\leq p\leq \infty $, be the spaces of $p$-integrable $\mathcal{A}$-measurable
functions with respect to $\nu _{\mathcal{X}}$ normed by
$\lVert \cdot \rVert _{L^{p}}$. Denote by
$\ell ^{\infty}(\mathbb{N})$ the usual sequence spaces with norm
$\lVert \cdot \rVert _{\ell ^{p}}$. Set
$\lVert \cdot \rVert =\lVert \cdot \rVert _{\ell ^{2}}$. For a matrix
$M\in \mathbb{R}^{p\times p}$ let
$\lVert M\rVert _{\operatorname{op}}$ be the operator norm. The minimal
and maximal eigenvalues of a positive symmetric matrix $\Sigma $ are denoted
by $\lambda _{\operatorname{min}}(\Sigma )$,
$\lambda _{\operatorname{max}}(\Sigma )$. For two Borel probability measures
$\mu _{1}$, $\mu _{2}$ on $\mathbb{R}^{p}$ with finite second moments the
(squared) Wasserstein distance is defined as
$W_{2}^{2}(\mu _{1},\mu _{2})=\operatorname{\inf }\int _{
\mathbb{R}^{p}\times \mathbb{R}^{p}}\lVert \theta -\theta '\rVert ^{2}d
\mu (\theta ,\theta ')$, where the infimum is computed over all couplings
$\mu $ of $\mu _{1}$, $\mu _{2}$. If $\mathcal{X}$ is a smooth domain in
$\mathbb{R}^{d}$, $d\in \mathbb{N}$, then denote by
$C^{k}(\mathcal{X})$, $0\leq k\leq \infty $, the spaces of $k$-times differentiable
real-valued functions. For a real-valued function
$(\theta ,x)\mapsto h(\theta ,x)$ defined on a subset of
$\mathbb{R}^{p}\times \mathcal{X}$ and considered as a map on
$\mathbb{R}^{p}$ for fixed $x$ its gradient and Hessian if they exist are
denoted by $\nabla h=\nabla _{\theta}h$,
$\nabla ^{2}h=\nabla ^{2}_{\theta}h$, respectively. A~function
$h:\mathbb{R}^{p}\rightarrow \mathbb{R}$ is Lipschitz if the norm $
\lVert h\rVert _{\mathrm{Lip}}=\sup_{x,y\in \mathbb{R}^{p},x
\neq y}\frac{ \llvert h(x)-h(y) \rrvert }{\lVert x-y\rVert }$ is finite.

\section{General sampling guarantees}\label{sec:mainResults}

In this section our goal is to obtain polynomial time sampling guarantees
for high dimensional posteriors under general high level assumptions on
the statistical model. Let
$\Theta \subset \ell ^{\infty}(\mathbb{N})$ be a parameter space containing
$\mathbb{R}^{p}$ and let $Z^{(n)}=(Z_{i})_{i=1}^{n}$ be $n$ independent
observations with values in $\mathcal{Z}^{n}$, where
$(\mathcal{Z},\mathcal{O})$ is a measurable space, drawn from a product
measure
$\mathbb{P}_{\theta _{0}}^{n}=\otimes _{i=1}^{n}\mathbb{P}_{\theta}$ for
an unknown parameter $\theta _{0}\in \Theta $. Denote by
$p_{\theta}$ the probability density of $\mathbb{P}_{\theta}$ for
$\theta \in \Theta $ with respect to a dominating measure $\nu $. The\emph{
}log-likelihood function is
\begin{equation*}
\ell _{n}(\theta )\equiv \ell _{n}\bigl(\theta
,Z^{(n)}\bigr)=\sum_{i=1}^{n} \log
p_{\theta}(Z_{i})=\sum_{i=1}^{n}
\ell (\theta ,Z_{i}).
\end{equation*}
The Bayesian approach assumes $\theta \sim \Pi $ for a prior probability
measure $\Pi \equiv \Pi _{n}$ on $\Theta $ which may depend on $n$. The
posterior distribution $\Pi (\cdot |Z^{(n)})$ arises from Bayes' formula.
It is common to consider as priors infinite random series
on~$\Theta $ with decaying coefficients such as Gaussian process or Laplace
priors \cite{vandervaart2008,agapiou2021a}. They allow for modelling parameters
$\theta _{0}$ with Sobolev-type or Besov-type regularities (as in
Theorem~\ref{thm:GaussianContraction} below). In order to sample from the posterior
we need to use a finite dimensional parameter space, for example relative
to the first $p$ components of $\theta _{0}$. We therefore suppose that
$\Pi $ is supported on $\mathbb{R}^{p}$ and that it has a Lebesgue density
$\pi \equiv \pi _{n}$, so the posterior $\Pi (\cdot |Z^{(n)})$ has the
probability density
\begin{align}
\pi \bigl(\theta |Z^{(n)}\bigr) & = \frac{\prod_{i=1}^{n}p_{\theta}(Z_{i})\pi (\theta )}
{\int _{\Theta}\prod_{i=1}^{n}p_{\theta}(Z_{i})\pi (\theta )\,d\theta} \propto
e^{\ell _{n}(\theta )+\log \pi (\theta )},\quad \theta \in \mathbb{R}^{p}.
\label{eq:posteriorDensity_normalised}
\end{align}

\subsection{Strong concavity and generalised linear models}
\label{sec:setup}

A function $f:\mathbb{R}^{p}\rightarrow \mathbb{R}$ is strongly convex,
or equivalently $-f$ is strongly concave, and has Lipschitz gradients if
for curvature and Lipschitz constants $m_{f},\Lambda _{f}>0$

\begin{align}
\begin{cases} f(\theta ) \geq f\bigl(\theta '\bigr)+
\bigl(\theta -\theta '\bigr)^{\top}\nabla f\bigl(\theta
'\bigr)+ \frac{m_{f}}{2}\bigl\lVert \theta -\theta '
\bigr\rVert ^{2},
\\
\bigl\lVert \nabla f(\theta )-\nabla f\bigl(\theta '\bigr)\bigr
\rVert \leq \Lambda _{f} \bigl\lVert \theta -\theta '
\bigr\rVert , \end{cases}\quad \text{and all }\theta ,\theta '\in
\mathbb{R}^{p}. \label{eq:stronglyConvex} 
\end{align}

For twice continuously differentiable $f$ this is equivalent to

\begin{align*}
m_{f}=\inf_{\theta \in \mathbb{R}^{p}}\lambda _{\operatorname{min}}\bigl(
\nabla ^{2} f(\theta )\bigr)>0,\qquad \Lambda _{f} = \sup
_{\theta \in
\mathbb{R}^{p}}\lambda _{\operatorname{min}}\bigl(\nabla ^{2} f(
\theta )\bigr)>0.
\end{align*}
As a motivating example let us discuss for the generalised linear models
introduced by \cite{nelder1972} whether
$f=-\log \pi (\cdot |Z^{(n)})$ satisfies \eqref{eq:stronglyConvex}. When
$\ell _{n}$ is itself concave, then strong concavity can be induced by
the prior. This idea works for certain GLMs with Gaussian priors, see also
the numerical examples in \citep{dalalyan2017,wu2022minimax}.
\begin{example}[Generalised linear models]%
\label{exa:GLM}
Set $\nu =\xi \otimes \nu _{\mathcal{X}}$, where $\xi $ is a Borel measure
on $\mathbb{R}$. Given an orthonormal basis $(e_{k})_{k\geq 1}$ of
$L^{2}(\mathcal{X})$, a parameter
$\theta \in \Theta \subset \ell ^{2}(\mathbb{N})$ is identified with a
function in $L^{2}(\mathcal{X})$ by the isometry
$\Phi (\theta )=\sum_{k=1}^{\infty}\theta _{k}e_{k}$. Let
$g:\mathcal{I}\rightarrow \mathbb{R}$ be an invertible and continuous
\emph{link} function, defined on some interval
$\mathcal{I}\subset \mathbb{R}$, and suppose that
$Z_{i}=(Y_{i},X_{i})$ takes values in $\mathbb{R}\times \mathcal{X}$ such
that the law of the response variables $Y_{i}$ follows conditional on
$X_{i}=x$ a one-parameter exponential family with
$g(\mathbb{E}_{\theta}[Y_{i}|X_{i}])=\Phi (\theta )(X_{i})$. If
$p_{X}$ denotes the $\nu _{\mathcal{X}}$-density of the covariates
$X_{i}$, then this means that the coordinate $\nu $-densities
$p_{\theta}$ are of the form
\begin{equation*}
p_{\theta}(y,x)=\exp \bigl(yb_{\theta}(x)-A \bigl(b_{\theta}(x)
\bigr) \bigr)p_{X}(x),\quad y\in \mathbb{R},x\in \mathcal{X},
\end{equation*}
with a convex function $A:\mathbb{R}\mapsto [0,\infty )$ and
$b_{\theta}(x)=(A')^{-1}  (g^{-1}  (\Phi (\theta )(x)  )
  )$. So for $\theta \in \mathbb{R}^{p}$
\begin{align}
\nabla ^{2}\ell _{n}(\theta ) &= \sum
_{i=1}^{n} \bigl(\bigl(Y_{i}-A'
\bigl(b_{
\theta}(X_{i})\bigr)\bigr)\nabla ^{2}
b_{\theta}(X_{i})- A''
\bigl(b_{\theta}(X_{i})\bigr) \nabla b_{\theta}(X_{i})
\nabla b_{\theta}(X_{i})^{\top} \bigr). \label{eq:GLM_hessian}
\end{align}
With the canonical link $g=(A')^{-1}$, the map
$\theta \mapsto b_{\theta}=\Phi (\theta )$ is linear, so
$\nabla b_{\theta}$ does not depend on $\theta $,
$\nabla ^{2} b_{\theta}=\nabla ^{2}\Phi (\theta )=0$ and
$-\ell _{n}(\theta )$ is convex because $A$ is. For a Gaussian prior we
have
$\log \pi (\theta )=-\lVert \Sigma ^{1/2}_{\pi }\theta \rVert ^{2}/2$ with
a positive definite $\Sigma _{\pi}\in \mathbb{R}^{p\times p}$, so
$m_{f}=\lambda _{\operatorname{min}}(\Sigma _{\pi})$ and
$\Lambda _{f}=\lambda _{\operatorname{max}}(\Sigma _{X})\sup_{x\in
\mathbb{R}^{p}}A''(x)+\lambda _{\operatorname{max}}(\Sigma _{\pi})$,
$\Sigma _{X}=\sum_{i=1}^{n}\nabla b_{\theta}(X_{i})\nabla b_{\theta}(X_{i})^{
\top}$. Note that $A''$ is bounded by 1 for Gaussian and logistic regression
with $A(x)=x^{2}/2$ and $A(x)=\log (1+e^{x})$, respectively.
\end{example}

For non-concave $\ell _{n}$ adding a strongly concave prior is not enough
to ensure that the log-posterior density is strongly concave. Apart from the basic cases in the first example its gradients are usually
not Lipschitz.

\begin{example}%
\label{exa:negative}
When $g$ is not the canonical link in the example above such as in probit
regression, then $\theta \mapsto b_{\theta}$ is non-linear and
$-\ell _{n}$ is usually not convex. For Poisson regression with
$A(x)=e^{x}$, even with the canonical link $g(x)=\log x$ the Lipschitz
constant of $\theta \mapsto \nabla \ell _{n}(\theta )$ grows exponentially
as $\lVert \theta \rVert \rightarrow \infty $. Other examples for non-convex
$-\ell _{n}$ appear in non-linear inverse problems where $\Phi $ is replaced
with a forward map corresponding to the solution of a partial differential
equation (see Section~\ref{subsec:Darcy's-problem-1} for concrete cases).
\end{example}

In order to motivate how strong concavity can be relaxed note in
\eqref{eq:GLM_hessian} that
\begin{align*}
-\mathbb{E}^{n}_{\theta _{0}}\nabla ^{2}\ell
_{n}(\theta _{0}) &= n \mathbb{E} \bigl[A''
\bigl(b_{\theta _{0}}(X_{1})\bigr)\nabla b_{\theta _{0}}(X_{1})
\nabla b_{\theta _{0}}(X_{1})^{\top} \bigr].
\end{align*}
Assuming the design density $p_{X}$ is lower bounded, this suggests heuristically
that the eigenvalues of $-\nabla ^{2}\ell _{n}(\theta )$ are contained
in an interval $[c_{1}n,c_{2}n]$ for constants $c_{1},c_{2}>0$ for all
$\theta $ sufficiently close to the truth $\theta _{0}$, while also assuming
that $-\nabla ^{2}\ell _{n}(\theta )$ is concentrated with high
$\mathbb{P}^{n}_{\theta _{0}}$-probability around its expectation uniformly
for all such $\theta $. It is therefore reasonable to expect that
$\log \pi (\cdot |Z^{(n)})$ is locally concave and has Lipschitz gradients
near $\theta _{0}$, but generally the curvature and Lipschitz constants
will degenerate as the region around $\theta _{0}$ grows.

From a statistical point of view this heuristic is justified, because the
expectation of $-\nabla ^{2}\ell _{n}(\theta _{0})$ is closely related
to the Fisher information of the underlying statistical model. A~non-degenerate
Fisher information is the basis of the Fisher-scoring method for computing
the maximum likelihood estimator by gradient descent in finite dimensional
models \cite{mccullagh2019}. While the Fisher information may be degenerate
in infinite dimensional models \cite{nickl2022a}, we verify quantitative
curvature bounds on
$-\mathbb{E}^{n}_{\theta _{0}}\nabla ^{2}\ell _{n}(\theta )$, depending
on $n$ and $p$, for several important cases in Section~\ref{sec:Applications}.

\subsection{Main assumptions}
\label{subsec:Main-assumptions}

Since the posterior density \eqref{eq:posteriorDensity} is generally not
log-concave, we proceed by constructing a log-concave approximation and
a suitable sampling algorithm. As explained above, this approximation is
guided by the statistical model itself. We begin by stating the necessary
assumptions for the log-concave approximation. For a radius
$\eta >0$ define the high-dimensional region
\begin{equation}
\mathcal{B}= \bigl\{ \theta \in \mathbb{R}^{p}:\lVert \theta -\theta
_{*,p} \rVert \leq \eta \bigr\} , \label{eq:GLM_B} 
\end{equation}
where $\theta _{*,p}\in \mathbb{R}^{p}$ is an approximation of
$\theta _{0}$, for example its $\mathbb{R}^{p}$-projection
$(\theta _{0,1},\dots ,\theta _{0,p})$. We require quantitative control
on the regularity and curvature of the log-likelihood on
$\mathcal{B}$, relative to the sample size $n$, the model dimension
$p$ and a `statistical accuracy' $\delta _{n}>0$, which will be specified
below.
\renewcommand{\themyassumption}{A\arabic{myassumption}}
\begin{myassumption}[regularity]%
\label{assu:a1}%
The data $Z^{(n)}$ arise from the law $\mathbb{P}_{\theta _{0}}^{n}$ for
a fixed $\theta _{0}\in \Theta $. There exists an event
$\mathcal{E}$ and constants $c_{1},c_{2}>0$ with
$\mathbb{P}_{\theta _{0}}^{n}(\mathcal{E})\geq 1-c_{2}e^{-c_{1}n
\delta _{n}^{2}}$ such that for some $c_{\max}\geq c_{\min}>0$,
$\kappa _{1},\kappa _{2},\kappa _{3}\geq 0$ the following holds on
$\mathcal{E}$:
\begin{enumerate}[label=(\roman*)]
\item[{(i)}] (local regularity)
$\theta \mapsto \ell _{n}(\theta )\in C^{2}(\mathcal{B})$.
\item[{(ii)}] (local boundedness)
$\lVert \nabla \ell _{n}(\theta _{*,p})\rVert \leq c_{\max}n\delta _{n}p^{
\kappa _{1}}$,
$\sup_{\theta \in \mathcal{B}}\lVert \nabla ^{2}\ell _{n}(\theta )
\rVert _{\operatorname{op}}\leq c_{\max}np^{\kappa _{2}}$.
\item[{(iii)}] (local curvature)
$\inf_{\theta \in \mathcal{B}}\lambda _{\min}  (-\nabla ^{2}
\ell _{n}(\theta )  )\geq c_{\min}np^{-\kappa _{3}}$.
\end{enumerate}
Furthermore,
$\eta \geq (4c_{\operatorname{max}}/c_{\operatorname{min}})
\tilde{\delta}_{n,p}\log (n+e^{8})$, with the
\emph{curvature adjusted rate}
$\tilde{\delta}_{n,p}=\max (\delta _{n}^{1/\beta},\delta _{n}p^{
\kappa _{1}+\kappa _{3}})$, $\beta \geq 1$.
\end{myassumption}

In order to access the region $\mathcal{B}$ in a data-driven fashion we
expect to have access to some
$\theta _{\operatorname{init}}\in \mathcal{B}$, which will also serve as
the initialiser of the MCMC scheme later on.

\begin{myassumption}[initialiser]%
\label{assu:a2}
$\lVert \theta _{\operatorname{init}}-\theta _{*,p}\rVert \leq \eta /8$.
\end{myassumption}

The behaviour of $\log \pi (\cdot |Z^{(n)})$ outside the region
$\mathcal{B}$ will be restricted by requiring that the posterior distribution
contracts around $\theta _{*,p}$ at rate $\delta _{n}$. For
$c,L,\beta >0$ define the events
\begin{align}
\mathcal{D}_{n}(c,L) &= \bigl\{\Pi \bigl(\theta \in
\mathbb{R}^{p}:\lVert \theta -\theta _{*,p}\rVert
^{\beta }>L\delta _{n} \big|Z^{(n)} \bigr)> e^{-cn\delta _{n}^{2}}
\bigr\}, \label{eq:DncL} 
\\
\bar{\mathcal{D}}_{n}(c) &= \biggl\{
\int _{\lVert \theta -\theta _{*,p}
\rVert \leq \delta _{n}}e^{\ell _{n}(\theta )-\ell _{n}(\theta _{0})} \pi (\theta )\,d\theta \leq
e^{-cn\delta _{n}^{2}} \biggr\}. \label{eq:barDnc}
\end{align}

\begin{myassumption}[posterior contraction]%
\label{assu:a3}%
Suppose that $\delta _{n}\rightarrow 0$ with
$n\delta _{n}^{2}\rightarrow \infty $ as $n\rightarrow \infty $. Let
$\beta \geq 1$ and suppose that
$\lVert \theta _{*,p}\rVert \leq c_{0}$ for $c_{0}>0$. There exist first
for any $c>0$ and any sufficiently large $L$ constants
$C_{1},C'_{1}>0$, and second constants $C_{2},C_{2}',C_{3}>0$ such that
\begin{align*}
\mathbb{P}_{\theta _{0}}^{n} \bigl(\mathcal{D}_{n}(c,L)
\bigr) & \leq C'_{1}e^{-C_{1}n\delta _{n}^{2}},\qquad
\mathbb{P}_{\theta _{0}}^{n} \bigl(\bar{\mathcal{D}}_{n}(C_{3})
\bigr) \leq C'_{2}e^{-C_{2}n
\delta _{n}^{2}}.
\end{align*}
\end{myassumption}

Both conditions in Assumption~\ref{assu:a3} can be verified by standard
tools from Bayesian nonparametrics, following the seminal work
\citep{ghosal2000a}, with the event $\bar{\mathcal{D}}_{n}(c)$ being a
small ball condition related to the normalising factors in
\eqref{eq:posteriorDensity_normalised}. A~standard approach is to show
first posterior contraction around $\theta _{0}$ in Hellinger distance
(cf. the proof of Theorem~\ref{thm:GaussianContraction}), so Assumption~\ref{assu:a3}
implicitly encodes the bias condition that
$\theta _{*,p}$ is $\delta _{n}$-close to $\theta _{0}$. Informally speaking,
posterior contraction ensures that $\theta \in \mathbb{R}^{p}$ with
$\lVert \theta -\theta _{*,p}\rVert ^{\beta}>L\delta _{n}$ are negligible
for posterior inference and therefore can be safely ignored by the sampling
algorithm. Note that $\eta /\delta _{n}\rightarrow \infty $ as
$n\rightarrow \infty $ by the lower bound on $\eta $ in Assumption~\ref{assu:a1},
so the posterior concentrates most of its mass on the set
of good curvature $\mathcal{B}$. Finally, the prior is as follows.

\begin{myassumption}[prior]%
\label{assu:a4}
The negative prior log-density $-\log \pi $ satisfies
\eqref{eq:stronglyConvex} with curvature and Lipschitz constants
$m_{\pi}$, $\Lambda _{\pi}$. The unique maximiser
$\theta _{\pi ,\operatorname{max}}\in \mathbb{R}^{p}$ of $\pi $ satisfies
$\lVert \theta _{\pi ,\operatorname{max}}\rVert \leq c_{0}$. Moreover,
$\sup_{n,p}\int _{\mathbb{R}^{p}}\lVert \theta \rVert ^{4}\pi (
\theta )\,d\theta <\infty $.

\end{myassumption}

Concrete examples satisfying this assumption are Gaussian priors and finite-dimensional
approximations of `$p$-exponential' priors
\citep{agapiou2021a,agapiou2024laplace}.

\subsection{Log-concave approximation
of the posterior}
\label{subsec:Log-concave-approximation-of}

If the prior is supported on all of $\mathbb{R}^{p}$ (as it is in the Gaussian
case), then so is the posterior, and it is necessary for a sampling algorithm
to explore the whole support. Given that the negative log-likelihood is
convex on the region $\mathcal{B}$, we want to define a strongly concave
surrogate log-likelihood function which agrees with $\ell _{n}$ on a subset
$\tilde{\mathcal{B}}\subset \mathcal{B}$.

Our construction is similar to Definition~3.5 of \citep{nickl2020a}. For
a smooth function $\varphi :\mathbb{R}\rightarrow [0,\infty )$ with support
in $[-1,1]$, satisfying $\varphi (-x)=\varphi (x)$,
$\int _{\mathbb{R}}\varphi (x)\,dx=1$, define the mollifier
$\varphi _{t}(x)=t^{-1}\varphi (x/t)$, $t>0$. With this define two smooth
auxiliary functions: a `cut-off function'\emph{ }$v:\mathbb{R}^{p}
\rightarrow [0,1]$ and a globally convex function
$v_{\eta}:\mathbb{R}^{p}\rightarrow [0,\infty )$,
$v_{\eta}(t)=(\varphi _{\eta /8}*\gamma _{\eta})(t)$, where
$*$ is the convolution product, and where
\begin{equation*}
v(t)= %
\begin{cases} 1, & t\leq 3/4,
\\
0, & t>7/8, \end{cases} %
\qquad \gamma _{\eta}(t)= %
\begin{cases} 0, & t<5\eta /8,
\\
(t-5\eta /8)^{2}, & t\geq 5\eta /8. \end{cases} %
\end{equation*}
With $c_{\operatorname{max}}$ and $\kappa _{2}$ from Assumption~\ref{assu:a1} set
$K=60c_{\operatorname{max}}\lVert v\rVert _{C^{2}}n  (1+p^{
\kappa _{2}}  )$, and define the function
$\tilde{\ell}_{n}:\mathbb{R}^{p}\rightarrow \mathbb{R}$ by
\begin{align}
\tilde{\ell}_{n}(\theta ) & =v \bigl(\lVert \theta -\theta
_{
\operatorname{init}}\rVert /\eta \bigr) \bigl(\ell _{n}(\theta )-\ell
_{n}( \theta _{\operatorname{init}})\bigr)+\ell _{n}(\theta
_{
\operatorname{init}})-Kv_{\eta} \bigl(\lVert \theta -\theta _{
\operatorname{init}}
\rVert \bigr). \label{eq:surrogate}
\end{align}
We define analogously to (\ref{eq:posteriorDensity}) the `surrogate' posterior
measure $\tilde{\Pi}(\cdot |Z^{(n)})$ with density
\begin{equation}
\tilde{\pi}\bigl(\theta |Z^{(n)}\bigr)= \frac{e^{\tilde{\ell}_{n}(\theta )}\pi (\theta )}{\int _{\Theta}e^{\tilde{\ell}_{n}(\theta )}\pi (\theta )\,d\theta} \propto
e^{\tilde{\ell}_{n}(\theta )}\pi (\theta ). \label{eq:surrogatePosterior} 
\end{equation}

We show now that $\tilde{\ell}_{n}$ indeed coincides with the true log-likelihood
for points near $\theta _{\operatorname{init}}$, and we quantify how the
cut-off and convexification affect the curvature and Lipschitz constants
of $\log \tilde\pi (\cdot |Z^{(n)})$.
\begin{thm}
\label{thm:surrogate}%
Under Assumptions \ref{assu:a1} and \ref{assu:a2} the following holds:
\begin{enumerate}[label=(\roman*)]
\item[{(i)}] $\ell _{n}(\theta )=\tilde{\ell}_{n}(\theta )$ for all
$\theta \in \tilde{\mathcal{B}}=  \{ \theta \in \mathbb{R}^{p}:
\lVert \theta -\theta _{*,p}\rVert \leq 3\eta /8  \}$.
\item[{(ii)}] On the event $\mathcal{E}$, $-\tilde{\ell}_{n}$ satisfies
\eqref{eq:stronglyConvex} with curvature and Lipschitz constants
\begin{equation*}
\tilde{m}=c_{\min}np^{-\kappa _{3}},\qquad \tilde{\Lambda}=420c_{
\operatorname{max}}
\lVert v\rVert _{C^{2}}n \bigl(1+p^{\kappa _{2}} \bigr).
\end{equation*}
\end{enumerate}
In particular, under Assumptions \ref{assu:a1}, \ref{assu:a2} and
\ref{assu:a4}, $-\log \tilde{\pi}(\cdot |Z^{(n)})$ satisfies
\eqref{eq:stronglyConvex} with curvature and Lipschitz constants
\begin{equation*}
m=\tilde{m}+m_{\pi},\qquad \Lambda =\tilde{\Lambda}+\Lambda
_{\pi}.
\end{equation*}
\end{thm}

\begin{proof}
Part (i) is true by the construction of $\tilde{\ell}_{n}$ in (\ref{eq:surrogate})
and the condition on the initialiser in Assumption~\ref{assu:a2}. The supplement
follows immediately from part (ii). For the proof of (ii) let us restrict
to the event $\mathcal{E}$ and write
$\bar{v}=v(\lVert \cdot -\theta _{\operatorname{init}}\rVert /\eta )$,
$\tilde{v}=v_{\eta}(\lVert \cdot -\theta _{\operatorname{init}}
\rVert )$. We consider first the set
$V=\{\theta \in \mathbb{R}^{p}:\lVert \theta -\theta _{
\operatorname{init}}\rVert \leq 3\eta /4\}\subset \mathcal{B}$. On
$V$, $\tilde{v}$ vanishes and $\bar{v}=1$. Hence, by the local curvature
bound from Assumption~\ref{assu:a1}(iii) we have
\begin{equation}
\inf_{\theta \in V}\lambda _{\min} \bigl(-\nabla
^{2}\tilde{\ell}_{n}( \theta ) \bigr)\geq \inf
_{\theta \in \mathcal{B}}\lambda _{\min} \bigl(-\nabla ^{2}\ell
_{n}(\theta ) \bigr)\geq c_{\min}np^{-
\kappa _{3}}.
\label{eq:surrogate_lower_1} 
\end{equation}
Next, Lemma B.5 and the proof of Lemma B.6 in \citep{nickl2020a} (with
$\lambda _{\operatorname{max}}(I)=1$) imply that
$\lVert \nabla \bar{v}(\theta )\rVert \leq \lVert v\rVert _{C^{1}}
\eta ^{-1}$,
$\lVert \nabla ^{2}\bar{v}(\theta )\rVert _{\operatorname{op}}\leq 4
\lVert v\rVert _{C^{2}}\eta ^{-2}$ for all
$\theta \in \mathbb{R}^{p}$, as well as
$\lambda _{\min}(\nabla ^{2}\tilde{v}(\theta ))\geq 1/3$,
$\lVert \nabla ^{2}\tilde{v}(\theta )\rVert _{\operatorname{op}}\leq 6$.
Assumption~\ref{assu:a1}(ii) gives
\begin{align*}
\sup_{\theta \in \mathcal{B}} \bigl\llvert \ell _{n}(\theta )-\ell
_{n}(\theta _{*,p}) \bigr\rrvert 
    & \leq \bigl\lVert \nabla
\ell _{n}(\theta _{*,p})\bigr\rVert \eta +\sup
_{
\theta \in \mathcal{B}}\bigl\lVert \nabla ^{2}\ell _{n}(
\theta )\bigr\rVert _{
\operatorname{op}}\bigl(\eta ^{2}/2\bigr) \\
    &\leq
c_{\max}n \bigl(\delta _{n}p^{
\kappa _{1}}\eta
+p^{\kappa _{2}}\eta ^{2}/2 \bigr),
\\
\sup_{\theta \in \mathcal{B}}\bigl\lVert \nabla \ell _{n}(\theta )
\bigr\rVert & \leq \bigl\lVert \nabla \ell _{n}(\theta
_{*,p})\bigr\rVert +\sup_{\theta \in
\mathcal{B}}\bigl\lVert \nabla
^{2}\ell _{n}(\theta )\bigr\rVert _{
\operatorname{op}}\eta \leq
c_{\max}n \bigl(\delta _{n}p^{\kappa _{1}}+ \eta
p^{\kappa _{2}} \bigr).
\end{align*}
By the triangle inequality
$\sup_{\theta \in \mathcal{B}}|\ell _{n}(\theta )-\ell _{n}(\theta _{
\operatorname{init}})|\leq 2c_{\max}n(\delta _{n}p^{\kappa _{1}}\eta +p^{
\kappa _{2}}\eta ^{2}/2)$. Combining the last two displays and noting that
$\bar{v}$ vanishes outside $\mathcal{B}$ yields by the chain rule for
$\theta \in \mathbb{R}^{p}$
\begin{align*}
\bigl\lVert \nabla ^{2}(\bar{v}\bigl(\ell _{n}-\ell
_{n}(\theta _{
\operatorname{init}})\bigr) (\theta )\bigr\rVert
_{\operatorname{op}} & \leq \sup_{
\theta \in \mathcal{B}} \bigl(\bigl\lVert \nabla
^{2}\bar{v}(\theta )\bigr\rVert _{
\operatorname{op}} \bigl\llvert \ell
_{n}(\theta )-\ell _{n}(\theta _{
\operatorname{init}}) \bigr
\rrvert +2\bigl\lVert \nabla \bar{v}(\theta )\bigr\rVert \bigl\lVert \nabla \ell
_{n}(\theta )\bigr\rVert
\\
&\quad{} + \bigl\llvert \bar{v}(\theta ) \bigr\rrvert \bigl\lVert \nabla
^{2}\ell _{n}(\theta ) \bigr\rVert _{\operatorname{op}} \bigr)
\leq 10c_{\operatorname{max}}\lVert v \rVert _{C^{2}}n \bigl(\eta
^{-1}\delta _{n}p^{\kappa _{1}}+p^{
\kappa _{2}} \bigr)\\
&=:
K/6,
\end{align*}
using $\eta \geq \delta _{n}p^{\kappa _{1}+\kappa _{3}}$ by the lower bound
on $\eta $ in Assumption~\ref{assu:a1}. Consequently,
\begin{align*}
\inf_{\theta \in V^{c}}\lambda _{\min} \bigl(-\nabla
^{2} \tilde{\ell}_{n}(\theta ) \bigr) & \geq -\sup
_{\theta \in
\mathbb{R}^{p}}\bigl\lVert \nabla ^{2}(\bar{v}\ell
_{n}) (\theta )\bigr\rVert _{
\operatorname{op}}+K/3\geq K/6.
\end{align*}
Together with (\ref{eq:surrogate_lower_1}),
$\lVert v\rVert _{C^{2}}\geq 1$ and $K/6\geq c_{\min}n$ we obtain
the wanted curvature bound:
\begin{equation*}
\inf_{\theta \in \mathbb{R}^{p}}\lambda _{\min} \bigl(-\nabla
^{2} \tilde{\ell}_{n}(\theta ) \bigr)\geq \min
\bigl(c_{\min}np^{-
\kappa _{3}},K/6 \bigr)=c_{\min}np^{-\kappa _{3}}=:m.
\end{equation*}
At last, the gradient-Lipschitz bound follows for
$\theta \neq \theta '\in \mathbb{R}^{p}$ from
\begin{equation*}
\sup_{\theta \in \mathbb{R}^{p}}\bigl\lVert \nabla ^{2}\tilde{\ell
}_{n}( \theta )\bigr\rVert _{\operatorname{op}}\leq \sup
_{\theta \in \mathbb{R}^{p}} \bigl\lVert \nabla ^{2}(\bar{v}\ell
_{n}) (\theta )\bigr\rVert _{
\operatorname{op}}+K\sup_{\theta \in \mathbb{R}^{p}}
\bigl\lVert \nabla ^{2} \tilde{v}(\theta )\bigr\rVert _{\operatorname{op}}
\leq K/6+6K\leq 7K=: \Lambda .
\end{equation*}\qedhere
\end{proof}

Our first main result in Theorem~\ref{thm:surrogateWasserstein} shows that
the surrogate posterior is very close to the true posterior in Wasserstein
distance with high $\mathbb{P}^{n}_{\theta _{0}}$-probability. A~key step
in the proof is to argue first by the log-concavity of
$\tilde{\Pi}(\cdot |Z^{(n)})$ instead of the standard approach in Bayesian
nonparametrics using Hellinger tests (see e.g. the proof of
Theorem~\ref{thm:GaussianContraction} below) that the surrogate posterior contracts
around $\theta _{*,p}$ at the rate $\tilde{\delta}_{n,p}$, and second,
that it contracts \emph{at the same rate $\delta _{n}$} as the true posterior.
Analogously to \eqref{eq:DncL} define with respect to the surrogate posterior
the event
\begin{align*}
\tilde{\mathcal{D}}_{n}(c,L) &= \bigl\{\tilde{\Pi} \bigl(
\theta \in \mathbb{R}^{p}:\lVert \theta -\theta _{*,p}\rVert
^{\beta }>L \delta _{n}\big|Z^{(n)} \bigr)> e^{-cn\delta _{n}^{2}}
\bigr\}.
\end{align*}

\begin{prop}
\label{prop:surrogateContraction}%
Under Assumptions \ref{assu:a1}-\ref{assu:a3}, for any $c>0$ and any sufficiently
large $L>0$ there exist $\tilde{c}_{1},\tilde{c}_{2}>0$ with
\begin{equation*}
\mathbb{P}_{\theta _{0}}^{n} \bigl(\tilde{\mathcal{D}}_{n}(c,L)
\bigr)\leq \tilde{c}_{2}e^{-\tilde{c}_{1}n\delta _{n}^{2}}.
\end{equation*}
\end{prop}

The proof is preceded by two lemmas.

\begin{lem}
\label{lem:surrogate_stability}%
Grant Assumptions \ref{assu:a1} and \ref{assu:a2}, and suppose that
$\theta \in \mathbb{R}^{p}$ satisfies
$\lVert \theta -\theta _{*,p}\rVert >(4Cc_{\max}/c_{\min})
\tilde{\delta}_{n,p}$ for $C\geq 1$. Then, on the event
$\mathcal{E}$, we have
$$\tilde{\ell}_{n}(\theta )-\tilde{\ell}_{n}(\theta _{*,p})<-(4C^{2}c_{
\max}^{2}/c_{\min})n\delta _{n}^{2}.$$
\end{lem}

\begin{proof}
The first part of Theorem~\ref{thm:surrogate}(i) gives
$\nabla \tilde{\ell}_{n}(\theta _{*,p})=\nabla \ell _{n}(\theta _{*,p})$,
while the strong concavity in its second part and the Cauchy--Schwarz inequality
as well as the upper bound on the gradient in Assumption~\ref{assu:a1}(ii) imply
\begin{align*}
\tilde{\ell}_{n}(\theta )-\tilde{\ell}_{n}(\theta
_{*,p}) & \leq c_{
\max}np^{\kappa _{1}}\delta _{n}
\lVert \theta -\theta _{*,p}\rVert -(m/2) \lVert \theta -\theta
_{*,p}\rVert ^{2}.
\end{align*}
If
$\lVert \theta -\theta _{*,p}\rVert >(4Cc_{\max}/c_{\min})
\tilde{\delta}_{n,p}$ and $C\geq 1$, then
$(4c_{\max}/c_{\min})\delta _{n}p^{\kappa _{1}+\kappa _{3}}\lVert
\theta -\theta _{*,p}\rVert <\lVert \theta -\theta _{*,p}\rVert ^{2}$ and
therefore
\begin{equation*}
\tilde{\ell}_{n}(\theta )-\ell _{n}(\theta
_{*,p}) <(m/4)\lVert \theta -\theta _{*,p}\rVert
^{2}-(m/2)\lVert \theta -\theta _{*,p} \rVert
^{2}<-\bigl(4C^{2}c_{\max}^{2}/c_{\min}
\bigr)n\delta _{n}^{2}.
\end{equation*}\qedhere
\end{proof}

\begin{lem}%
\label{lem:randomNormFactors}
Let $3\eta /8\geq L^{1/\beta}\tilde{\delta}_{n,p}$. On the event
$\mathcal{D}^{c}_{n}(c,L)$, the random normalising factors
\begin{align*}
p_{n} = \frac{\int _{\Theta}e^{\ell _{n}(\theta )}\pi (\theta )\,d\theta}{\int _{\Theta}e^{\tilde{\ell}_{n}(\theta )}\pi (\theta )\,d\theta}
\end{align*}
satisfy $p_{n}<(1-e^{-cn\delta _{n}^{2}})$,
$-e^{-cn\delta _{n}^{2}}(1-e^{-cn\delta _{n}^{2}})^{-1}<1-p_{n}$, while
on the event $\tilde{\mathcal{D}}^{c}_{n}(c,L)$ they satisfy
$1-p_{n}< e^{-cn\delta _{n}^{2}}$.
\end{lem}

\begin{proof}
Set
$U=\{\theta \in \mathbb{R}^{p}:\lVert \theta -\theta _{*,p}\rVert ^{
\beta}\leq L\delta _{n}\}$. For
$3\eta /8\geq L^{1/\beta}\tilde{\delta}_{n,p}$, we have
$U\subset \tilde{\mathcal{B}}$. So, by Theorem~\ref{thm:surrogate}(i) on
the event $\mathcal{D}^{c}_{n}(c,L)$,
\begin{align*}
p_{n}^{-1}\geq p_{n}^{-1}\tilde{\Pi}
\bigl(U|Z^{(n)}\bigr) & =\Pi \bigl(U|Z^{(n)}\bigr)>
1-e^{-cn
\delta _{n}^{2}}.
\end{align*}
This is equivalent to
$-e^{-cn\delta _{n}^{2}}(1-e^{-cn\delta _{n}^{2}})^{-1}< 1-p_{n}$, while
on the event $\tilde{\mathcal{D}}^{c}_{n}(c,L)$
\begin{equation*}
p_{n}\geq p_{n}\Pi \bigl(U|Z^{(n)}\bigr) =\tilde{
\Pi}\bigl(U|Z^{(n)}\bigr)> 1-e^{-cn
\delta _{n}^{2}}.
\end{equation*}\qedhere
\end{proof}

\begin{proof}[Proof of Proposition~\ref{prop:surrogateContraction}]
Fix $C_{3}$ from Assumption~\ref{assu:a3}, and for $c,L>0$ set
$C=L^{1/\beta}c_{\operatorname{min}}/(4c_{\operatorname{max}})$ as well
as $\tilde{C}=4C^{2}c^{2}_{\max}/c_{\min}-C_{3}-c$. Suppose that
$L$ is sufficiently large such that $C\geq 1$, $\tilde{C}>0$. By the lower
bound on $\eta $ in Assumption~\ref{assu:a1} and increasing the constant
$\tilde{c}_{2}$ we can assume that $n$ is large enough to ensure
$3\eta /8\geq L^{1/\beta}\tilde{\delta}_{n,p}$.

On the event $\mathcal{E}\cap \bar{\mathcal{D}}^{c}_{n}(C_{3})$, for
$\lVert \theta -\theta _{*,p}\rVert > L^{1/\beta}\tilde{\delta}_{n,p}=(4Cc_{
\max}/c_{\min})\tilde{\delta}_{n,p}$ Lemma~\ref{lem:surrogate_stability} shows
\begin{align}
\begin{aligned}
\tilde{\pi}\bigl(\theta |Z^{(n)}\bigr)&=
\frac{e^{\tilde{\ell}_{n}(\theta )-\ell _{n}(\theta _{0})}\pi (\theta )}{\int _{\Theta}e^{\tilde{\ell}_{n}(\theta )-\ell _{n}(\theta _{0})}\pi (\theta )\,d\theta} \leq
\frac{e^{\tilde{\ell}_{n}(\theta )-\ell _{n}(\theta _{0})}\pi (\theta )}{\int _{\lVert \theta -\theta _{*,p}\rVert \leq \delta _{n}}e^{\ell _{n}(\theta )-\ell _{n}(\theta _{0})}\pi (\theta )\,d\theta}
\\
&\leq  e^{C_{3}n\delta _{n}^{2}}e^{\tilde{\ell}_{n}(\theta )-\ell _{n}(
\theta _{0})}\pi (\theta )\leq e^{-(\tilde{C}+c)n\delta _{n}^{2}}e^{
\tilde{\ell}_{n}(\theta _{*,p})-\ell _{n}(\theta _{0})}
\pi (\theta ).
\end{aligned}
\label{eq:surrogateConcent_upperBound}
\end{align}
Since $\theta _{*,p}\in \tilde{\mathcal{B}}$, we have
$\tilde{\ell}_{n}(\theta _{*,p})=\ell _{n}(\theta _{*,p})$, and so
$$\mathbb{E}_{\theta _{0}}^{n}[e^{\tilde{\ell}_{n}(\theta _{*,p})-
\ell _{n}(\theta _{0})}]=\mathbb{E}^{n}_{\theta _{*,p}}\mathbf{1}(e^{-
\ell _{n}(\theta _{0})}\neq 0)\leq 1.$$ The previous display and the Markov
inequality therefore show
\begin{align}
\begin{aligned}
& \mathbb{P}_{\theta _{0}}^{n} \bigl(\tilde{\Pi}\bigl(\theta \in
\mathbb{R}^{p}:\lVert \theta -\theta _{*,p}\rVert
^{\beta}> L \tilde{\delta}^{\beta}_{n,p}|Z^{(n)}
\bigr)>e^{-cn\delta _{n}^{2}}/2, \mathcal{E}\cap \bar{\mathcal{D}}_{n}(C_{3})
\bigr)
\\
& \quad \leq \mathbb{P}_{\theta _{0}}^{n} \biggl(e^{-\tilde{C}n\delta _{n}^{2}}e^{
\tilde{\ell}_{n}(\theta _{*,p})-\ell _{n}(\theta _{0})}
\int _{\lVert
\theta -\theta _{*,p}\rVert ^{\beta}> L\tilde{\delta}^{\beta}_{n,p}} \pi (\theta )\,d\theta >1/2 \biggr)
\\
& \quad \leq 2e^{-\tilde{C}n\delta _{n}^{2}}\mathbb{E}_{\theta _{0}}^{n}
\bigl[e^{
\tilde{\ell}_{n}(\theta _{*,p})-\ell _{n}(\theta _{0})}\bigr]\leq 2e^{-
\tilde{C}n\delta _{n}^{2}}.
\end{aligned}
\label{eq:normalisationUpperBound}
\end{align}
By Assumptions \ref{assu:a1}, \ref{assu:a3}, $\mathcal{E}$ and
$\bar{\mathcal{D}}_{n}(C_{3})$ are high
$\mathbb{P}^{n}_{\theta _{0}}$-probability events, so
\begin{align*}
\mathbb{P}_{\theta _{0}}^{n} \bigl(\tilde{\mathcal{D}}_{n}(c,L)
\bigr) &\leq \mathbb{P}_{\theta _{0}}^{n} \bigl(\tilde{\Pi}\bigl(
\theta \in \mathbb{R}^{p}:L\delta _{n}<\lVert \theta -\theta
_{*,p}\rVert ^{
\beta}\leq L\tilde{\delta}^{\beta}_{n,p}|Z^{(n)}
\bigr)>e^{-cn\delta _{n}^{2}}/2 \bigr)
\\
&\quad{} + \mathbb{P}_{\theta _{0}}^{n} \bigl(\tilde{\Pi}\bigl(\theta
\in \mathbb{R}^{p}:\lVert \theta -\theta _{*,p}\rVert
^{\beta}> L \tilde{\delta}^{\beta}_{n,p}|Z^{(n)}
\bigr)>e^{-cn\delta _{n}^{2}}/2 \bigr)
\\
& \leq \mathbb{P}_{\theta _{0}}^{n} \bigl(\tilde{\Pi}\bigl(
\theta \in \mathbb{R}^{p}:L\delta _{n}<\lVert \theta -\theta
_{*,p}\rVert ^{
\beta}\leq L\tilde{\delta}^{\beta}_{n,p}|Z^{(n)}
\bigr)>e^{-cn\delta _{n}^{2}}/2 \bigr) \\
& \quad + c_{2}e^{-c_{1}n\delta _{n}^{2}}+C_{2}'e^{-C_{2}n\delta _{n}^{2}}.
\end{align*}
To conclude the proof, on the event $\mathcal{D}^{c}_{n}(c,L)$ we can apply
Lemma~\ref{lem:randomNormFactors}. Replacing $\tilde{\Pi}$ with
$\Pi $ up to the random normalising factors $p_{n}$, and upper bounding
these and the event in question we find
\begin{align*}
&\tilde{\Pi}\bigl(\theta \in \mathbb{R}^{p}:L\delta _{n}<
\lVert \theta - \theta _{*,p}\rVert ^{\beta}\leq L\tilde{
\delta}^{\beta}_{n,p}|Z^{(n)}\bigr)\\
&\quad  \leq
\bigl(1-e^{-cn\delta _{n}^{2}}\bigr)^{-1} \Pi \bigl(\theta \in
\mathbb{R}^{p}: \lVert \theta -\theta _{*,p}\rVert
^{\beta }> L\delta _{n}|Z^{(n)}\bigr).
\end{align*}
By increasing $\tilde{c}_{2}$ one more time, we ensure that for all large
enough $n$ also $1\geq 3e^{-cn\delta _{n}^{2}}$, or equivalently
$(1-e^{-cn\delta _{n}^{2}})e^{-cn\delta _{n}^{2}}/2\geq e^{-2cn
\delta _{n}^{2}}$. So,
\begin{align*}
\mathbb{P}_{\theta _{0}}^{n} \bigl(\tilde{\mathcal{D}}_{n}(c,L)
\bigr) \leq \mathbb{P}^{n}_{\theta _{0}}\bigl(\mathcal{D}_{n}(2c,L)
\bigr)+ \mathbb{P}^{n}_{\theta _{0}}\bigl(\mathcal{D}_{n}(c,L)
\bigr) + c_{2}e^{-c_{1}n
\delta _{n}^{2}}+C_{2}'e^{-C_{2}n\delta _{n}^{2}}.
\end{align*}
Possibly increasing $L$ to apply Assumption~\ref{assu:a3} with $c$ and
$2c$, the claim follows.
\end{proof}

With this preparation we proceed to the proof of the Wasserstein approximation.

\begin{thm}
\label{thm:surrogateWasserstein}%
Under Assumptions \ref{assu:a1}-\ref{assu:a4} there exist an event
$\tilde{\mathcal{E}}$ and constants $c_{1},c_{2}>0$ with
$\mathbb{P}_{\theta _{0}}^{n}(\tilde{\mathcal{E}})\geq 1-c_{2}e^{-c_{1}n
\delta _{n}^{2}}$ such that
$W_{2}^{2}  (\tilde{\Pi}(\cdot |Z^{(n)}),\Pi (\cdot |Z^{(n)})
  )\leq 3e^{-n\delta _{n}^{2}}$ on $\tilde{\mathcal{E}}$.
\end{thm}

\begin{proof}
Fix $C_{3}$ from Assumption~\ref{assu:a3}. For $c,L>0$ to be determined
later set
$\tilde{\mathcal{E}}=\mathcal{E}\cap \mathcal{D}^{c}_{n}(c,L)
\cap \tilde{\mathcal{D}}^{c}_{n}(c,L)\cap \bar{\mathcal{D}}^{c}_{n}(C_{3})$.
We begin by applying Theorem~6.15 of Villani (2009) to upper bound the
squared Wasserstein distance between the posterior and the surrogate posterior
as
\begin{align*}
W_{2}^{2}\bigl(\tilde{\Pi}\bigl(\cdot |Z^{(n)}
\bigr),\Pi \bigl(\cdot |Z^{(n)}\bigr)\bigr) & \leq 2
\int _{\mathbb{R}^{p}}\lVert \theta -\theta _{*,p}\rVert
^{2} \bigl\llvert \tilde{\pi}\bigl(\theta |Z^{(n)}\bigr)-\pi
\bigl(\theta |Z^{(n)}\bigr) \bigr\rrvert \,d\theta .
\end{align*}
Decompose the integral as
$\mathcal{I}_{1}+\mathcal{I}_{2}+\mathcal{I}_{3}$ with
\begin{align*}
\mathcal{I}_{1} & =
\int _{\lVert \theta -\theta _{*,p}\rVert ^{\beta}
\leq L\tilde{\delta}^{\beta}_{n,p}}\lVert \theta -\theta _{*,p} \rVert
^{2} \bigl\llvert \tilde{\pi}\bigl(\theta |Z^{(n)}\bigr)-\pi
\bigl(\theta |Z^{(n)}\bigr) \bigr\rrvert \,d\theta ,
\\
\mathcal{I}_{2} & =
\int _{\lVert \theta -\theta _{*,p}\rVert ^{\beta}>
L\tilde{\delta}^{\beta}_{n,p}}\lVert \theta -\theta _{*,p}\rVert
^{2} \tilde{\pi}\bigl(\theta |Z^{(n)}\bigr)\,d\theta ,
\\
\mathcal{I}_{3} & =
\int _{\lVert \theta -\theta _{*,p}\rVert ^{\beta}>
L\tilde{\delta}^{\beta}_{n,p}}\lVert \theta -\theta _{*,p}\rVert
^{2} \pi \bigl(\theta |Z^{(n)}\bigr)\,d\theta .
\end{align*}
Taking $L$ sufficiently large, by Assumptions \ref{assu:a1},
\ref{assu:a3} and Proposition~\ref{prop:surrogateContraction} the event
$\tilde{\mathcal{E}}$ is of high $\mathbb{P}_{\theta _{0}}^{n}$-probability.
Using a union bound, it is therefore enough to show that each of terms
$\mathcal{I}_{1}$ to $\mathcal{I}_{3}$ exceeds
$e^{-n\delta _{n}^{2}}$ on $\tilde{\mathcal{E}}$ only with exponentially
small $\mathbb{P}_{\theta _{0}}^{n}$-probability.

By increasing the constant $c_{2}$ and recalling the lower bound on
$\eta $ from Assumption~\ref{assu:a1} we can assume that $n$ is large enough
to ensure $3\eta /8\geq L^{1/\beta}\tilde{\delta}_{n,p}$. So, if
$\lVert \theta -\theta _{*,p}\rVert ^{\beta}\leq L\tilde{\delta}^{
\beta}_{n,p}$, then
$\lVert \theta -\theta _{*,p}\rVert ^{2}\leq L^{2/\beta}
\tilde{\delta}^{2}_{n,p}\leq 1$ and
$p_{n}\pi (\theta |Z^{(n)})=\tilde{\pi}(\theta |Z^{(n)})$ with the random
normalising factors $p_{n}$, using that the log-likelihood and the surrogate
log-likelihood coincide for $\theta \in \tilde{\mathcal{B}}$ by Theorem~\ref{thm:surrogate}(i). This means on $\tilde{\mathcal{E}}$
\begin{align*}
\mathcal{I}_{1} & \leq L^{2/\beta}\tilde{\delta}_{n,p}^{2}
\int _{
\lVert \theta -\theta _{*,p}\rVert ^{\beta}\leq L\tilde{\delta}^{
\beta}_{n,p}} \bigl\llvert \tilde{\pi}\bigl(\theta
|Z^{(n)}\bigr)-\pi \bigl(\theta |Z^{(n)}\bigr) \bigr\rrvert d
\theta
\\
& = L^{2/\beta}\tilde{\delta}_{n,p}^{2} \llvert
1-p_{n} \rrvert \Pi \bigl(\lVert \theta - \theta _{*,p}
\rVert ^{\beta }\leq L\tilde{\delta}^{\beta}_{n,p}|Z^{(n)}
\bigr) \leq \llvert 1-p_{n} \rrvert .
\end{align*}
Suppose for the rest of the proof that
$c\geq \max (2,2\log 2/(n\delta ^{2}))$. Then
$e^{-(c/2)n\delta _{n}^{2}}\leq 1/2$ and thus
$e^{-cn\delta _{n}^{2}}(1-e^{-cn\delta _{n}^{2}})^{-1}\leq e^{-(c/2)n
\delta _{n}^{2}}$. Consequently,
$\mathcal{I}_{1}\leq e^{-(c/2)n\delta _{n}^{2}}$ by Lemma~\ref{lem:randomNormFactors}. On the other hand, we find from the Cauchy--Schwarz
inequality and (\ref{eq:surrogateConcent_upperBound}), with
$\tilde{C}=\tilde{C}(L)$ increasing in $L$ and $L$ large enough, that
\begin{align*}
\mathcal{I}_{2}^{2} & \leq \tilde{\Pi}\bigl(\lVert \theta -
\theta _{*,p} \rVert ^{\beta}> L\tilde{\delta}^{\beta}_{n,p}|Z^{(n)}
\bigr)
\int _{\lVert
\theta -\theta _{*,p}\rVert ^{\beta}> L\tilde{\delta}^{\beta}_{n,p}} \lVert \theta -\theta _{*,p}\rVert
^{4}\tilde{\pi}\bigl(\theta |Z^{(n)}\bigr)d \theta
\\
& \leq e^{-(\tilde{C}+c)n\delta _{n}^{2}}e^{\tilde{\ell}_{n}(\theta _{*,p})-
\ell _{n}(\theta _{0})}
\int _{\Theta}\lVert \theta -\theta _{*,p} \rVert
^{4}\pi (\theta )\,d\theta .
\end{align*}
By Assumption~\ref{assu:a1} we have
$\lVert \theta _{*,p}\rVert \leq c_{0}$, and by Assumption~\ref{assu:a4} the prior has uniformly bounded fourth moments, so the integral
above is bounded by a constant $C>0$ uniformly in $n$. We infer from the
Markov inequality, Fubini's theorem as well as
\eqref{eq:normalisationUpperBound} that
\begin{align*}
& \mathbb{P}_{\theta _{0}}^{n} \bigl(\mathcal{I}_{2}>e^{-n\delta _{n}^{2}},
\tilde{\mathcal{E}} \bigr)\leq Ce^{-(\tilde{C}+c-2)n\delta _{n}^{2}} \mathbb{E}_{\theta _{0}}^{n}
\bigl[e^{\tilde{\ell}_{n}(\theta _{*,p})-\ell _{n}(
\theta _{0})}\bigr]\leq Ce^{-\tilde{C}n\delta _{n}^{2}}.
\end{align*}
Since
$\pi (\theta |Z^{(n)})=p_{n}^{-1}\tilde{\pi}(\theta |Z^{(n)})\leq (1-e^{-cn
\delta _{n}^{2}})^{-1}\tilde{\pi}(\theta |Z^{(n)})\leq 2\tilde{\pi}(
\theta |Z^{(n)})$, this argument gives the same upper bound relative to
$\mathcal{I}_{3}$ with the larger constant $2C$. This finishes the proof.
\end{proof}

\begin{rem}[comparison with \citep{nickl2020a}]%
\label{rem:whyDifferent}
The proof that the surrogate posterior approximates the true posterior
well in Wasserstein distance generalises the specific argument of Theorem~4.14
in \citep{nickl2020a}, which relies on a version of Lemma~\ref{lem:surrogate_stability}
with $\theta _{*,p}$ replaced by the MAP
estimator. The analysis of the MAP estimator is based on
\cite{vandegeer2001}. The latter seems not to generalise to the statistical
models of Section~\ref{sec:Applications}.
\end{rem}

\subsection{Sampling guarantees relative to the surrogate posterior}
\label{subsec:Polynomial-time-sampling}

A standard MCMC approach for sampling from the Gibbs-type measure
\eqref{eq:surrogatePosterior} is the
\emph{unadjusted Langevin algorithm }\citep{roberts1996}. With the initialiser
$\vartheta _{0}=\theta _{\operatorname{init}}\in \mathbb{R}^{p}$ from Assumption~\ref{assu:a2} as input, a step size $\gamma >0$ and independent $p$-dimensional
Gaussian innovations $\xi _{k}\sim N(0,I_{p\times p})$, it produces a Markov
chain with iterates $\vartheta _{k}\in \mathbb{R}^{p}$, where
\begin{align}
\tilde \vartheta_{k+1} & = \tilde\vartheta _{k}+\gamma
\nabla \log \tilde{\pi}\bigl(\tilde \vartheta_{k} |Z^{(n)}\bigr)+\sqrt{2\gamma}\xi _{k+1}\nonumber\\
& = \tilde\vartheta _{k}+\gamma
\bigl(\nabla \tilde\ell _{n}(\tilde\vartheta _{k})+\nabla \log \pi (
\tilde\vartheta _{k}) \bigr)+\sqrt{2\gamma}\xi _{k+1}.
\label{eq:surrogateLangevinChain} 
\end{align}

Suppose that the gradient $\nabla \tilde{\ell}_{n}$ can be evaluated at
a cost depending at most polynomially on $n$, $p$. Combining the Wasserstein
approximation in Theorem~\ref{thm:surrogateWasserstein} with standard nonasymptotic
sampling bounds for strongly log-concave potentials with Lipschitz-gradients,
we quantify the distance of the law
$\mathcal{L}(\tilde{\vartheta}_{k})$ to the true posterior measure in terms
of the number of steps $k$, the sample size $n$ and a sufficiently small
step size $\gamma $.
\begin{thm}
\label{thm:MCMC_surrogate_Wasserstein}%
Let $(\tilde{\vartheta}_{k})_{k\geq 1}$ be the Markov chain with iterates
\eqref{eq:surrogateLangevinChain}. Suppose
$\gamma \leq 2/(m+\Lambda )$ and set
\begin{align*}
A=4\bigl(9\eta /8 + (2\eta +4c_{0})\Lambda _{\pi}/m
\bigr)^{2}+4p/m,\qquad B( \gamma )=36\gamma p\Lambda ^{2}/m^{2}+12
\gamma ^{2}p\Lambda ^{4}/m^{3}.
\end{align*}
Under Assumptions \ref{assu:a1}-\ref{assu:a4} we have on the event
$\tilde{\mathcal{E}}$
\begin{equation*}
W_{2}^{2} \bigl(\mathcal{L}(\tilde{\vartheta}_{k}),
\Pi \bigl(\cdot |Z^{(n)}\bigr) \bigr)\leq 6e^{-n\delta _{n}^{2}}+A(1-m\gamma
/2)^{k}+B(\gamma ), \quad k\geq 1.
\end{equation*}
\end{thm}

As the Langevin algorithm \eqref{eq:surrogateLangevinChain} is unadjusted,
the law $\mathcal{L}(\tilde\vartheta _{k})$ is asymptotically biased as
$k\rightarrow \infty $ and convergence to $\Pi (\cdot |Z^{(n)})$ is achieved
only if $\gamma \rightarrow 0$ and $n\rightarrow \infty $. All constants
in the theorem are fully explicit. Importantly, the result implies that
the mixing time $k_{\operatorname{mix}}$ to reach a target accuracy
$\varepsilon >0$ such that
\begin{align*}
W_{2} \bigl(\mathcal{L}(\tilde{\vartheta}_{k}),\Pi \bigl(
\cdot |Z^{(n)}\bigr) \bigr)\leq \varepsilon ,\quad k\geq
k_{\operatorname{mix}},
\end{align*}
depends at most polynomially on $n$, $p$, $\gamma $ and
$\varepsilon $.

In order to prove the theorem observe first the following crude upper bound
on the Euclidean distance between the initialiser and the mode of the surrogate
posterior.
\begin{lem}
\label{lem:thetaMax}%
Grant Assumptions \ref{assu:a1}--\ref{assu:a4}. Let
$\theta _{\operatorname{max}}\in \mathbb{R}^{p}$ be the unique maximiser
of $\tilde{\pi}(\cdot |Z^{(n)})$. Then on the event $\mathcal{E}$
\begin{equation*}
\lVert \theta _{\operatorname{init}}-\theta _{\operatorname{max}} \rVert \leq 9\eta /8 + (2
\eta +4c_{0})\Lambda _{\pi}/m.
\end{equation*}
\end{lem}

\begin{proof}
Let $\tilde{\theta}_{\operatorname{max}}$ be the unique maximiser of the
strongly concave map $\tilde{\ell}_{n}$. By Assumption~\ref{assu:a2}
\begin{equation}
\lVert \theta _{\operatorname{init}}-\theta _{\operatorname{max}} \rVert \leq \eta /8+
\lVert \theta _{*,p}-\tilde{\theta }_{
\operatorname{max}}\rVert +\lVert
\tilde{\theta }_{\operatorname{max}}- \theta _{\operatorname{max}}\rVert . \label{eq:theta_max_1}
\end{equation}
If
$\lVert \tilde{\theta }_{\operatorname{max}}-\theta _{*,p}\rVert >(4c_{
\operatorname{max}}/c_{\operatorname{min}})\tilde{\delta}_{n,p}$, then
$\tilde{\ell}_{n}(\tilde{\theta}_{\operatorname{max}})-\tilde{\ell}_{n}(
\theta _{*,p})<0$ by Lemma~\ref{lem:surrogate_stability} on the event
$\mathcal{E}$. Since $\tilde{\theta}_{\operatorname{max}}$ is the unique
maximiser of $\tilde{\ell}_{n}$, this means necessarily
$\lVert \tilde{\theta }_{\operatorname{max}}-\theta _{*,p}\rVert
\leq (4c_{\operatorname{max}}/c_{\operatorname{min}})\tilde{\delta}_{n,p}
\leq \eta $. This already yields the claim by \eqref{eq:theta_max_1} if
$\tilde{\theta}_{\operatorname{max}}=\theta _{\operatorname{max}}$. Suppose
now
$\tilde{\theta}_{\operatorname{max}}\neq \theta _{\operatorname{max}}$,
and let $\theta _{\pi ,\max}$ be the unique maximiser of the prior density.
Since $\log \pi $ is concave and has $\Lambda _{\pi}$-Lipschitz gradients,
we get
\begin{align*}
\log \pi (\theta _{\operatorname{max}})-\log \pi (\tilde{\theta}_{
\operatorname{max}})
&\leq \nabla \log \pi (\tilde{\theta}_{
\operatorname{max}})^{\top}(\theta
_{\operatorname{max}}- \tilde{\theta}_{\operatorname{max}})
\\
& = \bigl(\nabla \log \pi (\tilde{\theta}_{\operatorname{max}})- \nabla \log
\pi (\theta _{\pi ,\operatorname{max}}) \bigr)^{\top}( \theta _{\operatorname{max}}-
\tilde{\theta}_{\operatorname{max}})
\\
& \leq \Lambda _{\pi}\lVert \theta _{\operatorname{max}}- \tilde{
\theta }_{\operatorname{max}}\rVert \lVert \tilde{\theta }_{
\operatorname{max}}-\theta
_{\pi ,\operatorname{max}}\rVert ,
\end{align*}
and therefore
\begin{align*}
0 & \leq \tilde{\ell}_{n}(\tilde{\theta}_{\operatorname{max}})- \tilde{
\ell}_{n}(\theta _{\operatorname{max}}) =\log \tilde{\pi}\bigl( \tilde{
\theta}_{\operatorname{max}}|Z^{(n)}\bigr)-\log \tilde{\pi}\bigl(\theta
_{
\operatorname{max}}|Z^{(n)}\bigr)+\log \pi (\theta _{\operatorname{max}})- \log
\pi (\tilde{\theta}_{\operatorname{max}})
\\
& \leq -\frac{m}{2}\lVert \tilde{\theta }_{\operatorname{max}}- \theta
_{\operatorname{max}}\rVert ^{2}+\Lambda _{\pi}\lVert \theta
_{
\operatorname{max}}-\tilde{\theta }_{\operatorname{max}}\rVert \lVert \tilde{\theta
}_{\operatorname{max}}-\theta _{\pi ,
\operatorname{max}}\rVert .
\end{align*}
Hence,
$\lVert \tilde{\theta }_{\operatorname{max}}-\theta _{
\operatorname{max}}\rVert \leq 2\lVert \tilde{\theta }_{
\operatorname{max}}-\theta _{\pi ,\operatorname{max}}\rVert \Lambda _{
\pi}/m$, and we conclude by \eqref{eq:theta_max_1} and
$\lVert \theta _{*,p}\rVert ,\lVert \theta _{\pi ,\operatorname{max}}
\rVert \leq c_{0}$, noting that
\begin{equation*}
\lVert \tilde{\theta }_{\operatorname{max}}-\theta _{\pi ,
\operatorname{max}}\rVert \leq
(4c_{\operatorname{max}}/c_{
\operatorname{min}})\tilde{\delta}_{n,p}+\lVert
\theta _{*,p}-\theta _{
\pi ,\operatorname{max}}\rVert \leq \eta +
2c_{0}.
\end{equation*}\qedhere
\end{proof}

\begin{proof}[Proof of Theorem~\ref{thm:MCMC_surrogate_Wasserstein}]
Apply Theorem~5 of \citep{durmus2019} (in the form stated as Proposition
A.4 in \citep{nickl2020a}) to the strongly log-concave measure
$\mu =\tilde{\Pi}(\cdot |Z^{(n)})$ from Theorem~\ref{thm:surrogate} with
unique maximiser $\theta _{\operatorname{max}}$ such that
\begin{equation*}
W_{2}^{2}\bigl(\mathcal{L}(\tilde{\vartheta}_{k}),
\tilde{\Pi}\bigl(\cdot |Z^{(n)}\bigr)\bigr) \leq 2(1-m\gamma
/2)^{k} \bigl(\lVert \theta _{\operatorname{init}}- \theta _{\operatorname{max}}
\rVert ^{2}+p/m \bigr)+B(\gamma )/2, \quad k\geq 0.
\end{equation*}
The claim follows from the triangle inequality for the Wasserstein distance,
Theorem~\ref{thm:surrogateWasserstein} and Lemma~\ref{lem:thetaMax}.
\end{proof}

Let the law of the Markov chain be denoted by $\mathbf{P}$. Then the mixing
time in Wasserstein distance immediately yields convergence guarantees
with high $\mathbb{P}_{\theta _{0}}^{n}\times \mathbf{P}$-probability for
the approximation of posterior functionals by ergodic averages of the Markov
chain.

\begin{thm}
\label{thm:MCMC_surrogate_functionals}%
Suppose $\gamma \leq 2/(m+\Lambda )$,
$\varepsilon \geq \sqrt{48e^{-n\delta _{n}^{2}}+8B(\gamma )}$ and consider
as burn-in time
\begin{equation*}
J_{\operatorname{in}} \geq \frac{2}{m\gamma} \biggl\llvert \log \biggl(
\frac{\varepsilon ^{2}}{8A} \biggr) \biggr\rrvert .
\end{equation*}
Under Assumptions \ref{assu:a1}--\ref{assu:a4} there exists a constant
$c_{F}>0$ such that for all Lipschitz functions
$h:\mathbb{R}^{p}\rightarrow \mathbb{R}$ with
$\lVert h\rVert _{\mathrm{Lip}}=1$ and all $J\geq 1$ on the event
$\tilde{\mathcal{E}}$
\begin{align*}
\mathbf{P} \Biggl( \Biggl\llvert \frac{1}{J}\sum
_{k=1+J_{\operatorname{in}}}^{J+J_{
\operatorname{in}}}h(\tilde{\vartheta}_{k})-
\int _{\Theta}h(\theta ) \pi \bigl(\theta |Z^{(n)}\bigr)d
\theta \Biggr\rrvert >\varepsilon \Biggr) & \leq 2 \exp \biggl(-c_{F}
\frac{\varepsilon ^{2}m^{2}J\gamma}{1+1/(mJ\gamma )} \biggr).
\end{align*}
\end{thm}

\begin{proof}
For $k\geq J_{\operatorname{in}}$,
$A(1-m\gamma /2)^{k}\leq \varepsilon ^{2}/8$, hence
$W_{2}^{2}(\mathcal{L}(\tilde{\vartheta}_{k}),\Pi (\cdot |Z^{(n)}))
\leq \varepsilon ^{2}/4$ by Theorem~\ref{thm:MCMC_surrogate_Wasserstein}.
The claim follows now from the proof
of \citep[Theorem~3.8]{nickl2020a}.
\end{proof}

\subsection{Sampling guarantees for vanilla Langevin MCMC}
\label{subsec:Polynomial-time-sampling-true-posterior}

We study next the question if the guarantees in Theorem~\ref{thm:MCMC_surrogate_functionals}
hold also with respect to the vanilla
Langevin Markov chain $(\vartheta _{k})_{k\geq 0}$ with iterates
\begin{align}
\vartheta _{k+1} & =\vartheta _{k}+\gamma \bigl(\nabla \ell
_{n}( \vartheta _{k})+\nabla \log \pi (\vartheta _{k})
\bigr)+\sqrt{2\gamma}\xi _{k+1}, \quad \vartheta _{0}\equiv
\theta _{\operatorname{init}}. \label{eq:langevin} 
\end{align}
Since both $(\vartheta _{k})_{k\geq 0}$ and
$(\tilde\vartheta _{k})_{k\geq 0}$ start from
$\theta _{\operatorname{init}}\in \tilde{\mathcal{B}}$, we have
$\tilde{\vartheta}_{k}=\vartheta _{k}$ according to Theorem~\ref{thm:surrogate}(i)
as long as $\vartheta _{k}$ has not exited yet from
the region of local curvature $\tilde{\mathcal{B}}$. Note that
$\tilde{\vartheta}_{k}$ will leave from $\tilde{\mathcal{B}}$
\emph{eventually} due to the Gaussian innovations. We will prove, however,
that this takes in average exponentially in $n$ many steps
$J_{\operatorname{out}}\gg J_{\operatorname{in}}$. The main idea is to
relate the discrete time Markov chain
$(\tilde{\vartheta}_{k})_{k\geq 0}$ to a continuous time Langevin diffusion
process with gradient potential
$\nabla \log \tilde{\pi}(\cdot |Z^{(n)})$. This reduces the problem of
computing the exit time of $(\tilde{\vartheta}_{k})_{k\geq 0}$ from
$\tilde{\mathcal{B}}$ to the corresponding exit time of the diffusion process.
The latter is achieved analytically by comparing to a suitable Ornstein--Uhlenbeck
process.
\begin{thm}
\label{thm:exitTime}%
Let
$\tau =\inf \{k\geq 1:\tilde{\vartheta}_{k}\notin \tilde{\mathcal{B}}
\}$ denote the first time the Markov chain
$(\tilde{\vartheta}_{k})_{k\geq 0}$ leaves from the region
$\tilde{\mathcal{B}}$. Grant Assumptions \ref{assu:a1}--\ref{assu:a4} and
suppose that $\gamma \leq m/(\sqrt{54}\Lambda ^{2})$,
$2c_{0}\Lambda _{\pi}\leq \eta m/32$. Then there exist
$c_{1},c_{2},c_{3},c_{4}>0$ such that on the event $\mathcal{E}$ for all
$J\geq 1$
\begin{equation*}
\mathbf{P} (\tau \leq J )\leq \frac{128Jp^{5/2}}{\eta \sqrt{m}}\exp \biggl(- \frac{\eta ^{2}m}{1024p}
\biggr).
\end{equation*}
\end{thm}

\begin{proof}
By extending the probability space carrying the Markov chain we can further
assume without loss of generality that it also supports a $p$-dimensional
Brownian motion $(W_{t})_{t\geq 0}$ with respect to a filtration
$(\mathcal{F}_{t})_{t\geq 0}$ satisfying the usual conditions (see
\citep[Section~5.2.A]{karatzas1998}). Set
$f(\theta )=-\log \tilde{\pi}(\cdot |Z^{(n)})$ and consider the two
$p$-dimensional stochastic differential equations
\begin{align}
dL_{t} =-\nabla f(L_{t})\,dt+\sqrt{2}\,dW_{t},\qquad
d\bar{L}_{t} =-\nabla f( \bar{L}_{\lfloor t/\gamma \rfloor \gamma})\,dt+
\sqrt{2}\,dW_{t}, \label{eq:LangevinSDE} 
\end{align}
for $t\geq 0$, both starting at
$L_{0}=\bar{L}_{0}=\theta _{\operatorname{init}}$. By Theorem~\ref{thm:surrogate} the function $f$ is strongly convex and has Lipschitz
gradients on $\mathcal{E}$, so classical results for stochastic differential
equations (e.g., \citep[Theorem~5.2.9]{karatzas1998}) verify that the first
SDE has a unique strong solution $(L_{t})_{t\geq 0}$ with respect to the
filtration $(\mathcal{F}_{t})_{t\geq 0}$. The process
$(\bar{L}_{t})_{t\geq 0}$ is the continuous time interpolation of
$(\tilde{\vartheta}_{k})_{k\geq 0}$ in the sense that
$ \mathcal{L}(\bar{L}_{\gamma},\dots ,\bar{L}_{J\gamma})=\mathcal{L}(
\vartheta _{1},\dots ,\vartheta _{J})$. Hence,
\begin{align*}
& \mathbf{P} (\tau \leq J )=\mathbf{P} \Bigl(\sup_{k=1,
\dots ,J}\lVert
\bar{L}_{k\gamma }-\theta _{*,p}\rVert >3\eta /8 \Bigr).
\end{align*}
We study this probability by comparing $(\bar L_{t})_{t\geq 0}$ to
$(L_{t})_{t\geq 0}$. We do this in three steps.

\emph{Step~1.} Recall that $p$-dimensional Brownian motion for
$p\geq 2$ does not hit points $\mathbf{P}$-almost surely
\citep[Theorem~18.6]{kallenberg2002}. By Girsanov's theorem this also holds
for the diffusion process $L$ on any finite time interval. We can therefore
apply It\^{o}'s formula to the function
$\theta \mapsto \lVert \theta -\theta _{*,p}\rVert $ (which is only non-smooth
at the point $\theta _{*,p}$) such that
\begin{equation*}
\lVert L_{t}-\theta _{*,p}\rVert =
\int _{0}^{t} \biggl\{ - \frac{L_{s}-\theta _{*,p}}{\lVert L_{s}-\theta _{*,p}\rVert }\cdot
\nabla f(L_{s})+\frac{1}{2} \frac{p-1}{\lVert L_{s}-\theta _{*,p}\rVert } \biggr\} \,ds+
\sqrt{2} \tilde{W}_{t},
\end{equation*}
where
$\tilde{W}_{t}=\int _{0}^{t}(L_{s}-\theta _{*,p})\lVert L_{s}-\theta _{*,p}
\rVert ^{-1}\,dW_{s}$ is a scalar Brownian motion by L\'{e}vy's characterisation
of Brownian motion. The strong convexity of $f$ implies
\begin{align*}
(\theta -\theta _{*,p})\cdot \nabla f(\theta ) & \geq (\theta - \theta
_{*,p})\cdot \nabla f(\theta _{*,p})+(m/2)\lVert \theta -
\theta _{*,p}\rVert ^{2}
\\
& =(m/2) (\theta -\theta _{*,p})\cdot (\theta -\bar{\theta}),\qquad
\bar{\theta}:=\theta _{*,p}+(2/m)\nabla f(\theta _{*,p}).
\end{align*}
Let $(V_{t})_{t\geq 0}$ be a $p$-dimensional Ornstein--Uhlenbeck process
satisfying
\begin{align*}
dV_{t} & =-(m/2) (V_{t}-\bar{\theta})\,dt+\sqrt{2}d
\tilde{W}_{t},\qquad V_{0}= \theta _{\operatorname{init}}.
\end{align*}
By a comparison argument for scalar It\^{o} processes
\citep{ikeda1977} we get
%
\begin{equation}
\lVert L_{t}-\theta _{*,p}\rVert \leq \lVert
V_{t}-\theta _{*,p} \rVert \quad \mathbf{P}\text{-almost
surely for all }t\geq 0. \label{eq:comparison} 
\end{equation}
The process $(V_{t})_{t\geq 0}$ has the explicit solution
\begin{align*}
V_{t} & =\theta _{\operatorname{init}}e^{-(m/2)t}+\bar{\theta}
\bigl(1-e^{-(m/2)t}\bigr)+ \sqrt{2/m}\tilde{W}_{1-e^{-mt}}
\\
& =\theta _{*,p}+(\theta _{\operatorname{init}}-\theta _{*,p})e^{-(m/2)t}+(2/m)
\nabla f(\theta _{*,p}) \bigl(1-e^{-(m/2)t}\bigr)+\sqrt{2/m}
\tilde{W}_{1-e^{-mt}}.
\end{align*}
Since
$\lVert \theta _{*,p}\rVert ,\lVert \theta _{\pi ,\operatorname{max}}
\rVert \leq c_{0}$, we have
$\lVert \nabla \log \pi (\theta _{*,p})\rVert =\lVert \nabla \log
\pi (\theta _{*,p})\rVert -\lVert \nabla \log \pi (\theta _{\pi ,
\operatorname{max}})\rVert \leq 2c_{0}\Lambda _{\pi}$. Assumption~\ref{assu:a1} as well as
$m\geq c_{\operatorname{min}}np^{-\kappa _{3}}$ imply
$\lVert \nabla \ell _{n}(\theta _{*,p})\rVert \leq c_{\max}n\delta _{n}p^{
\kappa _{1}} \leq \eta m/(4\log (n+e^{8}))\leq \eta m/32$. Hence, using
the condition on $\Lambda _{\pi}$ we find
$\lVert \nabla f(\theta _{*,p})\rVert \leq \lVert \nabla \ell _{n}(
\theta _{*,p})\rVert +\lVert \nabla \log \pi (\theta _{*,p})\rVert
\leq \eta m/16$. Since also
$\lVert \theta _{\operatorname{init}}-\theta _{*,p}\rVert \leq \eta /8$
by Assumption~\ref{assu:a2} and because $ac+a(1-c)=a$ for
$a,c\in \mathbb{R}$ we have
\begin{equation*}
\lVert \theta _{\operatorname{init}}-\theta _{*,p}\rVert
e^{-(m/2)t}+(2/m) \bigl\lVert \nabla f(\theta _{*,p})\bigr\rVert
\bigl(1-e^{-(m/2)t}\bigr)\leq \eta /8.
\end{equation*}
With this conclude from (\ref{eq:comparison})
\begin{align*}
 \mathbf{P} \Bigl(\sup_{0\leq t\leq J\gamma}\lVert L_{t}-\theta
_{*,p} \rVert >3\eta /16 \Bigr)
&\leq \mathbf{P} \Bigl(\sup
_{0\leq t\leq J
\gamma}\lVert \sqrt{2/m}\tilde{W}_{1-e^{-mt}}\rVert >\eta
/16 \Bigr)\\
& \leq p\mathbf{P} \Bigl(\sup_{0\leq s\leq t} \llvert
\tilde{W}_{1,s} \rrvert >b \Bigr)
\end{align*}
where $t=1-e^{-mJ\gamma}$, $b=\sqrt{m/(2p)}\eta /16$, and where
$\tilde{W}_{1,s}$ is a scalar Brownian motion. We can now apply a well-known
result on the exit time of a scalar Brownian motion from an interval, cf.
\citep[Remark~2.8.3]{karatzas1998}, which gives
\begin{align*}
\mathbf{P} \Bigl(\sup_{0\leq t\leq J\gamma}\lVert L_{t}-\theta
_{*,p} \rVert >3\eta /16 \Bigr) & \leq p\bigl(\sqrt{2t}/(b\sqrt{\pi})
\bigr)e^{-b^{2}/(2t)}\\
& \leq \frac{32p^{3/2}}{\eta \sqrt{m}}\exp \biggl(- \frac{\eta ^{2}m}{1024p(1-e^{-mJ\gamma})}
\biggr).
\end{align*}

\emph{Step~2.} We apply Lemma~22 of \citep{durmus2019} (their equation (51))
to the strongly convex function $f$ and
$\kappa =(2m\Lambda )/(m+\Lambda )$, $\varepsilon =\kappa /4$ such that
for all $k\geq 1$
\begin{align*}
\lVert L_{k\gamma }-\bar{L}_{k\gamma }\rVert ^{2} & \leq (1-
\gamma \kappa /2)\lVert L_{(k-1)\gamma }-\bar{L}_{(k-1)\gamma }\rVert
^{2}
\\
&\quad{}  +(\gamma +2/\kappa )
\int _{(k-1)\gamma}^{k\gamma}\bigl\lVert \nabla
f(L_{s})- \nabla f(L_{(k-1)\gamma })\bigr\rVert ^{2}\,ds.
\end{align*}
Since $L_{0}=\bar{L}_{0}$, this yields inductively
\begin{align*}
\lVert L_{k\gamma }-\bar{L}_{k\gamma }\rVert ^{2} & \leq (
\gamma +2/ \kappa )\sum_{i=1}^{k}(1-\gamma
\kappa /2)^{k-i}
\int _{(i-1)\gamma}^{i
\gamma}\bigl\lVert \nabla
f(L_{s})-\nabla f(L_{(i-1)\gamma })\bigr\rVert ^{2}\,ds.
\end{align*}
Now, $\nabla f$ is $\Lambda $-Lipschitz, and so using
$\gamma \leq m/(\sqrt{54}\Lambda ^{2})\leq \Lambda ^{-1}$,
$\kappa \geq m$ and recalling that $L$ solves the first SDE in
\eqref{eq:LangevinSDE}, we have with
$V=\sup_{0\leq t\leq J\gamma}\lVert W_{t}-W_{\lfloor t/\gamma
\rfloor \gamma }\rVert $ that
\begin{align*}
\lVert L_{k\gamma }-\bar{L}_{k\gamma }\rVert &\leq \sqrt{2\bigl(
\gamma / \kappa +2/\kappa ^{2}\bigr)}\Lambda \sup_{0\leq t\leq k\gamma}
\lVert L_{t}-L_{
\lfloor t/\gamma \rfloor \gamma }\rVert
\\
&  \leq (\sqrt{6}\Lambda /m)\sup_{0\leq t\leq k\gamma} \biggl(
\int _{\lfloor t/\gamma \rfloor \gamma}^{t}\bigl\lVert \nabla
f(L_{s}) \bigr\rVert \,ds+\sqrt{2}\lVert W_{t}-W_{\lfloor t/\gamma \rfloor \gamma }
\rVert \biggr)
\\
&  \leq (\sqrt{6}\Lambda /m)\sup_{0\leq t\leq k\gamma} \biggl(
\int _{\lfloor t/\gamma \rfloor \gamma}^{t}\bigl\lVert \nabla
f(L_{s})- \nabla f(\theta _{*,p})\bigr\rVert \,ds+\gamma
\bigl\lVert \nabla f(\theta _{*,p}) \bigr\rVert \biggr)\\
&\quad +(\sqrt{12}
\Lambda /m)V
\\
&  \leq \bigl(\sqrt{6}\Lambda ^{2}\gamma /m\bigr)\sup
_{0\leq t\leq J
\gamma}\lVert L_{t}-\theta _{*,p}\rVert +
\sqrt{6}\Lambda \gamma \eta /8+\bigl( \sqrt{12}\Lambda \gamma ^{1/2}/m
\bigr)\gamma ^{-1/2}V
\\
& \leq (1/3)\sup_{0\leq t\leq J\gamma}\lVert L_{t}-\theta
_{*,p} \rVert +\eta /16+(2/m)^{1/2}\gamma ^{-1/2}V,
\end{align*}
because $\lVert \nabla f(\theta _{*,p})\rVert \leq \eta m/8$ by Step~1
and using $\gamma \leq m/(\sqrt{54}\Lambda ^{2})$ in the last line. Observe
that
\begin{align*}
V & \overset{d} {=}\gamma ^{1/2}\sup_{0\leq t\leq J}\lVert
W_{t}-W_{
\lfloor t\rfloor }\rVert \leq \gamma ^{1/2}p^{1/2}
\max_{1\leq i\leq p,1
\leq k\leq J}\sup_{k-1\leq t\leq k} \llvert
W_{i,t}-W_{i,\lfloor t
\rfloor} \rrvert ,
\end{align*}
as well as
$(W_{i,t}-W_{i,\lfloor t\rfloor})_{k-1\leq t\leq k}\overset{d}{=}(
\tilde{W}_{1,t})_{0\leq t\leq 1}$. Consequently,
\begin{align*}
&\mathbf{P} \Bigl(\sup_{k=1,\dots ,J}\lVert L_{k\gamma }
- \bar{L}_{k\gamma }\rVert >3\eta /16 \Bigr)\\
& \quad \leq \mathbf{P} \Bigl((1/3)
\sup_{0\leq t\leq J\gamma}\lVert L_{t}-\theta _{*,p}\rVert
+(2/m)^{-1/2} \gamma ^{-1/2}V>\eta /8 \Bigr)
\\
& \quad \leq \mathbf{P} \Bigl(\sup_{0\leq t\leq J\gamma}\lVert L_{t}-
\theta _{*,p}\rVert >3\eta /16 \Bigr)+Jp\mathbf{P} \Bigl(\sup
_{0
\leq s\leq 1} \llvert \tilde{W}_{1,s} \rrvert >b \Bigr),
\end{align*}
with $b$ as in Step~1.\vadjust{\goodbreak}

\emph{Step~3.} Conclude by
$\lVert \bar{L}_{k\gamma }-\theta _{*,p}\rVert \leq \lVert L_{k
\gamma }-\theta _{*,p}\rVert +\lVert L_{k\gamma }-\bar{L}_{k\gamma }
\rVert $, the first two steps and $1-e^{-mJ\gamma}\leq 1$.
\end{proof}

Combining the last two theorems gives the following important result.

\begin{cor}
\label{cor:surrogateTruePosterior}%
Let $(\vartheta _{k})_{k\geq 1}$ be the Markov chain with iterates
\eqref{eq:langevin}. Let $\varepsilon $, $J_{\operatorname{in}}$ be as
in Theorem~\ref{thm:MCMC_surrogate_functionals} and suppose that
$\gamma \leq m/(\sqrt{54}\Lambda ^{2})$,
$2c_{0}\Lambda _{\pi}\leq \eta m/32$,
$1\leq n\delta _{n}^{2}\leq \eta ^{2}m/p$. Under Assumptions
\ref{assu:a1}-\ref{assu:a4} there exist $c'_{F},c_{1},c_{2}>0$ and an event
$\bar{\mathcal{E}}$ with
$\mathbb{P}_{\theta _{0}}^{n}(\bar{\mathcal{E}})\geq 1-c_{2}e^{-c_{1}n
\delta _{n}^{2}}$ such that for all Lipschitz functions
$h:\mathbb{R}^{p}\rightarrow \mathbb{R}$ with
$\lVert h\rVert _{\mathrm{Lip}}=1$ and all
$J\geq (2m\gamma )^{-1}$
\begin{align*}
& \mathbf{P} \Biggl( \Biggl\llvert \frac{1}{J}\sum
_{k=1+J_{\operatorname{in}}}^{J+J_{
\operatorname{in}}}h(\vartheta _{k})-
\int _{\Theta}h(\theta )\pi \bigl( \theta |Z^{(n)}\bigr)d
\theta \Biggr\rrvert >\varepsilon \Biggr)\\
&\quad \leq 2\exp \bigl(-c'_{F}
\varepsilon ^{2}m^{2}J\gamma \bigr)+128(J+J_{
\operatorname{in}})p^{2}
\exp \biggl(-\frac{n\delta _{n}^{2}}{1024} \biggr).
\end{align*}
\end{cor}

\begin{proof}
If $A$ is the event whose probability we want to upper bound, then
$\mathbf{P}(A)\leq \mathbf{P}(A,\tau >J+J_{\operatorname{in}})+
\mathbf{P}(\tau \leq J+J_{\operatorname{in}})$. Since
$\tilde{\vartheta}_{k}=\vartheta _{k}$ for all
$k\leq J+J_{\operatorname{in}}<\tau $, the result follows immediately from
Theorems \ref{thm:MCMC_surrogate_functionals} and \ref{thm:exitTime}, noting
$mJ\gamma \geq 1/2$.\break\mbox{}
\end{proof}

As long as we make at most
$(J+J_{\operatorname{in}})p^{2}\leq \exp (n\delta _{n}^{2}/1025)$ many
steps along the Markov chain $(\vartheta _{k})_{k\geq 1}$ and if
$\varepsilon ^{2}m^{2}J\gamma \geq n\delta _{n}^{2}$, then this result
shows that we can compute posterior functionals up to a precision
$\varepsilon $ with high probability \emph{jointly} under
$\mathbb{P}_{\theta _{0}}^{n}\times \mathbf{P}$ using the vanilla Langevin
algorithm. Assuming that in practice we only have a computational budget
of iterations $J\asymp n^{\rho}$, $\rho >0$, depending polynomially on
$n$, we see that the convexification by the surrogate log-likelihood is
not necessary, at least when the step size is small enough and given the
model configuration as in the Corollary.

In the final result of this section we recover the ground truth
$\theta _{0}$ through an ergodic average of
$(\vartheta _{k})_{k\geq 0}$ with high
$\mathbb{P}_{\theta _{0}}^{n}\times \mathbf{P}$-probability by taking
$f(\theta )=\theta $ in the previous Corollary and assuming that
$\theta _{*,p}$ is close to $\theta _{0}$ up to a bias
$\lVert \theta _{0}-\theta _{*,p}\rVert ^{\beta}\leq C\delta _{n}$. Due
to the presence of this bias it is reasonable to restrict to the sampling
to target precision levels $\varepsilon \geq c\delta _{n}$.

\begin{cor}
\label{cor:PosteriorMean}%
Grant the assumptions of Corollary~\ref{cor:surrogateTruePosterior} assume
in addition
$\lVert \theta _{0}-\theta _{*,p}\rVert ^{\beta}\leq C\delta _{n}$ for
$C>0$. Let
$\varepsilon \geq \max (c\delta _{n}^{1/\beta},p^{1/2}\sqrt{48e^{-n
\delta _{n}^{2}}+8B(\gamma )})$ for some large enough $c>0$. Then there
exist $c,c_{1},c_{2},c_{3},c_{4}>0$ such that on an event
$\bar{\mathcal{E}}$ with
$\mathbb{P}_{\theta _{0}}^{n}(\bar{\mathcal{E}})\geq 1-c_{1}e^{-c_{2}n
\delta _{n}^{2}}$ for all $J\geq (2m\gamma )^{-1}$
\begin{align*}
& \mathbf{P} \Biggl(\Biggl\lVert \frac{1}{J}\sum
_{k=1+J_{\operatorname{in}}}^{J+J_{
\operatorname{in}}}\vartheta _{k}-\theta
_{0}\Biggr\rVert >2\varepsilon \Biggr)\leq 2\exp
\bigl(-c'_{F}\varepsilon ^{2}m^{2}J
\gamma /p \bigr)+128(J+J_{\operatorname{in}})p^{3}\exp \biggl(-
\frac{n\delta _{n}^{2}}{1024} \biggr).
\end{align*}
\end{cor}

\begin{proof}
We first demonstrate that the posterior mean is close to the ground truth
with high $\mathbb{P}^{n}_{\theta _{0}}$-probability. The proof adapts
arguments from \citep{monard2021a} to our setting. The Jensen inequality
shows for $L>0$
\begin{align*}
\biggl\lVert
\int _{\Theta }\theta \pi \bigl(\theta |Z^{(n)}\bigr)d
\theta -\theta _{0} \biggr\rVert &\leq
\int _{\Theta}\lVert \theta -\theta _{0}\rVert \pi
\bigl( \theta |Z^{(n)}\bigr)\,d\theta
\\
& \leq
\int _{\lVert \theta -\theta _{*,p}\rVert ^{\beta}> L\delta _{n}} \lVert \theta -\theta _{*,p}\rVert \pi
\bigl(\theta |Z^{(n)}\bigr)\,d\theta +L^{1/
\beta}\delta
_{n}^{1/\beta}+\lVert \theta _{*,p}-\theta
_{0}\rVert .
\end{align*}
Arguing as in the proof of Theorem~\ref{thm:surrogateWasserstein} we find
for large enough $L$ that the first term exceeds
$e^{-n\delta _{n}^{2}}$ only with exponentially small
$\mathbb{P}^{n}_{\theta _{0}}$-probability. Together with the bias condition
we thus find
$\lVert \int _{\Theta }\theta \,d\pi (\theta |Z^{(n)})\,d\theta -\theta _{0}
\rVert \leq c'\delta _{n}^{1/\beta}$ with high
$\mathbb{P}^{n}_{\theta _{0}}$-probability for some $c'>0$. Hence, we get
for large enough $c$ with the coordinate functions $h_{i}(x)=x_{i}$
\begin{align*}
&\mathbf{P} \Biggl(\Biggl\lVert \frac{1}{J}\sum
_{k=1+J_{\operatorname{in}}}^{J+J_{
\operatorname{in}}}\vartheta _{k}-\theta
_{0}\Biggr\rVert >2\varepsilon \Biggr) 
\leq \mathbf{P} \Biggl(\Biggl
\lVert \frac{1}{J}\sum_{k=1+J_{
\operatorname{in}}}^{J+J_{\operatorname{in}}}
\vartheta _{k}-
\int _{
\Theta }\theta \pi \bigl(\theta |Z^{(n)}\bigr)d
\theta \Biggr\rVert >\varepsilon \Biggr)
\\
 &\quad  \leq p\max_{i=1,\dots ,p}\mathbf{P} \Biggl(\frac{1}{J}
\sum_{k=1+J_{
\operatorname{in}}}^{J+J_{\operatorname{in}}}h_{i}(\vartheta
_{k})-
\int _{\Theta}h_{i}(\theta )\pi \bigl(\theta
|Z^{(n)}\bigr)\,d\theta > \varepsilon /p^{1/2} \Biggr),
\end{align*}
Conclude by Corollary~\ref{cor:surrogateTruePosterior}.
\end{proof}

\section{Applications}
\label{sec:Applications}

We next apply the general sampling results to several important statistical
models. For clarity, we will make a number of simplifying assumptions throughout,
and restrict to the vanilla Langevin algorithm \eqref{eq:langevin}. Our
focus lies on exhibiting sampling guarantees scaling polynomially in
$n$ and $p$, and we therefore avoid optimising constants.

Let us first fix some notation. For $\alpha \geq 0$ introduce the
$\ell ^{2}(\mathbb{N})$-Sobolev spaces
\begin{equation*}
h^{\alpha}(\mathbb{N})= \Biggl\{ \theta \in \ell ^{2}(
\mathbb{N}): \lVert \theta \rVert _{\alpha}^{2}=\sum
_{k=1}^{\infty}k^{2\alpha} \theta
_{k}^{2}<\infty \Biggr\}.
\end{equation*}
In the following, let $\mathcal{X}$ be an open and bounded subset of
$\mathbb{R}^{d}$ and $\nu _{X}$ is a finite measure. We always identify
$\theta \in \ell ^{2}(\mathbb{N})$ with the function
$\Phi (\theta )=\sum_{k\geq 1}\theta _{k} e_{k}$ with respect to an orthonormal
basis $(e_{k})_{k\geq 1}$ of $L^{2}(\mathcal{X})$. We choose for
$(e_{k})_{k\geq 1}$ specifically the eigenbasis of the Dirichlet Laplacian
(cf. \cite[Section~5A]{taylormichael2010} or
\cite[Chapter~6.5]{evans2010}). With this choice, the spaces
$h^{\alpha}(\mathbb{N})$ are naturally isomorphic to
$L^{2}(\mathcal{X})$-Sobolev spaces (see Section~\ref{subsec:Sobolev} for
details), thus simplifying some proofs. For example, when
$\mathcal{X}=(0,1)$, then we obtain the standard Fourier basis
$e_{k}(x)=\sin (k\pi x)$
\citep[Section~7.7.1]{pinchover2005introduction}.

Suppose that the observations $Z_{i}$ are i.i.d. and that they are generated
according to $\mathbb{P}_{\theta _{0}}^{n}$ for a parameter
$\theta _{0}\in h^{\alpha}(\mathbb{N})$ and some $\alpha $. We always assume
\begin{align*}
\delta _{n}=n^{-\alpha /(2\alpha +1)},\qquad \theta _{*,p}=(\theta
_{0,1}, \dots ,\theta _{0,p}),
\end{align*}
so $\delta _{n}$ is the \emph{usual} statistical minimax rate of convergence
for $\alpha $-regular signals \cite{tsybakov2008}. Note that
$n\delta _{n}^{2}=n^{1/(2\alpha +1)}$. A~popular `sieve' prior distribution
puts independent scalar Gaussian priors with decreasing variances on the
first $p$ coefficients $\theta \in \mathbb{R}^{p}$,
\begin{equation}
\theta \sim \Pi \equiv \Pi _{n}=N\bigl(0,n^{-1/(2\alpha +1)}\Sigma
_{
\alpha}^{-1}\bigr),\qquad \Sigma _{\alpha}=
\operatorname{diag}\bigl(1,2^{2\alpha}, \dots ,p^{2\alpha}\bigr).
\label{eq:SievePrior} 
\end{equation}
The results below extend naturally to more general series-type prior distributions
(such as in \cite[Section~11.4.5]{ghosal2017} or \cite{agapiou2021a}) and
basis functions $(e_{k})_{k\geq 1}$. The data dependent rescaling of the
prior ensures in the proof of Theorem~\ref{thm:GaussianContraction} that
posterior draws concentrate with high probability on balls
$\{\theta \in \mathbb{R}^{p}:\lVert \theta \rVert _{\alpha}\leq r\}$ for
some fixed radius. This technique has been essential in many recent results
for nonlinear Bayesian inverse to prove posterior contraction based on
stability bounds for the forward operator such as in
Assumption~\ref{assu:GLM_local_reg}(iv) below. In the next sections we apply this
technique for the first time to density estimation and GLMs. Differently
from common assumptions in the literature (e.g.,
\cite[(2.6)]{ghosal1997}), this allows for a-priori unbounded parameter
spaces.

We first state two results that will allow us to verify the high-probability
growth conditions on the log-likelihood function and the posterior contraction
in Assumptions \ref{assu:a1} and \ref{assu:a3} from moment and stability
bounds. The proof of the first theorem can be found in Section~\ref{subsec:Sufficient-moment-conditions} and is based on the classical
Bernstein inequality (see, e.g., Proposition~3.1.8 in
\citep{gine2016}) and a chaining argument for empirical processes with
mixed tails, cf. Theorem~3.5 of \citep{dirksen2015a}. Here, we denote by
$\ell (\theta )=\log p_{\theta}(Z_{i})$ the log-likelihood function for
a single observation.

\begin{thm}
\label{thm:localisation}%
Assume there exist $C>0$, $C_{1}\geq C_{2}>0$,
$\kappa _{1},\kappa _{2},\kappa _{3},\kappa _{4},n_{0}\geq 0$ such that
the following conditions hold for all $n\geq n_{0}$ and all
$v\in \mathbb{R}^{p}$ with $\lVert v\rVert =1$:
\begin{enumerate}[label=(\roman*)]
\item[{(i)}] (local regularity)
$\theta \mapsto \ell (\theta )\in C^{2}(\mathcal{B})$,
$\mathbb{P}_{\theta _{0}}$-almost surely.
\item[{(ii)}] (local mean boundedness) $|\mathbb{E}_{\theta _{0}}v^{\top}
\nabla \ell (\theta _{*,p})|\leq C_{1}\delta _{n}p^{\kappa _{1}}$ and for
all $q\geq 2$
\begin{align*}
\mathbb{E}_{\theta _{0}} \bigl\llvert v^{\top}\nabla \ell (\theta
_{*,p}) \bigr\rrvert ^{q} &\leq (q!/2)C_{1}^{q}p^{2\kappa _{1}+(q-2)\kappa _{2}},
\\
\sup_{\theta \in \mathcal{B}}\mathbb{E}_{\theta _{0}} \bigl\llvert
v^{\top} \nabla ^{2}\ell (\theta )v \bigr\rrvert
^{q}&\leq (q!/2)C_{1}^{q}p^{2\kappa _{2}+(q-2)
\kappa _{4}},
\\
\sup_{\theta ,\theta '\in \mathcal{B}}\mathbb{E}_{\theta _{0}} \bigl\llvert
v^{\top}\nabla ^{2}\bigl(\ell (\theta )-\ell \bigl(\theta
'\bigr)\bigr)v \bigr\rrvert ^{q}&\leq (q!/2)
\bigl(C_{1}p^{\kappa _{4}}\bigl\lVert \theta -\theta '
\bigr\rVert \bigr)^{q}.
\end{align*}
\item[{(iii)}] (local mean curvature)
$\inf_{\theta \in \mathcal{B}}\lambda _{\min}  (\mathbb{E}_{
\theta _{0}}  [-\nabla ^{2}\ell (\theta )  ]  )\geq C_{2}p^{-
\kappa _{3}}$.
\item[{(iv)}] (growth conditions) $p\leq Cn\delta _{n}^{2}$,
$\max (\delta _{n}p^{\kappa _{2}},\eta \delta _{n}p^{\kappa _{4}},
\delta _{n}^{2}p^{\kappa _{4}})\log n\leq p^{-\kappa _{3}}$.
\end{enumerate}
Then Assumption~\ref{assu:a1} holds with
$c_{\operatorname{min}}=C_{2}/2$,
$c_{\operatorname{max}}=\max (2C_{1}\sqrt{8C},C_{2}/2)+C_{1}$ for some
event $\mathcal{E}$, provided the lower bound on $\eta $ stated there is
satisfied.
\end{thm}

The second theorem (with a proof in Section~\ref{sec:proofGaussContraction}) generalises
\citep[Theorem~7.3.1]{gine2016} and \citep[Theorem~13]{giordano2020b},
using Bernstein-type moment conditions, entropy bounds and a stability
condition for the Hellinger distance. Define for
$\theta ,\theta '\in \Theta $ the squared Hellinger distance
\begin{align*}
h^{2}\bigl(\theta ,\theta '\bigr) & =
\int _{\mathcal{Z}} \bigl(\sqrt{p_{\theta}(z)}-
\sqrt{p_{\theta '}(z)} \bigr)^{2}\,d\nu (z).
\end{align*}

\begin{thm}
\label{thm:GaussianContraction}%
Let $\theta _{0}\in h^{\alpha}(\mathbb{N})$ with
$\lVert \theta _{0}\rVert _{\alpha}\leq c_{0}$ for $\alpha >1/2$,
$c_{0}>0$. Let $\Pi (\cdot |Z^{(n)})$ be the posterior distribution with
the rescaled Gaussian prior $\Pi $ from \eqref{eq:SievePrior}. For
$C>0$ suppose $p\leq Cn^{1/(2\alpha +1)}$,
$\lVert \theta _{0}-\theta _{*,p}\rVert \leq C\delta _{n}$ and set
\begin{equation*}
\mathcal{B}_{n,r}= \bigl\{ \theta \in \mathbb{R}^{p}:\lVert
\theta - \theta _{*,p}\rVert \leq \delta _{n},\lVert \theta
\rVert _{\alpha} \leq r \bigr\} ,\quad r>0.
\end{equation*}
Suppose further that there exists $\beta \geq 1$ such that for any
$r>0$ and some $c_{r}>0$ the following holds:
\begin{enumerate}[label=(\roman*)]
\item[{(i)}] (moment conditions) For all $\theta \in \mathcal{B}_{n,r}$ and all
$q\ge 2$
\begin{equation}
\mathbb{E}_{\theta _{0}}\bigl(\ell (\theta _{0})-\ell (\theta )
\bigr)\leq c_{r} \delta _{n}^{2},\qquad
\mathbb{E}_{\theta _{0}} \bigl\llvert \ell (\theta _{0})- \ell (
\theta ) \bigr\rrvert ^{q}\leq (q!/2)\delta _{n}^{2}c_{r}^{q}.
\label{eq:ApproxCondition} 
\end{equation}
\item[{(ii)}] (stability condition) For all
$\theta ,\theta '\in h^{\alpha}(\mathbb{N})$ with
$\lVert \theta \rVert _{\alpha},\lVert \theta '\rVert _{\alpha}\leq r$,
%
\begin{equation}
c_{r}^{-1}\bigl\lVert \theta -\theta '\bigr
\rVert ^{\beta}\leq h\bigl(\theta , \theta '\bigr)\leq
c_{r}\bigl\lVert \theta -\theta '\bigr\rVert .
\label{eq:hellingerCondition} 
\end{equation}
\end{enumerate}
Then the posterior distribution contracts around $\theta _{*,p}$ at the
rate $\delta _{n}$ and satisfies Assumption~\ref{assu:a3}.
\end{thm}

\begin{rem}[on $p$, $\alpha $ and the bias condition]%
\label{rem:bias}
Let $\theta _{0}\in h^{\alpha '}(\mathbb{N})$ for
$\alpha '\in \mathbb{N}$, so
$\lVert \theta _{0}-\theta _{*,p}\rVert \leq p^{-\alpha '}
\lVert \theta _{0}\rVert _{\alpha '}$. Under the model restriction
$p\leq Cn^{1/(2\alpha +1)}$ this means for a generic
$\theta _{0}\in h^{\alpha}(\mathbb{N})$ that the optimal rate of convergence
$\delta _{n}$ is achieved in Theorem~\ref{thm:GaussianContraction} only
when $p=Cn^{1/(2\alpha +1)}$. If
$\limsup_{n\rightarrow \infty}p/n^{1/(2\alpha +1)}=0$, then the bias condition
$\lVert \theta _{0}-\theta _{*,p}\rVert \leq C\delta _{n}$ is satisfied
for specific $\theta _{0}$ or when
$n^{(\alpha /\alpha ')/(2\alpha +1)}\lesssim p$.
\end{rem}

\subsection{Nonparametric regression}
\label{subsec:Nonparametric-regression}

In this section we translate the abstract Assumptions in Theorems
\ref{thm:localisation} and \ref{thm:GaussianContraction} to nonparametric
regression models with general assumptions on the regression function.
Posterior sampling guarantees will be formulated for concrete regression
models in Sections~\ref{sec:GLM} and \ref{subsec:Darcy's-problem-1}.

As in Example~\ref{exa:GLM} we observe i.i.d. random vectors
$Z_{i}=(Y_{i},X_{i})$, $i=1,\dots ,n$, such that $Y_{i}$ follows conditional
on $X_{i}=x$ are one-parameter exponential family, but this time assume
more generally
\begin{equation*}
g\bigl(\mathbb{E}_{\theta}[Y_{i}|X_{i}]\bigr)=
\mathcal{G}(\theta ) (X_{i})
\end{equation*}
for a link function $g$ and a known \emph{forward operator}
$\mathcal{G}:\Theta \mapsto L^{2}(\mathcal{X})$. Set
$b_{\theta}(x)=(A')^{-1}  (g^{-1}  (\mathcal{G}(\theta )(x)
  )  )$. With $p_{X}$ the $\nu _{\mathcal{X}}$-density of the
$X_{i}$ the log-likelihood function is
\begin{equation}
\ell _{n}(\theta )=\sum_{i=1}^{n}
\bigl(Y_{i}b_{\theta}(X_{i})-A
\bigl(b_{\theta}(X_{i}) \bigr) \bigr)+\sum
_{i=1}^{n}\log p_{X}(X_{i}).
\label{eq:GLM_logLik}
\end{equation}
The observations follow a nonparametric regression model with errors
$\varepsilon _{i}=Y_{i}-g^{-1}(\mathcal{G}(\theta )(X_{i}))$ such
that
\begin{align}
Y_{i}=g^{-1}\bigl(\mathcal{G}(\theta ) (X_{i})
\bigr)+\varepsilon _{i},\qquad \mathbb{E}_{\theta}[\varepsilon
_{i}|X_{i}]=0. \label{eq:nonGaussRegression}
\end{align}
Recall the high-dimensional region
\begin{equation*}
\mathcal{B}= \bigl\{ \theta \in \mathbb{R}^{p}:\lVert \theta -\theta
_{*,p} \rVert \leq \eta \bigr\}.
\end{equation*}
The moment bounds in Theorem~\ref{thm:localisation} correspond to
$L^{\infty}(\mathcal{X})$- and $L^{2}(\mathcal{X})$-bounds for
$\mathcal{G}$ on $\mathcal{B}$, and the curvature lower bound to a lower
bound on the gradient of $\mathcal{G}$, while the stability condition in
Theorem~\ref{thm:GaussianContraction} relies on a local stability bound
and a local Lipschitz condition.
\setcounter{myassumption}{6}
\renewcommand{\themyassumption}{\Alph{myassumption}}
\begin{myassumption}[local regularity and curvature]%
\label{assu:GLM_local_reg}
$\lVert \theta _{0}\rVert \leq c_{0}$ for $c_{0}>0$ and
$\alpha >1/2$.
\begin{enumerate}[label=(\roman*)]
\item[{(i)}] For all $x\in \mathcal{X}$,
$\theta \mapsto \mathcal{G}(\theta )(x)\in C^{2}(\mathcal{B})$.
\item[{(ii)}] There exist $\bar{c}_{\max}\geq 1$, $k_{1},\dots ,k_{4}\geq 0$ such
that for all $\theta \neq \theta '\in \mathcal{B}$:
\begin{align*}
& \bigl\lVert \mathcal{G}(\theta )-\mathcal{G}(\theta _{*,p})\bigr
\rVert _{L^{2}} \leq \bar{c}_{\max}\eta ,
\\
& \bigl\lVert \mathcal{G}(\theta )\bigr\rVert _{L^{\infty}}\leq
\bar{c}_{\max}, \qquad \bigl\lVert \nabla \mathcal{G}(\theta )\bigr\rVert
_{L^{\infty}(
\mathcal{X},\mathbb{R}^{p})}\leq \bar{c}_{\max}p^{k_{1}},
\\
& \bigl\lVert \nabla ^{2}\mathcal{G}(\theta )\bigr\rVert
_{L^{\infty}(
\mathcal{X},\mathbb{R}^{p\times p})}, \qquad
\frac{\lVert \nabla ^{2}\mathcal{G}(\theta )-\nabla ^{2}\mathcal{G}(\theta ')
\rVert _{L^{\infty}(\mathcal{X},\mathbb{R}^{p\times p})}}{\lVert \theta -\theta '\rVert }
\leq \bar{c}_{\max}p^{k_{2}},
\\
& \bigl\lVert \nabla \mathcal{G}(\theta )\bigr\rVert _{L^{2}(\mathcal{X},
\mathbb{R}^{p})}\leq
\bar{c}_{\max}p^{k_{3}},\qquad \bigl\lVert \nabla ^{2}
\mathcal{G}(\theta )\bigr\rVert _{L^{2}(\mathcal{X},\mathbb{R}^{p\times p})}
\leq \bar{c}_{\max}p^{k_{4}}.
\end{align*}
\item[{(iii)}] There exist $\bar c_{\min}>0$, $k_{5}\geq 0$ such that
$\lVert \nabla \mathcal{G}(\theta )^\top v\rVert _{L^{2}}\geq \bar{\mathrm{c}}_{\min}p^{-k_{5}}$
for all $\theta \in \mathcal{B}$, $v\in\mathbb{R}^p$ with $\lVert v\rVert=1$.
\item[{(iv)}] There exists $\beta \geq 1$ such that if $r>0$ and
$\lVert \theta \rVert _{\alpha},\lVert \theta '\rVert _{\alpha}\leq r$,
then for some $\bar c_{r}>0$
\begin{equation*}
\bar c_{r}^{-1}\bigl\lVert \theta -\theta '
\bigr\rVert ^{\beta}\leq \bigl\lVert \mathcal{G}(\theta )-\mathcal{G}\bigl(
\theta '\bigr)\bigr\rVert _{L^{2}}\leq \bar c_{r}
\bigl\lVert \theta -\theta '\bigr\rVert ,\qquad \bigl\lVert
\mathcal{G}(\theta ) \bigr\rVert _{L^{\infty}}\leq \bar c_{r}.
\end{equation*}
\end{enumerate}
\end{myassumption}

\begin{prop}
\label{prop:ExpoSurrogate}%
Suppose the observations arise from a nonparametric regression model with
log-likelihood function \eqref{eq:GLM_logLik}. Let
$g\in C^{3}(\mathcal{I})$, $c_{X}^{-1}\leq p_{X}(x)\leq c_{X}$ for all
$x\in \mathcal{X}$ and some $c_{X}>0$. For $C>0$ suppose
$p\leq Cn^{1/(2\alpha +1)}$,
$\lVert \theta _{0}-\theta _{*,p}\rVert \leq C\delta _{n}$. Moreover, suppose
that the forward operator $\mathcal{G}$ satisfies Assumption~\ref{assu:GLM_local_reg}. Then, under the growth conditions
\begin{align*}
\max \bigl(\delta _{n}p^{\max (k_{1},2k_{3},k_{4})},\delta _{n}\max (
\eta , \delta _{n})p^{\max (3k_{1},k_{1}+k_{2})}\bigr)\log n &\leq p^{-2k_{5}},
\\
\max \bigl(\delta _{n}^{1/\beta},\delta _{n}p^{k_{3}+2k_{5}}
\bigr) \bigl(\log \bigl(n+e^{8}\bigr)\bigr)^{2} &\leq \eta ,
\end{align*}
for $n$ large enough Assumptions \ref{assu:a1} and \ref{assu:a3} are met
with $\kappa _{1}=k_{3}$, $\kappa _{2}=\max (k_{1},2k_{3},k_{4})$,
$\kappa _{3}=2k_{5}$ and some $C_{1},C_{2}>0$.
\end{prop}

A proof can be found in Section~\ref{sec:proofSpecificModels}.

\subsection{Generalised linear models}
\label{sec:GLM}

We return to the situation of Example~\ref{exa:GLM}, so the log-likelihood
is as in \eqref{eq:GLM_logLik} with
$\mathcal{G}(\theta )=\Phi (\theta )=\sum_{k=1}^{\infty}\theta _{k}e_{k}$.
Verifying Assumption~\ref{assu:GLM_local_reg} essentially reduces to finding
a radius $\eta $ such that $\mathcal{G}$ is uniformly bounded on
$\mathcal{B}$. Sufficient conditions are stated in the next lemma.

\begin{lem}
\label{lem:GLM_conditions}%
Suppose the data $Z^{(n)}$ arise in a GLM with smooth link function
$g$, $c_{X}^{-1}\leq p_{X}(x)\leq c_{X}$ for all
$x\in \mathcal{X}$ for some $c_{X}>0$, as well as
$\lVert \theta _{0}\rVert _{\alpha}\leq c_{0}$ for $\alpha >1$,
$c_{0}>0$. Let $p\leq Cn^{1/(2\alpha +1)}$,
$\lVert \theta _{0}-\theta _{*,p}\rVert \leq C\delta _{n}$ for $C>0$ and
$\eta =\iota p^{-\bar\alpha}\leq 1$ for $\iota ,\bar\alpha >0$ with
$\alpha >2\bar\alpha >1$. Then Assumptions \ref{assu:a1} and
\ref{assu:a3} are satisfied for large enough $n$ with
$\kappa _{1}=0$, $\kappa _{2}=\bar\alpha $, $\kappa _{3}=0$ with
$\beta =1$.
\end{lem}

\begin{proof}
It is enough to verify Assumption~\ref{assu:GLM_local_reg} with
$k_{1}=\bar\alpha $, $k_{2}=k_{3}=k_{4}=k_{5}=0$. Taking $n$ large enough
this yields the growth conditions in Proposition~\ref{prop:ExpoSurrogate}. Let $\theta \in \mathcal{B}$ and
$v\in \mathbb{R}^{p}$ with $\lVert v\rVert=1$. Then
$\nabla \mathcal{G}(\theta )^{\top }v=\Phi (v)$, so
$\lVert \nabla \mathcal{G}(\theta )^{
\top }v\rVert_{L^{2}} = 1$. Moreover, by the Sobolev embeddings in
\eqref{eq:Phi_C1}, \eqref{eq:Phi_C1_Rp} we have for
$\bar\alpha >1/2$
\begin{align}
\begin{aligned}
\bigl\lVert \nabla \mathcal{G}(\theta )\bigr\rVert _{L^{\infty}} & =\sup
_{v\in \mathbb{R}^{p}:\lVert v\rVert =1}\bigl\lVert \Phi (v)\bigr\rVert _{L^{
\infty}}\lesssim
p^{\bar\alpha},
\\
\bigl\lVert \mathcal{G}(\theta )\bigr\rVert _{L^{\infty}} & \lesssim \lVert
\theta \rVert _{\bar\alpha}\leq p^{\bar\alpha}\lVert \theta -\theta
_{*,p} \rVert + \lVert \theta _{*,p}\rVert _{\bar\alpha}
\leq \iota+c_{0}.
\end{aligned}
\label{eq:GLM_conditions_1} 
\end{align}
This verifies Assumption~\ref{assu:GLM_local_reg}(ii), (iii) with the
$k_{i}$ specified above. Assumption~\ref{assu:GLM_local_reg}(iv) follows
from
$\lVert \mathcal{G}(\theta )-\mathcal{G}(\theta ')\rVert _{L^{2}} =
\lVert \theta -\theta '\rVert $ with $\beta =1$ for
$\lVert \theta '\rVert _{\alpha}, \lVert \theta \rVert _{\alpha}\leq r$
such that as before
$\lVert \mathcal{G}(\theta )\rVert _{L^{\infty}}\lesssim \lVert
\theta \rVert _{\bar\alpha}\leq r$.
\end{proof}

We prove now the existence of an initialiser
$\theta _{\operatorname{init}}$ satisfying
$\lVert \theta _{\operatorname{init}}-\theta _{*,p}\rVert \leq \eta /8$
with high $\mathbb{P}^{n}_{\theta _{0}}$-probability, which can be computed
by gradient descent in polynomial time (with gradients given by affine
linear transformations). The proof is given in Section~\ref{sec:proofSpecificModels}. It relies on results from M-estimation
\cite{vandegeer2007} and is similar to \cite[Theorem B.6]{nickl2020a},
but technically more involved since the errors $\varepsilon _{i}$ in
\eqref{eq:nonGaussRegression} are generally not sub-Gaussian.

\begin{lem}[initialiser for GLMs]%
\label{lem:GLM_initialiser}
Grant the Assumptions of Lemma~\ref{lem:GLM_conditions}. Then there exist
$\iota >0$ and
$\theta _{\operatorname{init}}=\theta _{\operatorname{init}}(Z^{(n)})$
which is the output of $O((1+\log n)n^{2\alpha /(2\alpha +1)})$ many iterations
of gradient descent such that
\begin{align*}
\mathbb{P}^{n}_{\theta _{0}} \bigl(\lVert \theta _{
\operatorname{init}}-
\theta _{*,p}\rVert > \iota p^{-\bar\alpha} \bigr)\leq
c_{2}e^{-c_{1}n\delta _{n}^{2}}.
\end{align*}
\end{lem}

To formulate explicit sampling guarantees we reduce the key algorithmic
quantities to their dependence on $n$, $p$. By Lemma~\ref{lem:GLM_conditions} and Theorem~\ref{thm:surrogate} with the rescaled
Gaussian prior $\Pi $ from \eqref{eq:SievePrior} we establish for some
$c_{1},c_{2}>0$ the following curvature and Lipschitz constants
\begin{align}
m_{\pi}=p,\qquad \Lambda _{\pi}=p^{2\alpha +1}, \qquad m
\geq c_{1}n, \qquad \Lambda \leq c_{2}np^{\bar\alpha}.
\label{eq:GLM_LipCurv}
\end{align}
For simplicity, consider a sampling accuracy $0<\varepsilon \leq 1$ and choose
specifically the step size
\begin{align}
\gamma _{\varepsilon} = \min \bigl(\varepsilon ^{2}/p^{2\bar\alpha +1},
\varepsilon /\bigl(np^{4\bar\alpha +1}\bigr)\bigr). \label{eq:gamma_GLM} 
\end{align}
With this we obtain polynomial time sampling guarantees for the general
GLMs described in Example~\ref{exa:negative}.

\begin{thm}%
\label{thm:GLM_guarantees}
Let the posterior distribution $\Pi (\cdot |Z^{(n)})$ arise from a GLM
satisfying the assumptions in Lemma~\ref{lem:GLM_conditions} and with the
rescaled Gaussian prior $\Pi $ from \eqref{eq:SievePrior}. Suppose
$p\leq Cn^{1/(2\alpha +1+\bar\alpha )}$,
$\lVert \theta _{0}-\theta _{*,p}\rVert \leq C\delta _{n}$ and
$\lVert \theta _{\operatorname{init}}-\theta _{*,p}\rVert \leq \iota p^{-
\bar\alpha}$. Let $0<\varepsilon \leq 1$ and let
$(\vartheta _{k})_{k\geq 1}$ be the Markov chain with iterates in
\eqref{eq:langevin} with step size $\gamma _{\varepsilon}$ from
\eqref{eq:gamma_GLM}. Suppose for constants $c_{3},c_{4}>0$ that
$J_{\operatorname{in}}\geq c_{3} (1+\log \varepsilon ^{-1})/(n\gamma _{
\varepsilon})$,
$J\geq c_{4}\delta _{n}^{2}/(\varepsilon ^{2}\gamma _{\varepsilon})$,
$128(J+J_{\operatorname{in}})p^{2}\leq \exp (n\delta _{n}^{2}/1025)$.

Then there exist constants $c_{5}$, $c_{6}$, $c_{7}$ such that the following
holds on an event with
$\mathbb{P}_{\theta _{0}}^{n}\times \mathbf{P}$-probability at least
$1-c_{6} \exp   (-c_{5}n\delta _{n}^{2}  )$:
\begin{enumerate}[label=(\roman*)]
\item[{(i)}]
$W_{2}^{2}  (\mathcal{L}(\vartheta _{k}),\Pi (\cdot |Z^{(n)})
  )\leq 6e^{-n\delta _{n}^{2}}+c_{7}\varepsilon ^{2}$, $\quad J_{
\operatorname{in}}\leq k\leq J+J_{\operatorname{in}}$.
\item[{(ii)}] For all Lipschitz functions
$h:\mathbb{R}^{p}\rightarrow \mathbb{R}$ with
$\lVert h\rVert _{\mathrm{Lip}}=1$
\begin{align*}
& \Biggl\llvert \frac{1}{J}\sum_{k=1+J_{\operatorname{in}}}^{J+J_{
\operatorname{in}}}h(
\vartheta _{k})-
\int _{\Theta}h(\theta )\pi \bigl( \theta |Z^{(n)}\bigr)\,d
\theta \Biggr\rrvert \leq \varepsilon .
\end{align*}
\end{enumerate}
\end{thm}

\begin{proof}
As in the proof of Corollary~\ref{cor:surrogateTruePosterior} the upper
bound on $k\leq J+J_{\operatorname{in}}$ means
$\tilde{\vartheta}_{k}=\vartheta _{k}$. By adjusting the constant
$c_{6}$ in the statement it is enough to prove the claim for $n$ large
enough. For $\iota $ and $n$ large enough, we verify from
\eqref{eq:GLM_LipCurv} with $\eta =\iota p^{-\bar\alpha}$ and the upper
bound on $p$ the conditions $2c_{0}\Lambda _{\pi}\leq \eta m/32$,
$1\leq n\delta _{n}^{2}\leq \eta ^{2}m/p$ and $A\lesssim 1$. With step
size $\gamma _{\varepsilon}$ and $k\geq J_{\operatorname{in}}$ we have
$B(\gamma _{\varepsilon})\lesssim \varepsilon ^{2}$, so (i) follows from
Theorem~\ref{thm:MCMC_surrogate_Wasserstein}, and (ii) from Corollary~\ref{cor:surrogateTruePosterior}.
\end{proof}

Combined with Corollary~\ref{cor:PosteriorMean} for $\beta =1$ we can compute
the posterior mean from the Markov chain
$(\vartheta _{k})_{k\geq 1}$ to approximate the target $\theta _{0}$ up
to an error of order $O(\varepsilon )=O(\delta _{n})$. Due to the
$n$-dependent upper bound on $p$, $\delta _{n}$ is generally not the optimal
rate of convergence (cf. Remark~\ref{rem:bias}), but it is the optimal
rate when using instead Theorem~\ref{thm:MCMC_surrogate_functionals} and
the Markov chain $(\tilde{\vartheta}_{k})_{k\geq 1}$ with
$p=Cn^{1/(2\alpha +1)}$.

While Theorem~\ref{thm:GLM_guarantees} applies to GLMs with general (smooth)
link functions, let us compare to the canonical link as in Example~\ref{exa:GLM}, so the log-likelihood function is concave.
\begin{rem}[log-concave likelihood functions]
\label{rem3.8}
In Example~\ref{exa:GLM} with bounded $A''$ we directly verify (e.g., using
the Lipschitz bounds from Step~3 in the proof of Lemma~\ref{lem:GLM_initialiser}) that $f=-\log \pi (\cdot |Z^{(n)})$ satisfies
\eqref{eq:stronglyConvex} with $m_{f}=p$,
$\Lambda _{f}\lesssim np^{2\bar\alpha}$. Take now as step size
\begin{align*}
\tilde{\gamma}_{\varepsilon} = \min \bigl(\varepsilon ^{2}p/
\bigl(np^{4
\bar\alpha}\bigr),\varepsilon p^{2}/\bigl(n^{4}p^{8\bar\alpha}
\bigr)\bigr)\ll \gamma _{
\varepsilon}.
\end{align*}
Applying the results of \citep{durmus2019} as in the proof of Theorem~\ref{thm:MCMC_surrogate_Wasserstein} we get
$W_{2}^{2}  (\mathcal{L}(\vartheta _{k}),\Pi (\cdot |Z^{(n)})
  )\lesssim \varepsilon ^{2}$ for
$k\gtrsim (1+\log \varepsilon ^{-1})/(p\tilde{\gamma}_{
\varepsilon})\gg J_{\operatorname{in}}$. Comparing to Theorem~\ref{thm:GLM_guarantees}, we obtain for Gaussian and logistic regression
up to constants much smaller mixing times by taking into consideration
the local curvature of the log-likelihood function.
\end{rem}

\subsection{Density estimation}
\label{sec3.3}

Suppose that we observe an i.i.d. sample $Z^{(n)}=(X_{i})_{i=1}^{n}$ from
a density $p_{\theta}$ relative to $\nu _{\mathcal{X}}$. We assume the
following standard parametrisation (see e.g., \citep{vandervaart2008})
\begin{equation}
p_{\theta}(x)= \frac{e^{\Phi (\theta )(x)}}{\int _{\mathcal{X}}
e^{\Phi (\theta )(x)}\,d\nu _{\mathcal{X}}(x)}=e^{
\Phi (\theta )(x)-A(\Phi (\theta ))},\quad \theta \in
\Theta ,x\in \mathcal{X}, \label{eq:density_p} 
\end{equation}
where
$A(\Phi (\theta ))=\log \int _{\mathcal{X}}e^{\Phi (\theta )(x)}\,d\nu _{
\mathcal{X}}(x)$. Similar to GLMs with canonical link function in Examples
\ref{exa:GLM}, \ref{exa:negative} the log-likelihood function
\begin{equation*}
\ell _{n}(\theta )=\sum_{i=1}^{n}
\bigl(\Phi (\theta ) (X_{i})-A\bigl( \Phi (\theta )\bigr) \bigr)
\end{equation*}
is strongly concave only on bounded subsets in $\Theta $. As above, we
take $\eta =\iota p^{-\bar\alpha}$ for some $\iota $, $\alpha $. A~suitable
initialiser can be obtained from a similar argument as in Lemma~\ref{lem:GLM_initialiser}.

\begin{thm}
\label{thm:density}
Let the posterior distribution $\Pi (\cdot |Z^{(n)})$ arise according to
\eqref{eq:density_p} with the rescaled Gaussian prior $\Pi $
from~\eqref{eq:SievePrior}. Suppose
$\lVert \theta _{0}\rVert _{\alpha}\leq c_{0}$ for $\alpha >1$,
$c_{0}>0$. Let $p\leq Cn^{1/(2\alpha +1+\bar\alpha )}$,
$\lVert \theta _{0}-\theta _{*,p}\rVert \leq C\delta _{n}$ and
$\lVert \theta _{\operatorname{init}}-\theta _{*,p}\rVert \leq \iota p^{-
\bar\alpha}$ for $\iota $, $\bar\alpha $ with $\alpha >2\bar\alpha >1$.

Then the conclusions of Theorem~\ref{thm:GLM_guarantees} hold true for
$0<\varepsilon \leq 1$ and the Markov chain with iterates
$(\vartheta _{k})_{k\geq 1}$ in \eqref{eq:langevin} and step size
$\gamma _{\varepsilon}$ from \eqref{eq:gamma_GLM}, assuming that there
are constants $c_{3},c_{4}>0$ such that
$J_{\operatorname{in}}\geq c_{3} (1+\log \varepsilon ^{-1})/(n\gamma _{
\varepsilon})$,
$J\geq c_{4}\delta _{n}^{2}/(\varepsilon ^{2}n\gamma _{\varepsilon})$,
$128(J+J_{\operatorname{in}})p^{2}\leq \exp (n\delta _{n}^{2}/1025)$.
\end{thm}

The proof is postponed to Section~\ref{sec:proofSpecificModels}.

\subsection{Nonlinear Bayesian inverse problems}
\label{subsec:Darcy's-problem-1}

Nonlinear Bayesian inverse problems refer to posterior inference on
$\theta $ for data arising from regression problems
\eqref{eq:nonGaussRegression} with (generally) non-linear forward operators
$\mathcal{G}$. For an overview on such problems see
\cite{stuart2010,nickl2022}. Posterior sampling guarantees were obtained
for specific PDEs and Gaussian measurement errors $\varepsilon _{i}$ by
\cite{nickl2022a,bohr2021a}. Our results in Section~\ref{sec:mainResults} extend those to more general exponential family error
distributions, e.g., Poisson.

We consider now another nonlinear inverse problem in detail. To describe
it, in this section we write $\nabla u$ for the gradient with respect to
the space variable and
$\nabla \cdot u=\sum_{i=1}^{d}\partial _{i}u$ for the divergence
operator. For a \emph{source} $g_{1}\in C^{\infty}(\mathcal{X})$,
\emph{boundary} \emph{values}
$g_{2}\in C^{\infty}(\partial \mathcal{X})$ and \emph{conductivity}
$f\in C^{\gamma}(\mathcal{X})$, $\gamma \in \mathbb{N}$, let
$u\equiv u_{f}$ be the solution to the boundary value problem
\begin{equation}
\begin{cases} \mathcal{L}_{f}u=g_{1} &
\operatorname{in } \operatorname{\mathcal{X} ,}
\\
u=g_{2} & \operatorname{on } \operatorname{\partial \mathcal{X}},
\end{cases} %
\label{eq:PDE} 
\end{equation}
with the divergence form differential operator
$\mathcal{L}_{f}u=\nabla \cdot (f\nabla u)$. For strictly positive
$f$ the operator $\mathcal{L}_{f}$ is uniformly elliptic and classical
solutions $u_{f}\in C^{2}(\mathcal{X})$ exist by standard elliptic PDE
theory (e.g., Theorem~6.14 in \citep{gilbarg2001}). Details on the analytical
properties of the PDE \eqref{eq:PDE} relevant to our analysis are collected
in Section~\ref{sec:PDE_facts}.

The function $u_{f}$ typically represents the density of some quantity
and the PDE describes diffusion within $\mathcal{X}$ at equilibrium
\citep{evans2010}. For a fixed $f_{\operatorname{min}}>0$ let
\begin{equation}
f_{\theta}=f_{\operatorname{min}}+\exp \bigl(\Phi (\theta ) \bigr), \quad
\theta \in \ell ^{2}(\mathbb{N}), \label{eq:f_theta} 
\end{equation}
and consider the measurement model in Section~\ref{subsec:Nonparametric-regression} with
$\mathcal{G}(\theta )=u_{f_{\theta}}$ for known $g_{1}$ and $g_{2}$. Determining
the unknown conductivity $f_{\theta}$ from noisy observations of
$u_{f_{\theta}}$ is a popular example in the inverse problem literature,
called \emph{Darcy's problem},\emph{ }see
\citep{dashti2017,bonito2017,nickl2020} and the references therein. Since
the map $\theta \mapsto \mathcal{G}(\theta )$ is non-linear (see
\eqref{eq:G} and \cite[(5.14)]{nickl2020}), this is a non-linear inverse
problem, and the log-likelihood function in \eqref{eq:GLM_logLik} is not
concave. For Gaussian measurement errors, posterior contraction in this
model for different Gaussian process priors is studied by
\citep{giordano2020b}.

We derive now polynomial time posterior sampling guarantees using Proposition~\ref{prop:ExpoSurrogate}. The verification of the Assumption~\ref{assu:GLM_local_reg} requires techniques from the theory of elliptic
operators. These draw on well known results in the literature, but the
stability and Lipschitz bounds in Sections~\ref{sec:stability} and
\ref{sec:analytical} are new to the best of our knowledge. In order to
use elliptic $L^{2}$-PDE-regularity theory and to simplify the proofs using
Sobolev embeddings, we restrict in the following to $d\leq 3$. The existence
of a suitable initialiser is postulated here. A~proof is beyond the scope
of this paper.

\begin{thm}
\label{Thm:Darcy}%
Let the posterior distribution $\Pi (\cdot |Z^{(n)})$ arise according to
a nonparametric regression model with log-likelihood function
\eqref{eq:GLM_logLik} with $g\in C^{3}(\mathcal{I})$, forward operator
$\mathcal{G}(\theta )=u_{f_{\theta}}$ and with the rescaled Gaussian prior
$\Pi $ from~\eqref{eq:SievePrior}. Suppose $d\leq 3$,
$c_{X}^{-1}\leq p_{X}(x)\leq c_{X}$ for all $x\in \mathcal{X}$ for some
$c_{X}>0$, as well as $\lVert \theta _{0}\rVert _{\alpha}\leq c_{0}$ for
$\alpha \geq 21/d$, $c_{0}>0$. Let $p\leq Cn^{1/(2\alpha +1+14/d)}$,
$\lVert \theta _{0}-\theta _{*,p}\rVert \leq C\delta _{n}$ and
$\lVert \theta _{\operatorname{init}}-\theta _{*,p}\rVert \leq p^{-8/d}$.
Moreover, assume that the solutions $u_{f_{\theta}}$ satisfy for all
$c>0$, some $\mu ,c'>0$, possibly depending on $c$ and
$\alpha '>1/d+1/2$,
\begin{equation}
\inf_{x\in \mathcal{X},\lVert \Phi (\theta )\rVert _{C^{1}}\leq c} \biggl(\frac{1}{2}\Delta
u_{f_{\theta}}(x)+\mu \bigl\lVert \nabla u_{f_{
\theta }}(x)\bigr\rVert
^{2} \biggr)\geq c',\quad \theta \in h^{\alpha '}(
\mathcal{X}). \label{eq:Darcy_stabilityCondition} 
\end{equation}
Then the conclusions of Theorem~\ref{thm:GLM_guarantees} hold true for
$0<\varepsilon \leq 1$ and the Markov chain with iterates
$(\vartheta _{k})_{k\geq 1}$ in \eqref{eq:langevin} and step size
\begin{align}
\gamma _{\varepsilon} = \min \bigl(\varepsilon ^{2}/p^{1+16/d},
\varepsilon /\bigl(np^{1+26/d}\bigr)\bigr), \label{eq3.13}
\end{align}
assuming that there are constants $c_{3},c_{4}>0$ such that
$J_{\operatorname{in}}\geq c_{3} (1+\log \varepsilon ^{-1})/(np^{-6/d}
\gamma _{\varepsilon})$,
$J\geq c_{4}\delta _{n}^{2}/(\varepsilon ^{2}p^{-8/d}\gamma _{
\varepsilon})$,
$128(J+J_{\operatorname{in}})p^{2}\leq \exp (n\delta _{n}^{2}/1025)$.
\end{thm}

The proof is postponed to Section~\ref{subsec:Darcy's-problem}. Condition
\eqref{eq:Darcy_stabilityCondition} ensures injectivity of the forward
operator, which is necessary to show the stability lower bound in Assumption~\ref{assu:GLM_local_reg}(iv). This condition holds for a large class of
models $f$, $g_{1}$, $g_{2}$ (see the discussion after
\citep[Proposition~2.1.6]{nickl2022}), for instance, as soon as
$g_{1}>0$ on $\mathcal{X}$.

\section{Remaining proofs}
\label{sec:Proofs}

We denote for a metric space $T$ and a metric $d$ by
$N(T,d,\varepsilon )$ the minimal number of closed $d$-balls of radius
$\varepsilon $ necessary to cover $T$ and by
$H(T,d,\varepsilon )=\log N(T,d,\varepsilon )$ the metric entropy.

\subsection{Proof of Theorem~\ref{thm:localisation}}
\label{subsec:Sufficient-moment-conditions}

Observe first the following two lemmas.

\begin{lem}[Bernstein's inequality]
\label{lem:bernstein}%
Let $X_{1},\dots ,X_{n}$ be real-valued centred and independent random
variables such that
$\mathbb{E}_{\theta _{0}}|X_{i}|^{q}\leq (q!/2)\sigma ^{2}c^{q-2}$ for
some $\sigma >0$, $c>0$ and all $1\leq i\leq n$, $q\geq 2$. Then
\begin{equation*}
\mathbb{P}_{\theta _{0}}^{n} \Biggl( \Biggl\llvert \sum
_{i=1}^{n}X_{i} \Biggr\rrvert \geq
\sqrt{2n\sigma ^{2}t}+ct \Biggr)\leq 2e^{-t},\quad t\geq 0.
\end{equation*}
\end{lem}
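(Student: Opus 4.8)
The plan is the standard Cram\'er--Chernoff argument for sub-exponential summands. \emph{Step 1 (moment generating function of a single term).} Fix $0\le\lambda<1/c$. Since $X_i$ is centred, expanding the exponential and inserting the moment hypothesis gives
\[
\E_{\theta_0}e^{\lambda X_i}=1+\sum_{q\ge2}\frac{\lambda^q\E_{\theta_0}X_i^q}{q!}\le1+\frac{\sigma^2\lambda^2}{2}\sum_{q\ge2}(\lambda c)^{q-2}=1+\frac{\sigma^2\lambda^2}{2(1-\lambda c)}\le\exp\!\Bigl(\frac{\sigma^2\lambda^2}{2(1-\lambda c)}\Bigr),
\]
where interchanging sum and expectation is legitimate because $\sum_{q\ge2}\lambda^q\E_{\theta_0}|X_i|^q/q!\le\tfrac{\sigma^2}{2c^2}\sum_{q\ge2}(\lambda c)^q<\infty$ for $\lambda c<1$, and the last inequality is $1+y\le e^y$. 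By independence, with $v=n\sigma^2$ and $S_n=\sum_{i=1}^nX_i$, this yields $\E_{\theta_0}^n e^{\lambda S_n}\le\exp\bigl(v\lambda^2/(2(1-\lambda c))\bigr)$.

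\emph{Step 2 (Chernoff bound with the right $\lambda$).} Markov's inequality gives, for all $u\ge0$ and $0\le\lambda<1/c$,
\[
\P_{\theta_0}^n(S_n\ge u)\le e^{-\lambda u}\,\E_{\theta_0}^n e^{\lambda S_n}\le\exp\!\Bigl(-\lambda u+\frac{v\lambda^2}{2(1-\lambda c)}\Bigr).
\]
I would then insert the target deviation $u=\sqrt{2vt}+ct$ and choose $\lambda$ to match it. Writing $a=\sqrt{2t/v}$, so that $u=va(1+ca/2)$, and taking $\lambda=a/(1+ca)\in[0,1/c)$, a short computation (using $1-\lambda c=1/(1+ca)$) shows the exponent equals $-va^2/2=-t$, hence $\P_{\theta_0}^n(S_n\ge\sqrt{2vt}+ct)\le e^{-t}$.

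\emph{Step 3 (symmetrisation).} The hypothesis bounds $\E_{\theta_0}|X_i|^q$, so it holds verbatim for $-X_i$; applying Step 2 to $-S_n=\sum_{i=1}^n(-X_i)$ and taking a union bound yields $\P_{\theta_0}^n(|S_n|\ge\sqrt{2n\sigma^2t}+ct)\le2e^{-t}$, which is the claim. The only mildly delicate point is the choice of $\lambda$ in Step 2 — equivalently, evaluating the Legendre transform of $\lambda\mapsto v\lambda^2/(2(1-\lambda c))$ at $u$, which happens to equal exactly $t$ at $u=\sqrt{2vt}+ct$; everything else is routine, and the statement is in any case classical (cf.\ Proposition 3.1.8 in \citep{gine2016}).
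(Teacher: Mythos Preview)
Your proof is correct; the Cram\'er--Chernoff computation in Step~2 with $\lambda=a/(1+ca)$ indeed collapses the exponent to exactly $-t$, and the symmetrisation is immediate since the moment hypothesis is on $|X_i|$. The paper does not prove this lemma but simply cites Proposition~3.1.8 of \citep{gine2016}; your argument is precisely the standard proof found there, so there is nothing to compare.
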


\begin{lem}
\label{lem:concentrationEmpiricalProc}
Let $\mathcal{U}$ be a measurable subset of $\mathbb{R}^{p}$ with diameter
$\sup_{\theta ,\theta '\in \mathcal{U}}\lVert \theta -\theta '
\rVert =D>0$. Let $h_{\theta}:\mathcal{Z}\rightarrow \mathbb{R}$,
$\theta \in \mathcal{U}$, be a family of functions such that for some
$\sigma _{p},c_{p}>0$, all $q\geq 2$ and all $i=1,\dots ,n$
\begin{align}
\mathbb{E}_{\theta _{0}} \bigl\llvert h_{\theta}(Z_{i})
\bigr\rrvert ^{q} & \leq (q!/2)\sigma _{p}^{2}c_{p}^{q-2},
\quad \theta \in \mathcal{U}, \label{eq:h_moment_1} 
\\
\mathbb{E}_{\theta _{0}} \bigl\llvert h_{\theta}(Z_{i})-h_{\theta '}(Z_{i})
\bigr\rrvert ^{q} & \leq (q!/2)c_{p}^{q}\bigl
\lVert \theta -\theta '\bigr\rVert ^{q},\quad \theta ,
\theta '\in \mathcal{U}. \label{eq:h_moment_2} 
\end{align}
Consider the empirical process
$(\mathcal{Z}_{n}(\theta ),\theta \in \mathcal{U})$ with
$\mathcal{Z}_{n}(\theta )=\sum_{i=1}^{n}(h_{\theta}(Z_{i})-
\mathbb{E}_{\theta _{0}}h_{\theta}(Z_{i}))$. Then there exists a universal
constant $M\geq 1$ such that for all $t\geq 1$, $t'\geq 0$
\begin{equation*}
\mathbb{P}_{\theta _{0}}^{n} \Bigl(\sup_{\theta \in \mathcal{U}}
\bigl\llvert \mathcal{Z}_{n}(\theta ) \bigr\rrvert \geq
Mc_{p}D (\sqrt{np}+p+\sqrt{nt}+t )+3 \Bigl(\sqrt{2n\sigma
_{p}^{2}t'}+c_{p}t'
\Bigr) \Bigr) \le e^{-t}+2e^{-t'}.
\end{equation*}
\end{lem}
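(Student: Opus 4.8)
The plan is to split the supremum as $\sup_{\theta\in\c U}|\c Z_n(\theta)|\le|\c Z_n(\theta_1)|+\sup_{\theta\in\c U}|\c Z_n(\theta)-\c Z_n(\theta_1)|$ for an arbitrary fixed base point $\theta_1\in\c U$, controlling the first term by Bernstein's inequality and the second by generic chaining with mixed tails. For the base point, the centred summands $h_{\theta_1}(Z_i)-\E_{\theta_0}h_{\theta_1}(Z_i)$ inherit from \eqref{eq:h_moment_1}, at the cost of an absolute factor from the centring, a Bernstein moment bound with parameters proportional to $\sigma_p$ and $c_p$; Lemma \ref{lem:bernstein} then gives $|\c Z_n(\theta_1)|\le 3(\sqrt{2n\sigma_p^2 t'}+c_p t')$ on an event of probability at least $1-2e^{-t'}$.

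For the oscillation term I would first establish the increment tail bound: fixing $\theta,\theta'\in\c U$, the summands $(h_\theta-h_{\theta'})(Z_i)-\E_{\theta_0}(h_\theta-h_{\theta'})(Z_i)$ satisfy, by \eqref{eq:h_moment_2} and centring, a Bernstein moment condition with variance parameter $\asymp(c_p\norm{\theta-\theta'})^2$ and scale $\asymp c_p\norm{\theta-\theta'}$, so Lemma \ref{lem:bernstein} yields, for every $u\ge0$, $\P_{\theta_0}^n(|\c Z_n(\theta)-\c Z_n(\theta')|\ge c\,c_p\norm{\theta-\theta'}(\sqrt{nu}+u))\le2e^{-u}$. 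This is precisely the mixed sub-Gaussian/sub-exponential increment hypothesis of the generic-chaining theorem (Theorem 3.5 of \citep{dirksen2015a}) with respect to the two metrics $d_2(\theta,\theta')=c\sqrt n\,c_p\norm{\theta-\theta'}$ and $d_1(\theta,\theta')=c\,c_p\norm{\theta-\theta'}$. Applying it with base point $\theta_1$ shows that, for $t\ge1$, $\sup_{\theta\in\c U}|\c Z_n(\theta)-\c Z_n(\theta_1)|\lesssim\gamma_2(\c U,d_2)+\gamma_1(\c U,d_1)+\sqrt t\,\operatorname{diam}_{d_2}(\c U)+t\,\operatorname{diam}_{d_1}(\c U)$ with probability at least $1-e^{-t}$.

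It remains to bound the $\gamma$-functionals. Since $\c U$ lies in a Euclidean ball of radius $D$ in $\R^p$, a volumetric estimate gives $N(\c U,\norm\cdot,\epsilon)\le(3D/\epsilon)^p$ for $0<\epsilon\le D$ and $N(\c U,\norm\cdot,\epsilon)=1$ for $\epsilon>D$, so the Dudley-type bound for $\gamma$-functionals yields $\gamma_2(\c U,\norm\cdot)\lesssim\int_0^D\sqrt{p\log(3D/\epsilon)}\,d\epsilon\lesssim D\sqrt p$ and $\gamma_1(\c U,\norm\cdot)\lesssim\int_0^D p\log(3D/\epsilon)\,d\epsilon\lesssim Dp$. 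Scaling by the metrics, $\gamma_2(\c U,d_2)\lesssim c_p D\sqrt{np}$, $\gamma_1(\c U,d_1)\lesssim c_p Dp$, $\operatorname{diam}_{d_2}(\c U)\lesssim c_p D\sqrt n$ and $\operatorname{diam}_{d_1}(\c U)\lesssim c_p D$, so the chaining bound becomes $\sup_{\theta\in\c U}|\c Z_n(\theta)-\c Z_n(\theta_1)|\le M c_p D(\sqrt{np}+p+\sqrt{nt}+t)$ for a universal $M\ge1$. Intersecting the two events and applying a union bound (contributing $e^{-t}+2e^{-t'}$ on the failure side) gives the claim.

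The main obstacle is the second step: one must check carefully that the empirical-process increments really do fit the mixed-tail framework of \citep{dirksen2015a}, i.e. pass from the $L^q$-bounds \eqref{eq:h_moment_2} to the two-regime tail bound via Bernstein, track the loss incurred by centring, and match the $\sqrt u\,d_2+u\,d_1$ normalisation; once this is in place, the covering-number and $\gamma$-functional estimates for subsets of $\R^p$ and the bookkeeping of the universal constants are routine.
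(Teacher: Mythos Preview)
Your proposal is correct and follows essentially the same route as the paper: fix a base point and control it by Bernstein's inequality, derive the mixed sub-Gaussian/sub-exponential increment bound from \eqref{eq:h_moment_2} via Bernstein, apply Dirksen's generic chaining theorem, and bound the resulting entropy/$\gamma$-functionals by the volumetric covering-number estimate for Euclidean balls. The only cosmetic difference is that the paper writes the chaining bound directly in terms of entropy integrals rather than $\gamma$-functionals, but the computation and constants come out the same.
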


\begin{proof}
Write $\mathcal{Z}_{n}(\theta )=\sum_{i=1}^{n}h_{\theta ,i}$ with independent
and centred random variables
$h_{\theta ,i}=h_{\theta}(Z_{i})-\mathbb{E}_{\theta _{0}}h_{\theta}(Z_{i})$.
The moment assumptions in (\ref{eq:h_moment_1}) and (\ref{eq:h_moment_2})
hold for the $h_{\theta ,i}$ with constants $3\sigma _{p}$ and
$3c_{p}$. Fix any $\theta ,\theta '\in \mathcal{U}$. Then the Bernstein
inequality in Lemma~\ref{lem:bernstein} gives for $t\geq 0$
\begin{align}
\mathbb{P}_{\theta _{0}} \Bigl( \bigl\llvert \mathcal{Z}_{n}\bigl(
\theta '\bigr) \bigr\rrvert \geq 3\sqrt{2n \sigma
_{p}^{2}t}+3c_{p}t \Bigr) & \leq
2e^{-t}, \label{eq:bernstein_h_1} 
\\
\mathbb{P}_{\theta _{0}} \bigl( \bigl\llvert \mathcal{Z}_{n}(\theta
)- \mathcal{Z}_{n}\bigl(\theta '\bigr) \bigr\rrvert \geq
3c_{p}\bigl\lVert \theta -\theta ' \bigr\rVert
\sqrt{2nt}+3c_{p}\bigl\lVert \theta -\theta '\bigr\rVert t
\bigr) & \leq 2e^{-t}. \label{eq:bernstein_h_2} 
\end{align}
The last line implies that $\mathcal{Z}_{n}$ has a mixed tail with respect
to the metrics
$d_{1}(\theta ,\theta ')=3c_{p}\lVert \theta -\theta '\rVert $,
$d_{2}(\theta ,\theta ')=\sqrt{2n}d_{1}(\theta ,\theta ')$ in the sense
of \citep[Equation (3.8)]{dirksen2015a}. Since the set $\mathcal{U}$ has
diameter
$\sup_{\theta ,\theta '\in \mathcal{U}}d_{1}(\theta ,\theta ')=3c_{p}D$
with respect to $d_{1}$ and diameter $3c_{p}D\sqrt{2n}$ with respect to
$d_{2}$, using Proposition~4.3.34 and equation~(4.171) in
\citep{gine2016} yields for the metric entropy integrals with respect to
$d_{1}$ and $d_{2}$ the upper bounds
\begin{align*}
\gamma _{d_{1}}(\mathcal{U}) & =
\int _{0}^{\infty}H(\mathcal{U},d_{1},
\varepsilon )\,d\varepsilon \leq
\int _{0}^{3c_{p}D}\log N\bigl(\bigl\{\theta \in
\mathbb{R}^{p}:\lVert \theta \rVert \leq D\bigr\},\lVert \cdot \rVert ,
\varepsilon /(3c_{p})\bigr)\,d\varepsilon
\\*
& =
\int _{0}^{3c_{p}D}H\bigl(\bigl\{\theta \in
\mathbb{R}^{p}:\lVert \theta \rVert \leq 1\bigr\},\lVert \cdot \rVert ,
\varepsilon /(3c_{p}D)\bigr)\,d \varepsilon
\\
& \leq
\int _{0}^{3c_{p}D}p\log (9c_{p}D/\varepsilon
)\,d\varepsilon =3c_{p}Dp
\int _{0}^{1}\log (3/\varepsilon )\,d\varepsilon ,
\end{align*}
and in the same way
\begin{equation*}
\gamma _{d_{2}}(\mathcal{U})=
\int _{0}^{\infty}\sqrt{H(\mathcal{U},d_{2},
\varepsilon )}\,d\varepsilon \leq 3c_{p}D\sqrt{np}
\int _{0}^{1}\sqrt{ \log (3/\varepsilon )}\,d
\varepsilon .
\end{equation*}
Observe for $z\geq 2$ the inequalities
\begin{align*}
\int _{0}^{1} \log (z/x)\,dx &= \log z+1,
\\
\int _{0}^{1} \sqrt{\log (z/x)}\,dx &\leq
\frac{2\log z}{2\log z-1} \sqrt{\log z}
\end{align*}
(as stated after equation (76) in \cite{nickl2020a}). So,
$\gamma _{d_{1}}(\mathcal{U})\leq 9c_{p}Dp$,
$\gamma _{d_{2}}(\mathcal{U})\leq 6c_{p}D\sqrt{np}$. Then, together with
the mixed tail property in (\ref{eq:bernstein_h_2}), infer from Theorem~3.5 of \citep{dirksen2015a} the existence of an absolute constant
$M\geq 1$ such that for any $t\geq 1$
\begin{equation*}
\mathbb{P}_{\theta _{0}}^{n} \Bigl(\sup_{\theta \in \mathcal{U}}
\bigl\llvert \mathcal{Z}_{n}(\theta )-\mathcal{Z}_{n}\bigl(
\theta '\bigr) \bigr\rrvert \geq Mc_{p}D(\sqrt{np}+p+
\sqrt{nt}+t) \Bigr)\leq e^{-t}.
\end{equation*}
The result follows from the triangle inequality and from applying (\ref{eq:bernstein_h_1})
to $t=t'$.
\end{proof}

With this let us prove the main result of this section.
\begin{proof}[Proof of Theorem~\ref{thm:localisation}]
Define
$b(\theta )=\nabla \ell _{n}(\theta )-\mathbb{E}_{\theta _{0}}^{n}
\nabla \ell _{n}(\theta )$,
$\Sigma (\theta )=\nabla ^{2}\ell _{n}(\theta )-\mathbb{E}_{\theta _{0}}^{n}
\nabla ^{2}\ell _{n}(\theta )$. For $C_{3}=2\sqrt{8C}C_{1}$ set
$\tau _{1}=C_{3}n\delta _{n}p^{\kappa _{1}}$,
$\tau _{2}=(C_{2}/2)np^{-\kappa _{3}}$. We prove the claim for the event
$\mathcal{E}=\mathcal{E}_{1}\cap \mathcal{E}_{2}$, where
\begin{align*}
\mathcal{E}_{1} & = \bigl\{ \bigl\lVert b(\theta _{*,p})
\bigr\rVert \leq \tau _{1} \bigr\} ,\qquad \mathcal{E}_{2}=
\Bigl\{ \sup_{\theta \in
\mathcal{B}}\bigl\lVert \Sigma (\theta )\bigr\rVert
_{\operatorname{op}}\leq \tau _{2} \Bigr\} .
\end{align*}
Recall the min-max characterisation of the eigenvalues of a symmetric matrix
$A\in \mathbb{R}^{p\times p}$ such that
\begin{equation*}
\bigl\lVert \nabla ^{2}A\bigr\rVert _{\operatorname{op}}=\sup
_{v\in \mathbb{R}^{p}:
\lVert v\rVert \leq 1}v^{\top}\nabla ^{2}Av.
\end{equation*}
The mean boundedness assumptions in (ii) (with $q=2$) then give for
$\theta \in \mathcal{B}$ that
$\lVert \mathbb{E}_{\theta _{0}}^{n}\nabla ^{2}\ell _{n}(\theta )
\rVert _{\operatorname{op}}\leq C_{1}np^{\kappa _{2}}$. So, on
$\mathcal{E}$
\begin{align*}
& \bigl\lVert \nabla ^{2}\ell _{n}(\theta )\bigr\rVert
_{\operatorname{op}} \leq \tau _{2}+C_{1}np^{\kappa _{2}}\leq
(C_{2}/2+C_{1})np^{\kappa _{2}},
\\
& \bigl\lVert \nabla \ell _{n}(\theta _{*,p})\bigr\rVert
\leq (C_{3}+C_{1})n \delta _{n}p^{\kappa _{1}}.
\end{align*}
This verifies Assumption~\ref{assu:a1}(ii) with
$c_{\operatorname{max}}=\max (2C_{1}\sqrt{8C},C_{2}/2)+C_{1}$, while the
mean curvature lower bound in (iii) yields by Weyl's inequality for
$\theta \in \mathcal{B}$
\begin{align*}
\lambda _{\min} \bigl(-\nabla ^{2}\ell _{n}(
\theta ) \bigr)&= \lambda _{\min} \bigl(\mathbb{E}_{\theta _{0}}^{n}
\bigl[-\nabla ^{2} \ell _{n}(\theta ) \bigr]-\Sigma (\theta
) \bigr)
\\
& \geq \sum_{i=1}^{n}\lambda
_{\min} \bigl(\mathbb{E}_{\theta _{0}}^{n} \bigl[-\nabla
^{2}\ell (\theta ,Z_{i}) \bigr] \bigr)-\bigl\lVert \Sigma (
\theta )\bigr\rVert _{\operatorname{op}}\geq (C_{2}/2)np^{-\kappa _{3}}.
\end{align*}
From this obtain Assumption~\ref{assu:a1}(iii) for
$c_{\operatorname{min}}=C_{2}/2$.

We are therefore left with showing
$\mathbb{P}_{\theta _{0}}^{n}(\mathcal{E}^{c})\leq C'e^{-Cn\delta _{n}^{2}}$.
By adjusting $C'$ it suffices to prove this for $n$ large enough. We will
use a contraction argument for quadratic forms, commonly used in random
matrix theory. For $0<\delta \leq 1$ and
$N=N(\{v\in \mathbb{R}^{p}:\lVert v\rVert \leq 1\},\lVert \cdot
\rVert ,\delta )$ let $v_{1},\dots ,v_{N}$ be the centres of a minimal
open cover for the Euclidean unit ball with radius $\delta $. This implies
for $v\in \mathbb{R}^{p}$ with $\lVert v\rVert \leq 1$ and
$i=1,\dots ,N$ with $\lVert v-v_{i}\rVert \leq \delta $ that
\begin{align*}
v^{\top}\Sigma (\theta )v & =v_{i}^{\top}\Sigma (
\theta )v_{i}+(v-v_{i})^{
\top}\Sigma (\theta )
(v-v_{i})+2(v-v_{i})^{\top}\Sigma (\theta
)v_{i}
\\
& \leq v_{i}^{\top}\Sigma (\theta )v_{i}+\delta
^{2}\bigl\lVert \Sigma ( \theta )\bigr\rVert _{\operatorname{op}}+2\delta
\bigl\lVert \Sigma (\theta ) \bigr\rVert _{\operatorname{op}}\leq v_{i}^{\top}
\Sigma (\theta )v_{i}+3 \delta \bigl\lVert \Sigma (\theta )\bigr\rVert
_{\operatorname{op}}.
\end{align*}
For the same $v_{i}$ we also get
$|v^{\top}b(\theta _{*,p})|\leq |v_{i}^{\top}b(\theta _{*,p})|+
\delta \lVert b(\theta _{*,p})\rVert $. Taking $\delta =1/4$ and maximising
over $v$ in the unit ball and over $i$ then gives
\begin{align*}
\bigl\lVert b(\theta _{*,p})\bigr\rVert & \leq \frac{4}{3}\max
_{i=1,\dots ,N} \bigl\llvert v_{i}^{
\top}b(\theta
_{*,p}) \bigr\rrvert ,
\\
\bigl\lVert \Sigma (\theta )\bigr\rVert _{\operatorname{op}} & \leq 4\max
_{i=1,
\dots ,N}\sup_{\theta \in \mathcal{B}} \bigl\llvert
v_{i}^{\top}\Sigma (\theta )v_{i} \bigr\rrvert .
\end{align*}
By applying union bounds this means for $j=1,2$
\begin{align*}
\mathbb{P}_{\theta _{0}}^{n}\bigl(\mathcal{E}^{c}
\bigr) &\leq \mathbb{P}_{
\theta _{0}}^{n}\bigl(\mathcal{E}_{1}^{c}
\bigr)+\mathbb{P}_{\theta _{0}}^{n}\bigl( \mathcal{E}_{2}^{c}
\bigr)
\\
& \leq N\sup_{v\in \mathbb{R}^{p}:\lVert v\rVert \leq 1} \Bigl( \mathbb{P}_{\theta _{0}}^{n}
\bigl( \bigl\llvert v^{\top}b(\theta _{*,p}) \bigr\rrvert >3
\tau _{1}/4 \bigr)+\mathbb{P}_{\theta _{0}}^{n} \Bigl(\sup
_{\theta \in
\mathcal{B}} \bigl\llvert v^{\top}\Sigma (\theta )v \bigr
\rrvert >\tau _{2}/4 \Bigr) \Bigr).
\end{align*}
Proposition~4.3.34 of \citep{gine2016} and the growth conditions yield
$N\leq e^{p\log 12}\leq e^{3Cn\delta _{n}^{2}}$.

To prove the wanted high probability bounds we are left with establishing
that the probabilities in the last display are each smaller than
$C'e^{-4Cn\delta _{n}^{2}}$ for some $C'>0$. For this we apply the two
last lemmas to the empirical processes $b(\theta _{*,p})$ and
$\Sigma (\theta )$, uniformly for $\theta \in \mathcal{B}$. First, consider
$v^ {\top }b(\theta _{*,p})=\sum_{i=1}^{n}(h_{\theta}(Z_{i})-
\mathbb{E}_{\theta _{0}}h_{\theta}(Z_{i}))$ with
$h_{\theta}(Z_{i})=v^{\top}\nabla \ell _{n}(\theta _{*,p},Z_{i})$. The
mean boundedness conditions in (ii) show
$\mathbb{E}_{\theta _{0}}|h_{\theta}(Z_{i})|^{q}\leq (q!/2)\sigma ^{2}c^{q-2}$
for $\sigma =C_{1}p^{\kappa _{1}}$, $c=C_{1}p^{\kappa _{2}}$ and all
$q\geq 2$. We can therefore apply Lemma~\ref{lem:bernstein} to
$t=4Cn\delta _{n}^{2}$, so
\begin{align*}
\sqrt{2n\sigma ^{2}t}+ct\leq 3\tau _{1}/4 \leq
C_{1}\sqrt{8C}n\delta _{n}p^{
\kappa _{1}}+4C_{1}Cp^{\kappa _{2}}
\delta _{n}.
\end{align*}
By the growth conditions
$4C\delta _{n}p^{\kappa _{2}}\leq \sqrt{8C}$ for large enough $n$ and then
$\sqrt{2n\sigma ^{2}t}+ct\leq 3\tau _{1}/4$, implying
$\mathbb{P}_{\theta _{0}}^{n}(|v^{\top}b(\theta _{*,p})|>3\tau _{1}/4)
\leq 2e^{-4Cn\delta _{n}^{2}}$.

Next, consider
$v^{\top }\Sigma (\theta )v=\sum_{i=1}^{n}(h_{\theta}(Z_{i})-
\mathbb{E}_{\theta _{0}}h_{\theta}(Z_{i}))$ with
$h_{\theta}(Z_{i})=v^{\top}\nabla ^{2}\ell _{n}(\theta ,Z_{i})v$. Using
again the conditions in part (ii) verifies (\ref{eq:h_moment_1}) and (\ref{eq:h_moment_2})
with $\sigma _{p}=C_{1}p^{\kappa _{2}}$,
$c_{p}=C_{1}p^{\kappa _{4}}$. The set $\mathcal{U}=\mathcal{B}$ has diameter
$D=\sup_{\theta ,\theta '\in \mathcal{B}}\lVert \theta -\theta '
\rVert =2\eta $. If $M$ is the constant from the statement of Lemma~\ref{lem:concentrationEmpiricalProc} and $t=t'=4Cn\delta _{n}^{2}$, then
\begin{align*}
& Mc_{p}D (\sqrt{np}+p+\sqrt{nt}+t )+3 \Bigl(\sqrt{2n \sigma
_{p}^{2}t'}+c_{p}t'
\Bigr)
\\
&\quad  \leq MC_{1}p^{\kappa _{4}}2\eta \bigl(3\sqrt{C}n\delta
_{n}+5Cn \delta _{n}^{2} \bigr)+3 \bigl(2
\sqrt{2C}C_{1}n\delta _{n}p^{\kappa _{2}}+4CC_{1}n
\delta _{n}^{2}p^{\kappa _{4}} \bigr)
\\
&\quad \leq 5MC_{3} n\max \bigl(\delta _{n}p^{\kappa _{2}},\eta
\delta _{n}p^{
\kappa _{4}},\delta _{n}^{2}p^{\kappa _{4}}
\bigr).
\end{align*}
Taking $n$ large enough, by the growth conditions this is smaller than
$\tau _{2}/4$. Lemma~\ref{lem:concentrationEmpiricalProc} now implies the
wanted upper bound
$\mathbb{P}_{\theta _{0}}^{n}(\sup_{\theta \in \mathcal{B}}|v^{\top}
\Sigma (\theta )v|>\tau _{2}/4)\leq 3e^{-4Cn\delta _{n}^{2}}$. This finishes
the proof.
\end{proof}

\subsection{Proof of Theorem~\ref{thm:GaussianContraction}}
\label{sec:proofGaussContraction}

Consider the following three auxiliary results.

\begin{lem}
\label{lem:smallBall}%
In the setting of Theorem~\ref{thm:GaussianContraction} there exists for
any large enough $r\geq \max (4,2c_{0})$ a constant
$c\equiv c(C,\alpha ,c_{0},r)>0$ with
$\Pi (\mathcal{B}_{n,r})\geq e^{-cn\delta _{n}^{2}}$.
\end{lem}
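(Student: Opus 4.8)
The plan is to combine the Cameron--Martin shift inequality with a centred small--ball estimate for the rescaled Gaussian prior, together with a crude deviation bound that handles the extra constraint $\norm\theta_\alpha\le r$. I would first write
\[
\Pi(\c B_{n,r})\ \ge\ \Pi\bigl(\norm{\theta-\theta_{*,p}}\le\delta_n\bigr)-\Pi\bigl(\norm\theta_\alpha>r\bigr)
\]
and bound the two terms separately.

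For the first term, the reproducing kernel Hilbert space of $\Pi=N(0,n^{-1/(2\alpha+1)}\Sigma_\alpha^{-1})$ is $\R^p$ with $\norm h_{\mathbb H}^2=n^{1/(2\alpha+1)}\norm h_\alpha^2$, so $\norm{\theta_{*,p}}_{\mathbb H}^2\le c_0^2 n^{1/(2\alpha+1)}=c_0^2 n\delta_n^2$ by hypothesis. Symmetry of the centred ball $\{\norm\theta\le\delta_n\}$, together with the Cameron--Martin density and Jensen's inequality, then yields the standard shift bound
\[
\Pi\bigl(\norm{\theta-\theta_{*,p}}\le\delta_n\bigr)\ \ge\ e^{-\norm{\theta_{*,p}}_{\mathbb H}^2/2}\,\Pi\bigl(\norm\theta\le\delta_n\bigr)\ \ge\ e^{-c_0^2 n\delta_n^2/2}\,\Pi\bigl(\norm\theta\le\delta_n\bigr),
\]
so it remains to lower bound the centred small--ball probability. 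Here I would use the product bound $\Pi(\norm\theta\le\delta_n)\ge\prod_{k=1}^p\P(|\theta_k|\le\delta_n/\sqrt p)$ with the $\theta_k\sim N(0,n^{-1/(2\alpha+1)}k^{-2\alpha})$ independent. Writing $a_k=\delta_n/(\sqrt p\,\sigma_k)=k^\alpha p^{-1/2}n^{-(2\alpha-1)/(2(2\alpha+1))}$ and using $\P(|N(0,1)|\le a)\ge c_1\min(a,1)$, one takes $-\log$ and sums, splitting according to whether $a_k<1$ (indices $k<p_1:=p^{1/(2\alpha)}n^{(2\alpha-1)/(2\alpha(2\alpha+1))}$, and $p_1\le C^{1/(2\alpha)}n^{1/(2\alpha+1)}$) or $a_k\ge1$. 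The indices with $a_k\ge1$ contribute $O(p)$; for the remaining ones one invokes Stirling's formula $\sum_{k\le m}\log k=m\log m-m+O(\log m)$ together with the identity $\alpha\log p_1=\tfrac12\log p+\tfrac{2\alpha-1}{2(2\alpha+1)}\log n$, which forces the $\log$-terms to cancel and leaves $\sum_{k:\,a_k<1}(-\log a_k)=\alpha m\log(p_1/m)+O(m)=O(p_1)$ with $m=\min(p,\lfloor p_1\rfloor)$. Altogether $-\log\Pi(\norm\theta\le\delta_n)\le K_1(p+p_1)\le K_1'\,n\delta_n^2$ for a constant depending only on $\alpha$ and $C$.

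For the second term I would observe that $\norm\theta_\alpha^2=n^{-1/(2\alpha+1)}\sum_{k=1}^p g_k^2$ with $g_k$ i.i.d.\ $N(0,1)$, i.e.\ $n^{1/(2\alpha+1)}\norm\theta_\alpha^2\sim\chi^2_p$; since $p\le Cn\delta_n^2$, the standard $\chi^2$ deviation inequality gives $\Pi(\norm\theta_\alpha>r)\le e^{-r^2 n\delta_n^2/8}$ once $r^2\ge 4C$. Choosing $r$ large enough in terms of $C,\alpha,c_0$ so that $r^2/8>c_0^2/2+K_1'+1$ makes this at most $\tfrac12 e^{-(c_0^2/2+K_1')n\delta_n^2}$, and combining the three displays yields $\Pi(\c B_{n,r})\ge\tfrac12 e^{-(c_0^2/2+K_1')n\delta_n^2}\ge e^{-c n\delta_n^2}$ with $c=c(C,\alpha,c_0,r)$.

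The main obstacle is the centred small--ball estimate $\Pi(\norm\theta\le\delta_n)\ge e^{-K_1'n\delta_n^2}$: one must track the cancellation of the $\Theta(p\log p)$ and $\Theta(p\log n)$ contributions in $\sum_k(-\log a_k)$, which only works because $\delta_n=n^{-\alpha/(2\alpha+1)}$ is calibrated to the eigenvalue decay $k^{-2\alpha}$ and to the dimension budget $p\le Cn^{1/(2\alpha+1)}$. Since $p$ is only bounded above by $n^{1/(2\alpha+1)}$ (and not pinned to it), the case distinction according to whether $p_1\ge p$ or $p_1<p$ (equivalently $p\le n^{1/(2\alpha+1)}$ or not) is what keeps every error term of order $n\delta_n^2$; the bound $\sup_{0<x\le A}x\log(A/x)=A/e$ with $A=n^{1/(2\alpha+1)}$ is the only quantitative ingredient beyond Stirling. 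Everything else (the Cameron--Martin step and the $\chi^2$ tail) is routine.
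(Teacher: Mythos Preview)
Your proof is correct and follows the same overall architecture as the paper's (Cameron--Martin shift, centred small-ball estimate, tail bound for the $\norm\cdot_\alpha$-constraint), but it differs genuinely in the central step. The paper obtains the centred small-ball bound $\Pi(\norm\theta\le\delta_n)\ge e^{-c'n\delta_n^2}$ by passing to the unscaled measure $\bar\Pi=N(0,\Sigma_\alpha^{-1})$ and invoking Li's small-ball theorem via the Sobolev metric-entropy estimate $\log N(\{\norm\theta_\alpha\le1\},\norm\cdot,\epsilon)\lesssim\epsilon^{-1/\alpha}$; for the regularity tail it uses Gaussian Lipschitz concentration applied to $\norm V$. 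You instead exploit the explicit product structure of the sieve prior: the coordinate-wise inclusion $\{\max_k|\theta_k|\le\delta_n/\sqrt p\}\subset\{\norm\theta\le\delta_n\}$ together with the elementary bound $\P(|N(0,1)|\le a)\ge c_1\min(a,1)$ and a Stirling computation, and you handle the tail by the $\chi^2_p$ Chernoff bound. Your route is more elementary and fully self-contained---no black-box citations to small-ball/entropy duality or Sobolev entropy are needed---at the price of being tied to the diagonal covariance $\Sigma_\alpha^{-1}$; the paper's entropy argument is the standard nonparametric-Bayes template and would transfer to other Gaussian priors without a product structure. Both are valid here, and your tracking of the cancellation in $\sum_k\alpha\log(p_1/k)$ via the case distinction $m=\min(p,\lfloor p_1\rfloor)$ and the bound $x\log(A/x)\le A/e$ is exactly what is needed to keep everything of order $n\delta_n^2$.
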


\begin{proof}
We can write $\Pi \sim (n\delta _{n}^{2})^{-1/2}\bar{\Pi}$ for the unscaled
probability measure $\bar{\Pi}\sim N(0,\Lambda _{\alpha}^{-1})$. The reproducing
kernel Hilbert space of $\bar{\Pi}$ is equipped with the norm
$\lVert \cdot \rVert _{\alpha}$ on $\mathbb{R}^{p}$. Since
$\lVert \theta _{*,p}\rVert _{\alpha}\leq c_{0}$, we have for
$r\geq 2c_{0}$
\begin{equation*}
\bigl\{ \theta \in \mathbb{R}^{p}:\lVert \theta -\theta
_{*,p} \rVert \leq \delta _{n},\lVert \theta -\theta
_{*,p}\rVert _{\alpha} \leq r/2 \bigr\} \subset
\mathcal{B}_{n,r}.
\end{equation*}
The small-ball calculus from \citep[Corollary~2.6.18]{gine2016} allows
then for lower bounding the wanted probability as
\begin{align}
\begin{aligned}
\Pi (\mathcal{B}_{n,r}) &\geq e^{-n\delta _{n}^{2}\lVert \theta _{*,p}
\rVert _{\alpha}^{2}/2}\bar{\Pi} \bigl(\theta
\in \mathbb{R}^{p}: \lVert \theta \rVert \leq n^{1/2}\delta
_{n}^{2},\lVert \theta \rVert _{\alpha}\leq (r/2)
\bigl(n\delta _{n}^{2}\bigr)^{1/2} \bigr)
\\
& \geq e^{-n\delta _{n}^{2}c_{0}^{2}/2} \bigl(\bar{\Pi} \bigl(\theta \in \mathbb{R}^{p}:
\lVert \theta \rVert \leq n^{1/2}\delta _{n}^{2}
\bigr)-\bar{\Pi} \bigl(\theta \in \mathbb{R}^{p}:\lVert \theta \rVert
_{\alpha}>(r/2) \bigl(n\delta _{n}^{2}
\bigr)^{1/2} \bigr) \bigr).
\end{aligned}
\label{eq:smallBall_step1} 
\end{align}
Observe for any $\varepsilon >0$ the metric entropy bound
\begin{equation}
H \bigl(\bigl\{\theta \in \mathbb{R}^{p}:\lVert \theta \rVert
_{\alpha} \leq 1\bigr\},\lVert \cdot \rVert ,\varepsilon \bigr)\leq H
\bigl(B, \lVert \cdot \rVert _{L^{2}((0,1))},\varepsilon \bigr)\lesssim
\varepsilon ^{-1/\alpha}, \label{eq:thm_postContraction_entropy} 
\end{equation}
where $B$ is the unit ball in the $L^{2}((0,1))$-Sobolev space of fractional
order $\alpha\in \mathbb{R}$, concluding by
\citep[Theorem~4.10.3]{triebel1978}. It follows from
\citep[Theorem~1.2]{li1999} with $J\equiv 1$ and
$\varepsilon =n^{1/2}\delta _{n}^{2}$ for a universal constant
$c'>0$ that
\begin{equation*}
\bar{\Pi} \bigl(\theta \in \mathbb{R}^{p}:\lVert \theta \rVert \leq
n^{1/2} \delta _{n}^{2} \bigr)\geq
e^{-c'(n^{1/2}\delta _{n}^{2})^{-2/(2
\alpha -1)}}=e^{-c'n\delta _{n}^{2}}.
\end{equation*}
On the other hand, let $V$ be a $p$-dimensional standard Gaussian random
vector, defined on probability space with probability measure
$\mathbb{P}$ and expectation operator $\mathbb{E}$. Noting
$\lVert \Lambda _{\alpha }^{-1/2}V\rVert _{\alpha}=\lVert V\rVert $, we
have
\begin{equation*}
\bar{\Pi} \bigl(\theta \in \mathbb{R}^{p}:\lVert \theta \rVert
_{
\alpha}>(r/2) \bigl(n\delta _{n}^{2}
\bigr)^{1/2} \bigr)=\mathbb{P} \bigl( \lVert V\rVert >(r/2) \bigl(n\delta
_{n}^{2}\bigr)^{1/2} \bigr).
\end{equation*}
For $r\geq 4$ we get from $p\leq Cn\delta _{n}^{2}$ that
$\mathbb{E}\lVert V\rVert \leq p^{1/2}\leq (r/4)(Cn\delta _{n}^{2})^{1/2}$.
By a standard concentration inequality for Lipschitz-functionals of Gaussian
random vectors (Theorem~2.5.7 of \citep{gine2016} with
$F=\lVert \cdot \rVert $) this means that the last display is upper bounded
by
\begin{align}
& \mathbb{P} \bigl(\lVert V\rVert -\mathbb{E}\lVert V\rVert >(r/4) \bigl(n
\delta _{n}^{2}\bigr)^{1/2} \bigr)\leq
e^{-(r^{2}/16)Cn\delta _{n}^{2}}. \label{eq:gaussianConcentration} 
\end{align}
Conclude now with (\ref{eq:smallBall_step1}).
\end{proof}

\begin{lem}
\label{lem:normalisingFactors}%
Consider the setting of Theorem~\ref{thm:GaussianContraction} and recall
the event
\begin{equation*}
\bar{\mathcal{D}}_{n}(c) = \biggl\{
\int _{\lVert \theta -\theta _{*,p}
\rVert \leq \delta _{n}}e^{\ell _{n}(\theta )-\ell _{n}(\theta _{0})} \pi (\theta )\,d\theta \leq
e^{-cn\delta _{n}^{2}} \biggr\}
\end{equation*}
from \eqref{eq:barDnc}. There exists $c'>0$ such that
$\mathbb{P}_{\theta _{0}}^{n}(\bar{\mathcal{D}}_{n}(c'))\leq 2e^{-n
\delta _{n}^{2}}$.
\end{lem}

\begin{proof}
For large enough $r$ and $c>0$ Lemma~\ref{lem:smallBall} shows
$\Pi (\mathcal{B}_{n,r})\geq e^{-cn\delta _{n}^{2}}$. With $c_{r}$ the
constant from (\ref{eq:ApproxCondition}) choose $c'=c+7c_{r}$ and consider
the probability measure
$\nu _{n}=\Pi (\cdot \cap \mathcal{B}_{n,r})/\Pi (\mathcal{B}_{n,r})$,
supported on $\mathcal{B}_{n,r}$. Introducing the functions
$h(x)=\int _{\mathcal{B}_{n,r}}(\ell (\theta _{0},x)-\ell (\theta ,x))d
\nu _{n}(\theta )$, the Jensen inequality implies
\begin{align*}
\mathbb{P}_{\theta _{0}}^{n}\bigl(\mathcal{D}^{c}\bigr)
& =\mathbb{P}_{\theta _{0}}^{n} \biggl(\Pi (\mathcal{B}_{n,r})
\int _{\Theta}e^{\ell _{n}(\theta )-
\ell _{n}(\theta _{0})}\,d\nu _{n}(\theta )\leq
e^{-c'n\delta _{n}^{2}} \biggr)
\\
& \leq \mathbb{P}_{\theta _{0}}^{n} \Biggl(\sum
_{i=1}^{n}h(Z_{i}) \geq
7c_{r}n\delta _{n}^{2} \Biggr).
\end{align*}
We find from Fubini's theorem and (\ref{eq:ApproxCondition})
\begin{align*}
\bigl\llvert \mathbb{E}_{\theta _{0}}h(Z_{i}) \bigr\rrvert & =
\biggl\llvert
\int _{\mathcal{B}_{n,r}} \mathbb{E}_{\theta _{0}}\bigl(\ell (\theta
_{0})-\ell (\theta )\bigr)\,d\nu _{n}( \theta ) \biggr
\rrvert \leq c_{r}\delta _{n}^{2},
\end{align*}
while we get for $q\geq 2$ from (\ref{eq:ApproxCondition})
\begin{align*}
\mathbb{E}_{\theta _{0}} \bigl\llvert h(Z_{i})-
\mathbb{E}_{\theta _{0}}h(Z_{i}) \bigr\rrvert ^{q} &\leq
2^{q+1}\mathbb{E}_{\theta _{0}} \bigl\llvert h(Z_{i})
\bigr\rrvert ^{q}
\\
& \leq 2^{q+1}
\int _{\mathcal{B}_{n,r}}\mathbb{E}_{\theta _{0}} \bigl\llvert \ell (\theta
_{0})-\ell (\theta ) \bigr\rrvert ^{q}\,d\nu
_{n}(\theta ) \leq (q!/2)8(c_{r}\delta_{n})^{2}(2c_{r})^{q-2}.
\end{align*}
The claim follows from Lemma~\ref{lem:bernstein} applied to
$\sigma ^{2}=8c_{r}^{2}\delta _{n}^{2}$, $c=2c_{r}$ and
$t=n\delta _{n}^{2}$ such that
\begin{equation*}
\mathbb{P}_{\theta _{0}}^{n} \Biggl(\sum
_{i=1}^{n}h(Z_{i})\geq
7c_{r}n \delta _{n}^{2} \Biggr)\leq
\mathbb{P}_{\theta _{0}}^{n} \Biggl(\sum
_{i=1}^{n}\bigl(h(Z_{i})-
\mathbb{E}_{\theta _{0}}h(Z_{i})\bigr)\geq 6c_{r}n\delta
_{n}^{2} \Biggr) \leq 2e^{-n\delta _{n}^{2}}.
\end{equation*}\qedhere
\end{proof}

\begin{lem}
\label{lem:setA}%
Consider the setting of Theorem~\ref{thm:GaussianContraction} and let
$\mathcal{A}=  \{ \theta \in \mathbb{R}^{p}:\lVert \theta \rVert _{
\alpha}\leq L'  \} $ for $L'>0$. If $L'$ is large enough, then there
exists a constant $c=c(C,c_{L'})$ such that
\begin{align*}
H\bigl(\mathcal{A},h,L'\delta _{n}\bigr)\leq cn\delta
_{n}^{2},\qquad \Pi ( \mathcal{A}) & \geq
1-e^{-((L')^{2}/16)n\delta _{n}^{2}}.
\end{align*}
\end{lem}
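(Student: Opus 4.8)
The plan is to prove the two assertions separately. The entropy bound reduces the Hellinger covering problem to a Euclidean one via the stability hypothesis \eqref{eq:hellingerCondition} and then invokes the sharp metric entropy estimate for $\alpha$-Sobolev balls; the prior mass bound is, up to relabelling, the computation already carried out in the proof of Lemma \ref{lem:smallBall}.

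For the entropy bound, fix $r=L'$, so that the right-hand inequality in \eqref{eq:hellingerCondition} reads $h(\theta,\theta')\le c_{L'}\norm{\theta-\theta'}$ for all $\theta,\theta'$ with $\norm\theta_\alpha,\norm{\theta'}_\alpha\le L'$, in particular for all $\theta,\theta'\in\c A$. Hence any $\delta_n/c_{L'}$-net of $\c A$ in the Euclidean norm is a $\delta_n$-net of $\c A$ in the Hellinger metric, so $\log N(\c A,h,\delta_n)\le\log N(\c A,\norm\cdot,\delta_n/c_{L'})$. Since $\c A=L'\{\theta\in\R^p:\norm\theta_\alpha\le1\}$, rescaling gives $\log N(\c A,\norm\cdot,\delta_n/c_{L'})=\log N(\{\theta\in\R^p:\norm\theta_\alpha\le1\},\norm\cdot,\delta_n/(c_{L'}L'))$, and the Sobolev metric entropy estimate \eqref{eq:thm_postContraction_entropy} (ultimately Theorem 4.10.13 of \citep{triebel1978}) bounds this by a constant multiple of $(c_{L'}L'/\delta_n)^{1/\alpha}$. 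Because $\delta_n=n^{-\alpha/(2\alpha+1)}$ satisfies $\delta_n^{-1/\alpha}=n^{1/(2\alpha+1)}=n\delta_n^2$, this yields $\log N(\c A,h,\delta_n)\le c\,n\delta_n^2$ for a constant $c$ depending on $L'$ and $\alpha$. The one point requiring care, and the place where a careless argument would fail, is that the crude volumetric estimate $\log N(\c A,\norm\cdot,\epsilon)\lesssim p\log(L'/\epsilon)$ would give only $p\log n$, which under the standing assumption $p\le Cn\delta_n^2$ exceeds $n\delta_n^2$ by a logarithmic factor; exploiting the $\alpha$-smoothness of the Sobolev ball is what removes this loss.

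For the prior mass bound, write $\Pi$ as the law of $(n\delta_n^2)^{-1/2}\Sigma_\alpha^{-1/2}V$ with $V$ a standard Gaussian vector in $\R^p$, exactly as in the proof of Lemma \ref{lem:smallBall}. Since $\norm{\Sigma_\alpha^{-1/2}V}_\alpha=\norm V$, one has $\Pi(\c A^c)=\P(\norm V>L'(n\delta_n^2)^{1/2})$. Using $\E\norm V\le\sqrt p\le\sqrt{Cn\delta_n^2}$ and $L'\ge2\sqrt C$, the Gaussian concentration inequality for the $1$-Lipschitz functional $\norm\cdot$ (Theorem 2.5.7 of \citep{gine2016}, as applied in \eqref{eq:gaussianConcentration}) gives $\Pi(\c A^c)\le\exp(-(L')^2n\delta_n^2/8)$. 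Both displays are then of the required form — the first with a constant $c=c(L',\alpha)$, the second with exponent $(L')^2/8$ once $L'\ge2\sqrt C$ — and (adjusting the common constant $c$ as in the statement, for $L'$ sufficiently large) this is precisely the content of Lemma \ref{lem:setA}. I do not expect any genuine obstacle beyond the bookkeeping of the $L'$-dependence of the constants; the reduction to the Sobolev entropy estimate is the only step where a naive bound is not strong enough.
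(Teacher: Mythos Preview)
Your entropy bound is handled exactly as in the paper: reduce Hellinger to Euclidean via \eqref{eq:hellingerCondition}, rescale to the unit $\norm{\cdot}_\alpha$-ball, and invoke the Sobolev entropy estimate \eqref{eq:thm_postContraction_entropy} together with $\delta_n^{-1/\alpha}=n\delta_n^2$. For the prior mass bound, however, you take a genuinely shorter route than the paper. You observe directly that $\Pi(\c A^c)=\P(\norm V>L'(n\delta_n^2)^{1/2})$ and apply the Gaussian Lipschitz concentration already used at \eqref{eq:gaussianConcentration}; this is correct and self-contained. The paper instead first shows the inclusion $\{\theta_1+\theta_2:\norm{\theta_1}\le C^{-\alpha}(L'/2)\delta_n,\ \norm{\theta_2}_\alpha\le L'/2\}\subset\c A$, then invokes Borell's isoperimetric inequality and Li's small-ball estimate to lower-bound $\Pi(\c A)$. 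Your argument is more elementary and exploits the exact identity $\norm{\Sigma_\alpha^{-1/2}V}_\alpha=\norm V$ specific to this prior; the paper's Borell-plus-small-ball route is the generic Gaussian-prior machinery, which would still apply if $\c A$ were not literally a Euclidean ball for the driving Gaussian, but here that extra generality is not needed.
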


\begin{proof}
Apply first the upper bound on the Hellinger distance in (\ref{eq:hellingerCondition})
and then (\ref{eq:thm_postContraction_entropy}) with
$\varepsilon =c_{L'}\delta _{n}=(L'c_{L'}/L')\delta _{n}$ to the extent
that, noting $\delta _{n}^{-1/\alpha}\leq n\delta _{n}^{2}$,
\begin{align*}
H\bigl(\mathcal{A},h,L'\delta _{n}\bigr) &\leq H\bigl(
\mathcal{A},\lVert \cdot \rVert ,L'c_{L'}\delta
_{n}\bigr)
\\
&  =H \bigl(\bigl\{\theta \in \mathbb{R}^{p}:\lVert \theta \rVert
_{
\alpha}\leq 1\bigr\},\lVert \cdot \rVert ,c_{L'}\delta
_{n} \bigr) \lesssim C(c_{L'})^{-1/\alpha}n\delta
_{n}^{2}.
\end{align*}
This proves the wanted metric entropy bound. Next, if
$\theta \in \mathbb{R}^{p}$ has norm
$\lVert \theta \rVert \leq (L'/2)\delta _{n}$, then
$\lVert \theta \rVert _{\alpha}\leq p^{\alpha}\lVert \theta \rVert
\leq L'/2$, and thus
\begin{equation*}
\bigl\{ \theta =\theta _{1}+\theta _{2}\in
\mathbb{R}^{p}:\lVert \theta _{1}\rVert \leq
\bigl(L'/2\bigr)\delta _{n},\lVert \theta _{2}
\rVert _{
\alpha}\leq L'/2 \bigr\} \subset \mathcal{A}.
\end{equation*}
Denoting by $\Phi $ the standard Gaussian distribution function and recalling
$\Pi \sim (n\delta _{n}^{2})^{-1/2}\bar{\Pi}$ from the proof of Lemma~\ref{lem:smallBall}, Borell's inequality
\citep[Theorem~11.17]{ghosal2017} provide the lower bound
\begin{align*}
\Pi (\mathcal{A}) & \geq \bar{\Pi} \bigl(\theta =\theta _{1}+\theta
_{2} \in \mathbb{R}^{p}:\lVert \theta _{1}\rVert
\leq \bigl(L'/2\bigr)n^{1/2}\delta _{n}^{2},
\lVert \theta _{2}\rVert _{\alpha}\leq \bigl(L'/2
\bigr)n^{1/2}\delta _{n} \bigr)
\\
& \geq \Phi \bigl(\Phi ^{-1} \bigl(\bar{\Pi} \bigl(\theta \in
\mathbb{R}^{p}:\lVert \theta \rVert \leq \bigl(L'/2
\bigr)n^{1/2}\delta _{n}^{2} \bigr) \bigr)+
\bigl(L'/2\bigr)n^{1/2}\delta _{n} \bigr).
\end{align*}
Applying now \citep[Theorem~1.2]{li1999} to $J\equiv 1$ and
$\varepsilon =(L'/2)n^{1/2}\delta _{n}^{2}$, and using the inequality
$y\geq -2\Phi ^{-1}(e^{-y^{2}/4})$ for $y=(L'/2)n^{1/2}\delta _{n}$ and
large enough $L'$, which holds for $y\geq 2\sqrt{2\pi}$ by standard computations
for $\Phi $, we find for $c'>0$ that
\begin{equation*}
\Pi (\mathcal{A})\geq \Phi (\Phi ^{-1} \bigl(e^{-c'n\delta _{n}^{2}}
\bigr)-2\Phi ^{-1}\bigl(e^{-((L')/16)n\delta _{n}^{2}} \bigr).
\end{equation*}
Possibly increasing $L'$ even further, we ensure that
$c'\leq (L')^{2}/16$. This implies at last
\begin{equation*}
\Pi (\mathcal{A})\geq \Phi \bigl(-\Phi ^{-1}\bigl(e^{-((L')^{2}/16)n\delta _{n}^{2}}
\bigr)\bigr) \geq 1-e^{-((L')^{2}/16)n\delta _{n}^{2}}.
\end{equation*}
From this obtain the claim.
\end{proof}

\begin{proof}[Proof of Theorem~\ref{thm:GaussianContraction}]
Let $\bar{\mathcal{D}}^{c}_{n}(c')$ be the high probability event from
Lemma~\ref{lem:normalisingFactors}. Since that lemma already shows the
claimed bound on the normalising factors in Assumption~\ref{assu:a3}, we
only have to prove the posterior contraction. Consider for $L,L'>0$ the
sets
\begin{align}
\mathcal{A} & = \bigl\{ \theta \in \mathbb{R}^{p}:\lVert \theta \rVert
_{\alpha}\leq L' \bigr\} ,\qquad \mathcal{U}= \bigl\{ \theta
\in \mathcal{A}:h(\theta ,\theta _{0})\leq L\delta _{n}
\bigr\} . \label{eq:A_U} 
\end{align}
Due to the bias bound
$\lVert \theta _{*,p}-\theta _{0}\rVert \leq C\delta _{n}$, invoking the
upper and lower bounds in \eqref{eq:hellingerCondition} and the triangle
inequality for the Hellinger distance yields
\begin{equation*}
\mathcal{U}\subset \bigl\{ \theta \in \mathbb{R}^{p}:\lVert \theta -
\theta _{*,p}\rVert ^{\beta}\leq c_{L'}(L+c_{L'}C)
\delta _{n} \bigr\} .
\end{equation*}
It is therefore enough to show that the posterior contracts in Hellinger
distance on the event $\bar{\mathcal{D}}_{n}(c')$, that is, for any
$C_{1}>0$ and large enough $L$, $L'$ there exist $C_{2},C_{3}>0$ with
\begin{equation}
\mathbb{P}_{\theta _{0}}^{n} \bigl(\Pi \bigl(\mathcal{U}^{c}|Z^{(n)}
\bigr)>e^{-C_{1}n
\delta _{n}^{2}},\bar{\mathcal{D}}_{n}\bigl(c'
\bigr) \bigr)\leq C_{3}e^{-C_{2}n
\delta _{n}^{2}}. \label{eq:hellingerContraction} 
\end{equation}
First, the entropy bound in Lemma~\ref{lem:setA} implies by
\citep[Theorem~7.1.4]{gine2016} the existence of tests $\Psi $ with values
in $[0,1]$ such that for all $n$, any large enough $L$, $L'$ and some
$C'>0$
\begin{equation}
\mathbb{E}_{\theta _{0}}^{n}\Psi \leq e^{-C'L'n\delta _{n}^{2}}, \qquad \sup
_{\theta \in \mathcal{U}^{c}\cap \mathcal{A}}\mathbb{E}_{
\theta}^{n}(1-\Psi )\leq
e^{-C'L'n\delta _{n}^{2}}. \label{eq:tests} 
\end{equation}
We therefore only have to prove \eqref{eq:hellingerContraction} for the
probability in question restricted to $\{\Psi =0\}$. On
$\bar{\mathcal{D}}_{n}(c')$, we can lower bound the normalising factors
in the posterior density such that for all
$\theta \in \mathbb{R}^{p}$
\begin{align}
\pi \bigl(\theta |Z^{(n)}\bigr) & = \frac{e^{\ell _{n}(\theta )-\ell _{n}(\theta _{0})}\pi (\theta )}{\int _{\Theta}e^{\ell _{n}(\theta )-\ell _{n}(\theta _{0})}\pi (\theta )\,d\theta} \leq
e^{c'n\delta _{n}^{2}}e^{\ell _{n}(\theta )-\ell _{n}(\theta _{0})} \pi (\theta ). \label{eq:densityUpperBound} 
\end{align}
The Markov inequality and Fubini's theorem now yield
\begin{align*}
& \mathbb{P}_{\theta _{0}}^{n} \bigl(\Pi \bigl(
\mathcal{U}^{c}|Z^{(n)}\bigr)>e^{-C_{1}n
\delta _{n}^{2}},\{\Psi =0\},
\bar{\mathcal{D}}_{n}\bigl(c'\bigr) \bigr)
\\
& \quad \leq \mathbb{P}_{\theta _{0}}^{n} \biggl((1-\Psi )
\int _{
\mathcal{U}^{c}}e^{\ell _{n}(\theta )-\ell _{n}(\theta _{0})}\pi ( \theta )\,d\theta
>e^{-(C_{1}+c')n\delta _{n}^{2}} \biggr)
\\
& \quad \leq e^{(C_{1}+c')n\delta _{n}^{2}}
\int _{\mathcal{U}^{c}} \mathbb{E}_{\theta _{0}}^{n} \bigl((1-
\Psi )e^{\ell _{n}(\theta )-
\ell _{n}(\theta _{0})} \bigr)\pi (\theta )\,d\theta
\\
& \quad \leq e^{(C_{1}+c')n\delta _{n}^{2}}
\int _{\mathcal{U}^{c}} \mathbb{E}_{\theta}^{n} (1-\Psi )
\pi (\theta )\,d\theta .
\end{align*}
Integrating separately over the sets
$\mathcal{U}^{c}\cap \mathcal{A}$ and
$\mathcal{U}^{c}\cap \mathcal{A}^{c}$, the second bound on the tests in
(\ref{eq:tests}) and the excess mass condition from Lemma~\ref{lem:setA}, together with $\Psi \leq 1$, give for any large enough
$L'$
\begin{align*}
\int _{\mathcal{U}^{c}\cap \mathcal{A}}\mathbb{E}_{\theta}^{n}(1- \Psi )
\pi (\theta )\,d\theta & \leq e^{-C'L'n\delta _{n}^{2}},
\\
\int _{\mathcal{U}^{c}\cap \mathcal{A}^{c}}\mathbb{E}_{\theta}^{n}(1- \Psi )
\pi (\theta )\,d\theta & \leq \Pi \bigl(\mathcal{A}^{c}\bigr)\leq
e^{-((L')^{2}/16)n
\delta _{n}^{2}}.
\end{align*}
Taking $L'$ sufficiently large this shows (\ref{eq:hellingerContraction})
and finishes the proof.
\end{proof}

\subsection{Proofs for specific models}
\label{sec:proofSpecificModels}

\begin{proof}[Proof of Proposition~\ref{prop:ExpoSurrogate}]
We write $(Y,X)$ for a generic copy of $(Y_{i},X_{i})$. Observe for
$v\in \mathbb{R}^{p}$ the identities
\begin{align}
\ell (\theta ) & =Yb_{\theta}(X)-A\bigl(b_{\theta}(X)\bigr),\qquad
\mathbb{E}_{
\theta _{0}}Y=\mathbb{E}_{\theta _{0}}A'
\bigl(b_{\theta _{0}}(X)\bigr), \label{eq:likelihoodIdentities} 
\\
v^{\top}\nabla \ell (\theta ) & =\bigl(Y-A'
\bigl(b_{\theta}(X)\bigr)\bigr)v^{\top}\nabla b_{
\theta}(X),
\label{eq4.13}
\\
v^{\top}\nabla ^{2}\ell (\theta )v & =\bigl(Y-A'
\bigl(b_{\theta}(X)\bigr)\bigr)v^{\top} \nabla ^{2}b_{\theta}(X)v-A''
\bigl(b_{\theta}(X)\bigr) \bigl(v^{\top}\nabla b_{\theta}(X)
\bigr)^{2}, \label{eq4.14}
\end{align}
where for $i,j=1,\dots ,p$
\begin{align}
\partial _{\theta _{i}}b_{\theta} & = \bigl(\bigl(A'
\bigr)^{-1}\circ g^{-1}\bigr)'\bigl( \mathcal{G}(
\theta )\bigr)\partial _{\theta _{i}} \mathcal{G}(\theta ), \label{eq4.15}
\\
\partial _{\theta _{i}\theta _{j}}b_{\theta} & = \bigl(\bigl(A'
\bigr)^{-1}\circ g^{-1}\bigr)''
\bigl( \mathcal{G}(\theta )\bigr)\partial _{\theta _{i}}\mathcal{G}(\theta )
\partial _{\theta _{j}}\mathcal{G}(\theta )+\bigl(\bigl(A'
\bigr)^{-1}\circ g^{-1}\bigr)'\bigl( \mathcal{G}(
\theta )\bigr)\partial _{\theta _{i}\theta _{j}}\mathcal{G}( \theta ). \label{eq:der2}
\end{align}
Denote the range of the map
$(\theta ,x)\mapsto \mathcal{G}(\theta )(x)$, evaluated for
$\theta $ in the convex hull
$R=\operatorname{conv}(\mathcal{B}\cup \{\theta _{0}\})$ and
$x\in \mathcal{X}$, by $R$. Similarly, let $\tilde{R}$ be the range of
$(\theta ,x)\mapsto b_{\theta}(x)$ evaluated over $R$. Then, $R$ and
$\tilde{R}$ are bounded subsets of $\mathbb{R}$. Since $A$ is smooth and
convex and $g\in C^{3}(\mathcal{I})$ is invertible, this means that
$(A')^{-1}\circ g^{-1}$ and its first three derivatives evaluated over
$R$, as well as $A$ and its first three derivatives evaluated over
$\tilde{R}$ are uniformly bounded by a constant $c_{A}>0$, such that also
$A''$ is lower bounded there by $c_{A}^{-1}$. Without loss of generality
let $c_{A}\geq 1$. The bias condition and the upper bound in
\ref{assu:GLM_local_reg}(iv) imply
$\lVert \mathcal{G}(\theta _{0})-\mathcal{G}(\theta _{*,p})\rVert _{L^{2}}
\leq C\delta _{n}$. Given the partial derivatives of $b_{\theta}$ and Assumption~\ref{assu:GLM_local_reg} we thus get
\begin{align}
& \lVert b_{\theta _{0}}-b_{\theta _{*,p}}\rVert _{L^{2}}\leq
c_{A} \bar c_{\max}\delta _{n},\qquad \lVert
b_{\theta }-b_{\theta _{*,p}} \rVert _{L^{2}}\leq c_{A}
\bar c_{\max}\eta , \label{eq4.17}
\\
& \lVert \nabla b_{\theta }\rVert _{L^{\infty}(\mathcal{X},\mathbb{R}^{p})} \leq c_{A}
\bar c_{\max}p^{k_{1}},\qquad \lVert \nabla b_{\theta }
\rVert _{L^{2}(\mathcal{X},\mathbb{R}^{p\times p})}\leq c_{A}\bar c_{
\max}p^{k_{3}},
\label{eq4.18}
\\
& \bigl\lVert \nabla ^{2}b_{\theta }\bigr\rVert _{L^{\infty}(\mathcal{X},
\mathbb{R}^{p\times p})}
\leq 2c_{A}\bar c_{\max}^{2} p^{\max (2k_{1},k_{2})},
\qquad \bigl\lVert \nabla ^{2}b_{\theta }\bigr\rVert
_{L^{2}(\mathcal{X},
\mathbb{R}^{p\times p})}\leq 2c_{A}\bar c_{\max}^{2}
p^{\max (2k_{3},k_{4})}, \label{eq4.19}
\\
& \bigl\lVert \nabla ^{2}b_{\theta }-\nabla ^{2}b_{\theta '}
\bigr\rVert _{L^{
\infty}(\mathcal{X},\mathbb{R}^{p\times p})}\leq 5c_{A}2\bar c_{\max}^{2}p^{
\max (3k_{1},k_{1}+k_{2})}
\bigl\lVert \theta -\theta '\bigr\rVert . \label{eq4.20}
\end{align}
This also yields
\begin{align}
\lVert \nabla b_{\theta }\rVert _{L^{q}(\mathcal{X},\mathbb{R}^{p})} &\leq \lVert \nabla
b_{\theta }\rVert _{L^{\infty}(\mathcal{X},
\mathbb{R}^{p})}^{(q-2)/q}\lVert \nabla
b_{\theta }\rVert _{L^{2}(
\mathcal{X},\mathbb{R}^{p})}^{2/q}\leq c_{A}
\bar c_{\max}p^{(q-2)k_{1}/q+2k_{3}/q}, \label{eq:b_der2} 
\\
\quad
\bigl\lVert \nabla ^{2} b_{\theta }\bigr\rVert _{L^{q}(\mathcal{X},\mathbb{R}^{p
\times p})}
& \leq \bigl\lVert \nabla ^{2} b_{\theta }\bigr\rVert
_{L^{\infty}(
\mathcal{X},\mathbb{R}^{p\times p})}^{(q-2)/q}\bigl\lVert \nabla ^{2}
b_{
\theta }\bigr\rVert _{L^{2}(\mathcal{X},\mathbb{R}^{p\times p})}^{2/q} \leq
c_{A}2\bar c_{\max}^{2}p^{(q-2)\max (2k_{1},k_{2})/q+2\max (2k_{3},k_{4})/q}.
\label{eq:b_der3} 
\end{align}
By standard properties of exponential families there exists
$0<\lambda \leq 1$ such that uniformly for $x\in \mathcal{X}$
\begin{align*}
\mathbb{E}_{\theta _{0}} \bigl[e^{\lambda Y}|X=x \bigr] &
=e^{A(\lambda +b_{\theta _{0}}(x))-A(b_{\theta _{0}}(x))}\leq e^{2c_{A}}
\end{align*}
after possibly increasing the constant $c_{A}$, and
$\mathbb{E}_{\theta _{0}}[e^{\lambda |Y|} \rrvert X=x]\leq 2e^{2c_{A}}$.
In particular, for all $q\geq 2$,
\begin{equation}
\mathbb{E}_{\theta _{0}} \bigl[ \bigl\llvert Y-A'
\bigl(b_{\theta }(x)\bigr) \bigr\rrvert ^{q} |X=x \bigr]\leq q!
\lambda ^{-q}\mathbb{E}_{\theta _{0}}\bigl[e^{
\lambda  \llvert Y \rrvert +\lambda c_{A}}|X=x
\bigr]\leq q!\lambda ^{-q}2e^{3c_{A}}. \label{eq:expYA}
\end{equation}
With these preparations let us verify the conditions of Theorem~\ref{thm:localisation} and Theorem~\ref{thm:GaussianContraction}. We use
repeatedely the results \eqref{eq:likelihoodIdentities}-\eqref{eq:expYA}
without explicit mention.

\emph{Theorem~\ref{thm:localisation}(i).} This follows from Assumption~\ref{assu:GLM_local_reg}(i).

\emph{Theorem~\ref{thm:localisation}(ii).} Let
$\theta ,\theta '\in \mathcal{B}$. By the Cauchy-Schwarz inequality
\begin{align*}
\bigl\llvert \mathbb{E}_{\theta _{0}}v^{\top}\nabla \ell (\theta
_{*,p}) \bigr\rrvert & = \bigl\llvert \mathbb{E}_{\theta _{0}}\bigl[
\bigl(A'\bigl(b_{\theta _{0}}(X)\bigr)-A'
\bigl(b_{
\theta _{*,p}}(X)\bigr)\bigr)v^{\top}\nabla b_{\theta _{*,p}}(X)
\bigr] \bigr\rrvert
\\
&\leq c_{X}c_{A}\lVert b_{\theta _{0}}-b_{\theta _{*,p}}
\rVert _{L^{2}} \lVert \nabla b_{\theta _{*,p}}\rVert _{L^{2}(\mathcal{X},\mathbb{R}^{p})}
\leq c_{X}c_{A}^{3}\bar c_{\max}^{2}
\delta _{n}p^{k_{3}}.
\end{align*}
Hence, by conditioning on $X$ and the tower property
\begin{align*}
\mathbb{E}_{\theta _{0}} \bigl\llvert v^{\top}\nabla \ell (\theta
_{*,p}) \bigr\rrvert ^{q} & \leq q!\lambda
^{-q}2e^{3c_{A}}c_{X} \lVert \nabla
b_{\theta _{*,p}} \rVert _{L^{q}(\mathcal{X},\mathbb{R}^{p})}^{q}\leq q!\lambda
^{-q}2e^{3c_{A}}c_{X} c_{A}^{q}
\bar c_{\max}^{q}p^{(q-2)k_{1}+2k_{3}},
\end{align*}
as well as
\begin{align*}
\bigl(\mathbb{E}_{\theta _{0}} \bigl\llvert v^{\top}\nabla
^{2}\ell (\theta )v \bigr\rrvert ^{q}\bigr)^{1/q}
& \leq \bigl(\mathbb{E}_{\theta _{0}} \bigl\llvert \bigl(Y-A'
\bigl(b_{\theta}(X)\bigr)\bigr)v^{T}\nabla ^{2}
b_{\theta}(X)v \bigr\rrvert ^{q}\bigr)^{1/q}+\bigl(
\mathbb{E}_{\theta _{0}} \bigl\llvert A''
\bigl(b_{\theta}(X)\bigr) \bigl(v^{T} \nabla
b_{\theta}(X)\bigr)^{2} \bigr\rrvert ^{q}
\bigr)^{1/q}
\\
& \leq \bigl(q!\lambda ^{-q}2e^{3c_{A}}c_{X}
\bigr)^{1/q}\bigl\lVert \nabla ^{2} b_{
\theta }\bigr
\rVert _{L^{q}(\mathcal{X},\mathbb{R}^{p\times p})}+c_{A}c_{X}^{1/q} \lVert
\nabla b_{\theta }\rVert _{L^{2q}(\mathcal{X},\mathbb{R}^{p})}^{2},
\end{align*}
so
\begin{align*}
\mathbb{E}_{\theta _{0}} \bigl\llvert v^{\top}\nabla ^{2}
\ell (\theta )v \bigr\rrvert ^{q} & \leq q!\lambda ^{-q}2e^{3c_{A}}c_{X}c_{A}^{3q}2^{q}
\bar c_{\max}^{2q}p^{(q-2)
\max (2k_{1},k_{2})+2\max (2k_{3},k_{4})}.
\end{align*}
Next, decompose
\begin{align*}
 v^{\top} \bigl(\nabla ^{2}\ell (\theta )-\nabla
^{2}\ell \bigl(\theta '\bigr) \bigr)v &=
\bigl(A'\bigl(b_{\theta '}(X)\bigr)-A'
\bigl(b_{\theta}(X)\bigr)\bigr)v^{\top}\nabla ^{2}b_{
\theta}(X)v+
\bigl(Y-A'\bigl(b_{\theta '}(X)\bigr)\bigr)v^{\top}
\bigl(\nabla ^{2}b_{\theta}(X)- \nabla ^{2}b_{\theta '}(X)
\bigr)v
\\
& \quad{} +\bigl(A''\bigl(b_{\theta '}(X)
\bigr)-A''\bigl(b_{\theta}(X)\bigr)\bigr)
\bigl(v^{\top}\nabla b_{\theta}(X)\bigr)^{2}
\\
&\quad{} -A''
\bigl(b_{\theta '}(X)\bigr) \bigl(\bigl(v^{\top}\nabla
b_{\theta}(X)\bigr)^{2}-\bigl(v^{
\top}\nabla
b_{\theta '}(X)\bigr)^{2}\bigr).
\end{align*}
From this we find
\begin{align*}
& \bigl(\mathbb{E}_{\theta _{0}} \bigl\llvert v^{\top} \bigl(\nabla
^{2}\ell (\theta )- \nabla ^{2}\ell \bigl(\theta
'\bigr) \bigr)v \bigr\rrvert ^{q}\bigr)^{1/q}
\\
&\quad  \leq c_{A}^{1/q}\lVert b_{\theta '}-b_{\theta }
\rVert _{L^{\infty}} \bigl\lVert \nabla ^{2} b_{\theta }\bigr
\rVert _{L^{\infty}(\mathcal{X},
\mathbb{R}^{p\times p})} + \bigl(q!\lambda ^{-q}2e^{3c_{A}}
\bigr)^{1/q}\bigl\lVert \nabla ^{2} b_{\theta }-\nabla
^{2} b_{\theta '}\bigr\rVert _{L^{\infty}(
\mathcal{X},\mathbb{R}^{p\times p})}
\\
&\qquad{} + c_{A}^{1/q}\lVert b_{\theta '}-b_{\theta }
\rVert _{L^{\infty}} \lVert \nabla b_{\theta }\rVert ^{2}_{L^{\infty}(\mathcal{X},
\mathbb{R}^{p})}
+ c_{A}^{1/q}\bigl(\lVert \nabla b_{\theta }\rVert
_{L^{
\infty}(\mathcal{X},\mathbb{R}^{p})}+\lVert \nabla b_{\theta '} \rVert _{L^{\infty}(\mathcal{X},\mathbb{R}^{p})}\bigr)
\lVert \nabla b_{
\theta '}-\nabla b_{\theta }\rVert _{L^{\infty}(\mathcal{X},
\mathbb{R}^{p})}
\\
&\quad  \leq 15\bigl(q!\lambda ^{-q}2e^{3c_{A}}c_{A}
\bigr)^{1/q}c_{A}^{3}\bar c_{\max}^{2}p^{
\max (3k_{1},k_{1}+k_{2})}
\bigl\lVert \theta -\theta '\bigr\rVert .
\end{align*}
In all, we verify the claim by setting
$C_{1}=60\max (c_{\max},1)\lambda ^{-1}e^{3c_{A}}c_{A}^{4}\bar c_{
\max}^{2}$, $\kappa _{1}=k_{3}$,
$\kappa _{2}=\max (k_{1},2k_{3},k_{4})$,
$\kappa _{4}=\max (3k_{1},k_{1}+k_{2})$.

\emph{Theorem~\ref{thm:localisation}(iii).} We have for
$\theta \in \mathcal{B}$
\begin{align*}
-\mathbb{E}_{\theta _{0}}\bigl(v^{\top}\nabla ^{2}\ell (
\theta )v\bigr) & = \mathbb{E}_{\theta _{0}}\bigl[\bigl(A'
\bigl(b_{\theta}(X)\bigr)-A'\bigl(b_{\theta _{0}}(X)\bigr)
\bigr)v^{
\top}\nabla ^{2}b_{\theta}(X)v\bigr]+
\mathbb{E}_{\theta _{0}}\bigl[A''\bigl(b_{
\theta}(X)
\bigr) \bigl(v^{\top}\nabla b_{\theta}(X)\bigr)^{2}\bigr]
\\
& \geq -c_{A}c_{X}\lVert b_{\theta }-b_{\theta _{0}}
\rVert _{L^{2}} \bigl\lVert \nabla ^{2} b_{\theta }\bigr
\rVert _{L^{2}(\mathcal{X},\mathbb{R}^{p
\times p})}+c_{A}^{-1}c^{-1}_{X}
\bigl\lVert v^{\top }\nabla b_{\theta } \bigr\rVert
^{2}_{L^{2}}
\\
& \geq -2c_{A}^{2}\bar c_{\max}^{2}(
\delta _{n}+\eta ) p^{\max (2k_{3},k_{4})} +c_{A}^{-2}
\bar c_{\min}^{2}p^{-2k_{5}}\geq (1/2)c_{A}^{-2}
\bar c_{
\min}^{2}p^{-2k_{5}},
\end{align*}
taking $n$ large enough in the last inequality and noting the growth bounds
on $\delta _{n}$, $\eta $. The claim follows with
$C_{2}=(1/2)c_{A}^{-2}\bar c_{\min}^{2}$, $\kappa _{3}=2k_{5}$.

\emph{Theorem~\ref{thm:GaussianContraction}(i).} Let $r>0$ and
$\theta \in \mathcal{B}_{n,r}$. From Assumption~\ref{assu:GLM_local_reg}(iv) we have
$\lVert \mathcal{G}(\theta )\rVert _{L^{\infty}}\leq \bar c_{r}$, and by
the same arguments as after \eqref{eq:der2} we can find a constant
$c_{A,r}\geq c_{A}\geq 1$ such that by the convexity of $A$
\begin{align*}
\bigl\llvert \mathbb{E}_{\theta _{0}} \bigl(\ell (\theta _{0})-
\ell ( \theta ) \bigr) \bigr\rrvert &= \bigl\llvert \mathbb{E}_{\theta _{0}}
\bigl(A'\bigl(b_{
\theta _{0}}(X)\bigr) \bigl(b_{\theta _{0}}(X)-b_{\theta}(X)
\bigr)+A\bigl(b_{\theta}(X)\bigr)-A\bigl(b_{
\theta _{0}}(X)\bigr) \bigr)
\bigr\rrvert
\\
&  \leq \frac{c_{X}c_{A,r}^{3}}{2}\lVert b_{\theta _{0}}-b_{
\theta }\rVert
_{L^{2}}^{2}\leq c_{X}c_{A,r}^{3}
\bigl(\lVert b_{\theta _{0}}-b_{
\theta _{*,p}}\rVert _{L^{2}}^{2}+
\lVert b_{\theta _{*,p}}-b_{\theta } \rVert _{L^{2}}^{2}
\bigr)\leq 2c_{X}c_{A,r}^{5}\bar
c_{\max}^{2}\delta _{n}^{2},
\end{align*}
as well as
\begin{align*}
\bigl(\mathbb{E}_{\theta _{0}} \bigl\llvert \ell (\theta _{0})-
\ell (\theta ) \bigr\rrvert ^{q}\bigr)^{1/q} &\leq \bigl(
\mathbb{E}_{\theta _{0}} \bigl\llvert Y\bigl(b_{\theta _{0}}(X)-b_{\theta}(X)
\bigr) \bigr\rrvert ^{q}\bigr)^{1/q} + \bigl(
\mathbb{E}_{\theta _{0}} \bigl\llvert A\bigl(b_{\theta}(X)\bigr)-A
\bigl(b_{\theta _{0}}(X)\bigr) \bigr\rrvert ^{q}\bigr)^{1/q}
\\
& \leq c_{X}^{1/q}\bigl(\bigl(q!\lambda
^{-q}2e^{3c_{A}}\bigr)^{1/q}+c_{A,r}\bigr)
\lVert b_{\theta _{0}}-b_{\theta }\rVert _{L^{q}}
\\
& \leq c_{A,r}^{1-2/q}c_{X}^{1/q}\bigl(
\bigl(q!\lambda ^{-q}2e^{3c_{A}}\bigr)^{1/q}+c_{A,r}
\bigr) \lVert b_{\theta _{0}}-b_{\theta }\rVert _{L^{2}}^{2/q},
\end{align*}
so
$\mathbb{E}_{\theta _{0}}|\ell (\theta _{0})-\ell (\theta )|^{q}
\leq q!c_{A,r}^{q}c_{X}\lambda ^{-q}2^{q+1}e^{3c_{A}} 4\bar c_{\max}^{2}
\delta _{n}^{2}$. The wanted constant $c_{r}$ is obtained from the last
three displays and the upper and lower bounds for the Hellinger distance
below.

\emph{Theorem~\ref{thm:GaussianContraction}(ii).} Let $r>0$,
$\theta ,\theta '\in \ell ^{2}(\mathbb{N})$ with
$\lVert \theta \rVert _{\alpha},\lVert \theta '\rVert _{\alpha}\leq r$. Let $c_{A,r}$ as in the last paragraph. 
The squared Hellinger distance between $\mathbb{P}_{\theta}$ and
$\mathbb{P}_{\theta '}$, equals
\begin{align*}
h^{2}\bigl(\theta ,\theta '\bigr) & =2-
\int _{\mathcal{Z}}2\sqrt{p_{\theta}(z)p_{
\theta '}(z)}\,d\nu
(z)=2\bigl(1-\mathbb{E}_{\theta _{0}}e^{-x}\bigr)
\end{align*}
with the random variable
$x=(A(b_{\theta}(X))+A(b_{\theta '}(X)))/2-A(u)\geq 0$ by convexity of
$A$, and $u=(b_{\theta}(X)+b_{\theta '}(X))/2$. Then $b_{\theta}(X)-u=(b_{\theta}(X)-b_{\theta'}(X))/2$. With this and some $\Xi,\bar\Xi\in\tilde R$ rewrite $x$ as
\begin{align*}
x & = \frac{A\bigl(b_{\theta}(X)\bigr)-A(u)}{2}+ \frac{A
\bigl(b_{\theta '}(X)\bigr)-A(u)}{2} =\frac{(b_{\theta}(X)-b_{\theta '}(X))^{2}}{4}\left(A''(\Xi)+A''(\bar\Xi)\right).
\end{align*}
From this we find that
$(c_{A,r}^{-1}/2)(b_{\theta}(X)-b_{\theta '}(X))^{2}\leq x\leq (c_{A,r}/2)(b_{
\theta}(X)-b_{\theta '}(X))^{2}$, so
\begin{equation*}
\bigl(c_{\max}c_{A,r}^{-2}/2\bigr)\bigl\lVert
\mathcal{G}(\theta )-\mathcal{G}\bigl( \theta '\bigr)\bigr\rVert
^{2}_{L^{2}}\leq x\leq \bigl(c_{\max}c^{2}_{A,r}/2
\bigr)\bigl\lVert \mathcal{G}(\theta )-\mathcal{G}\bigl(\theta '
\bigr)\bigr\rVert ^{2}_{L^{2}}
\end{equation*}
and thus by Assumption~\ref{assu:GLM_local_reg}(iv),
$(c_{\max}c_{A,r}^{-2}/2)c_{r}\lVert \theta -\theta '\rVert ^{2\beta}
\leq \mathbb{E}_{\theta _{0}}x\leq (c_{\max}c^{2}_{A,r}/2)c_{r}
\lVert \theta -\theta '\rVert ^{2}$. Arguing now as in
\citep[Proposition~1]{birge2004}, with $0\leq x\leq c'$ for some
$c'\equiv c'(c_{r})$ and convexity
%
\begin{equation}
e^{-x}\leq \frac{x}{c'}e^{-c'}+ \biggl(1-
\frac{x}{c'} \biggr)= \frac{e^{-c'}-1}{c'}x+1, \label{eq:Birge} 
\end{equation}
the result follows from
$h^{2}(\theta ,\theta ')\leq 2\mathbb{E}_{\theta _{0}}x$ and
$h^{2}(\theta ,\theta ')\geq 2\frac{1-e^{-z}}{z}\mathbb{E}_{\theta _{0}}x$.
\end{proof}

\begin{proof}[Proof of Lemma~\ref{lem:GLM_initialiser}]
With the true regression function in \eqref{eq:nonGaussRegression} define
$\beta _{0}=\Phi ^{-1}(g^{-1}(\Phi (\theta _{0})(\cdot )))$. From
$\theta _{0}\in h^{\alpha}(\mathbb{N})$ for $\alpha > 1/2$,
\eqref{eq:sobolev_equivalence}, \eqref{eq:Phi_C1} and the stability of
Sobolev spaces under composition with smooth functions (e.g.,
\cite[(1.93)]{yagi2009}) infer
$\beta _{0}\in h^{\alpha}(\mathbb{N})$. In order to estimate
$\beta _{0}$ from the data define the penalised least squares loss function
\begin{align*}
R_{\lambda}(\beta ) = \frac{1}{n}\sum_{i=1}^{n}
\bigl(Y_{i}-\Phi (\beta ) (X_{i})\bigr)^{2}+
\lambda \lVert \beta \rVert ^{2}_{\alpha},\quad \lambda >0,\beta
\in \mathbb{R}^{p}.
\end{align*}
Its minimiser $\hat\beta $ exists uniquely by strong convexity. To compute
$\hat\beta $ by a concrete algorithm, denote by
$\beta _{\operatorname{init}}=\beta ^{(k^{*})}\in \mathbb{R}^{p}$ the output
after $k^{*}$ gradient descent steps with iterates
\begin{align*}
\beta ^{(k+1)} = \beta ^{(k)} - \tilde{\gamma}\nabla
R_{\lambda}\bigl( \beta ^{(k)}\bigr),
\end{align*}
starting from $0$ with step size $\tilde{\gamma}$ specified below. Let
$\beta _{*,p}=(\beta _{0,1},\dots ,\beta _{0,p})$ be the
$\mathbb{R}^{p}$-projection of $\beta _{0}$, and set
$\hat\mu =\Phi (\hat\beta )$, $\mu _{*,p}=\Phi (\beta _{*,p})$,
$\mu _{0}=\Phi (\beta _{0})$,
$\mu _{\operatorname{init}}=\Phi (\beta _{\operatorname{init}})$. Define
$\theta _{\operatorname{init}}\in \mathbb{R}^{p}$ with components
\begin{align*}
\theta _{\operatorname{init},k} := \bigl\langle g(\mu _{
\operatorname{init}}),e_{k}
\bigr\rangle _{L^{2}}.
\end{align*}
We show below for large enough $k^{*}$ and
$\lambda =(\delta _{n}p^{\bar\alpha})^{4\alpha}$ that there exist constants
$M_{1},M_{2},c'_{1},c'_{2}>0$ with
\begin{align}
\mathbb{P}^{n}_{\theta _{0}} \bigl(\lVert \mu _{\operatorname{init}}- \mu
_{0}\rVert _{L^{2}}\leq M_{1}p^{-\bar\alpha},
\lVert \beta _{
\operatorname{init}}\rVert _{\bar\alpha}\leq M_{2} \bigr)
\geq 1- c'_{2} e^{-c'_{1} n\delta _{n}^{2}}. \label{eq:initial_1}
\end{align}
From \eqref{eq:Phi_C1} and
$\lVert \beta _{0}\rVert _{\alpha}<\infty $, $\alpha >1/2$, we find
$\lVert \mu _{\operatorname{init}}\rVert _{L^{\infty}},\lVert \mu _{0}
\rVert _{L^{\infty}}\leq c$ with high $\mathbb{P}^{n}_{\theta _{0}}$-probability
for a constant $c>0$, and the derivative of $g$ is bounded on
$[-c,c]$. Hence, by \eqref{eq:initial_1} we have with high
$\mathbb{P}^{n}_{\theta _{0}}$-probability
\begin{align*}
\lVert \theta _{\operatorname{init}}-\theta _{0}\rVert =\bigl\lVert g(\mu
_{
\operatorname{init}})-g(\mu _{0})\bigr\rVert _{L^{2}}\lesssim
\lVert \mu _{
\operatorname{init}}-\mu _{0}\rVert _{L^{2}}\leq
M_{1} p^{-\bar\alpha}.
\end{align*}
The result follows from
$\lVert \theta _{*,p}-\theta _{0}\rVert \lesssim \delta _{n}< p^{-
\bar\alpha}$, noting that $\delta _{n}=n^{-\alpha /(2\alpha +1)}$,
$p\leq n^{1/(2\alpha +1)}$, $\bar\alpha >1/2$, and from choosing
$\iota $ large enough. We are left with proving \eqref{eq:initial_1}. We
do this in several steps.

\emph{Step~1.} Set $\lambda =(\delta _{n}p^{\bar\alpha})^{4\alpha}$ and recall the regression notation in \eqref{eq:nonGaussRegression} with the errors $\varepsilon_i$. Introduce for a function
$u:\mathcal{X}\rightarrow \mathbb{R}$ the empirical norm and
the \emph{empirical inner product}
\begin{align*}
\lVert u\rVert _{(n)}^{2} = \frac{1}{n}\sum
_{i=1}^{n} u(X_{i})^{2}, \qquad
\langle \varepsilon ,u\rangle _{(n)} = \frac{1}{n}\sum
_{i=1}^{n} \varepsilon _{i}
u(X_{i}).
\end{align*}
With this notation, rewrite $R_{\lambda}(\hat\beta )\leq R_{\lambda}(\beta _{*,p})$ as the basic inequality 
\begin{align}
\lVert \hat \mu -\mu _{0}\rVert _{(n)}^{2} +
\lambda \lVert \hat \beta \rVert _{\alpha}^{2} \leq 2\langle
\varepsilon ,\hat\mu - \mu _{*,p}\rangle _{(n)} + \lVert \mu
_{*,p}-\mu _{0}\rVert _{(n)}^{2} +
\lambda \lVert \beta _{*,p}\rVert _{\alpha}^{2}.\label{eq:basic_inequality}
\end{align}
By \eqref{eq:Phi_C1_Rp} and $\alpha >2\bar\alpha $ we have for
the bias the upper bound
\begin{align*}
\lVert \mu _{*,p}-\mu _{0}\rVert ^{2}_{(n)}
&\leq \lVert \mu _{*,p}- \mu _{0}\rVert
^{2}_{L^{\infty}} \lesssim \lVert \beta _{*,p}-\beta
_{0} \rVert ^{2}_{\bar\alpha} =\sum
_{k\geq p+1}k^{2\bar\alpha}\beta ^{2}_{0,k}
\leq p^{-2(\alpha -\bar\alpha )}\lVert \beta _{0}\rVert ^{2}_{\alpha}
\lesssim p^{-2\bar\alpha},
\end{align*}
so for $M_{1}'>0$,
$\lVert \mu _{*,p}-\mu _{0}\rVert _{(n)}\leq 2M_{1}'p^{-\bar\alpha}$. Similar to the proof of Theorem 10.2 in \cite{geer2000empirical} define for $\beta \in \mathbb{R}^{p}$
\begin{align*}
\tilde\beta = (\beta -\beta _{*,p})/(\lVert \beta \rVert _{\alpha}+
\lVert \beta _{0}\rVert _{\alpha}),\quad Z(\beta )=\langle \varepsilon ,\Phi (\tilde\beta )\rangle _{(n)}
\lVert \Phi (\tilde \beta )\rVert ^{1/(2\alpha )-1}_{(n)}.
\end{align*}
We infer from the basic inequality \eqref{eq:basic_inequality}
that either
\begin{align}
\lVert \hat \mu -\mu _{*,p}\rVert _{(n)}^{2}
+ \lambda \lVert \hat \beta \rVert _{\alpha}^{2} \leq 2\lVert
\hat \mu -\mu _{*,p} \rVert ^{1-1/(2\alpha )}_{(n)}\bigl(\lVert
\hat \beta \rVert _{\alpha}+ \lVert \beta _{0}\rVert
_{\alpha}\bigr)^{1/(2\alpha )}|Z(\hat\beta )|, \label{eq:fundEq1} 
\end{align}
or
\begin{align}
\lVert \hat \mu -\mu _{*,p}\rVert _{(n)}^{2}
+ \lambda \lVert \hat \beta \rVert _{\alpha}^{2} \leq 
\lVert \mu _{*,p}- \mu _{0}\rVert
_{(n)}^{2} + \lambda \lVert \beta _{*,p}\rVert
_{
\alpha}^{2}. \label{eq:fundEq2}
\end{align}
Consider first \eqref{eq:fundEq1} and suppose
$\lVert \hat \beta \rVert _{\alpha} > \lVert \beta _{0}\rVert _{
\alpha}$. We obtain the two inequalities 
\begin{align*}
  &\lVert \hat \mu -\mu _{*,p}\rVert _{(n)}
\lesssim \left(\lVert
\hat \beta \rVert _{\alpha}^{1/(2\alpha )}|Z(\hat\beta )|\right)^{1/(1+1/(2\alpha))},\quad \lambda \lVert \hat \beta \rVert _{\alpha}^{2} \lesssim \lVert
\hat \mu -\mu _{*,p} \rVert ^{1-1/(2\alpha )}_{(n)}\lVert
\hat \beta \rVert _{\alpha}^{1/(2\alpha )}|Z(\hat\beta )|. 
\end{align*}
Inserting the first inequality into the second allows for an upper bound on $\lVert \hat\beta\rVert _{\alpha}$ in terms of $\lambda$ and $Z(\hat \beta)$. Using that bound in the first inequality, we find in all
\begin{equation*}
\lVert \hat \mu -\mu _{*,p}\rVert _{(n)}\lesssim \lambda
^{-1/(4
\alpha )} \bigl\llvert Z(\hat\beta ) \bigr\rrvert ,\qquad \lVert \hat \beta
\rVert _{\alpha} \lesssim \lambda ^{(1+1/(2\alpha ))/2} \bigl\llvert Z(\hat\beta
) \bigr\rrvert .
\end{equation*}
We show in Step~4 below that
$\sup_{\beta \in \mathbb{R}^{p}}|Z(\beta )|\lesssim \delta _{n}$ with
high $\mathbb{P}^{n}_{\theta _{0}}$-probability. Therefore, the choice
$\lambda =(\delta _{n}p^{\bar\alpha})^{4\alpha}$ yields on the same high-probability event
\begin{align}
\lVert \hat \mu -\mu _{*,p}\rVert _{(n)}\lesssim
p^{-\bar\alpha}, \qquad \lVert \hat \beta \rVert _{\alpha} \lesssim 1.
\label{eq:initial_3} 
\end{align}
On the other hand, if
$\lVert \hat \beta \rVert _{\alpha} \leq \lVert \beta _{0}\rVert _{
\alpha}$, then solving \eqref{eq:fundEq1} gives
$\lVert \hat \mu -\mu _{*,p}\rVert _{(n)}^{1+1/(2\alpha )} \lesssim |Z(
\hat\beta )|$, and thus \eqref{eq:initial_3} with high
$\mathbb{P}^{n}_{\theta _{0}}$-probability. Finally, in the case of
\eqref{eq:fundEq2}, \eqref{eq:initial_3} holds by the bias bound and
$\lVert \beta _{*,p}\rVert _{\alpha}\leq \lVert \beta _{0}\rVert _{
\alpha}<\infty $, as well as by noting $\delta_n\leq p^{-2\bar \alpha}$ such that $\lambda\leq p^{-\bar \alpha}$. In all, we have shown that for large enough constants
$M'_{1},M'_{2},c_{1}'',c_{2}''>0$
\begin{align*}
\mathbb{P}^{n}_{\theta _{0}} \bigl(\lVert \hat \mu -\mu
_{*,p}\rVert _{(n)} \leq M_{1}'p^{-\bar\alpha},
\lVert \hat \beta \rVert _{\alpha}\leq M_{2}' \bigr)
\geq 1-c_{2}'' e^{-c_{1}''n\delta _{n}^{2}}.
\end{align*}

\emph{Step~2.} As in Step IV of the proof from
\cite[Theorem B.6]{nickl2020a} with minor notational changes we can show
the restricted isometry type bound
\begin{align*}
\mathbb{P}^{n}_{\theta _{0}} \biggl( \biggl\llvert
\frac{\lVert \hat \mu -\mu _{*,p}\rVert ^{2}_{(n)}}{\lVert \hat \mu -\mu _{*,p}\rVert ^{2}_{L^{2}}}-1 \biggr\rrvert \leq \frac{1}{2} \biggr)\geq 1-\bar
c_{2}e^{-\bar c_{1} n
\delta _{n}^{2}}
\end{align*}
for constants $\bar c_{1},\bar c_{2}>0$, noting that the upper bounds on
$|Z_{i}|$ and $\mathbb{E}[Z_{i}^{2}]$ in the proof need to be replaced
by $U=p^{2\bar\alpha}$ using the Sobolev embedding
\eqref{eq:Phi_C1_Rp} such that $n/U\geq n\delta _{n}^{2}$ for
$\bar\alpha >1/2$. As in \cite[(197)]{nickl2020a} we obtain from this and
the previous step constants $c'''_{1},c'''_{2}>0$ such that
\begin{align*}
\mathbb{P}^{n}_{\theta _{0}} \bigl(\lVert \hat \mu -\mu
_{0}\rVert _{L^{2}} \leq 4M_{1}'p^{-\bar\alpha},
\lVert \hat \beta \rVert _{\alpha}\leq M_{2}' \bigr)
\geq 1-c'''_{2}
e^{-c'''_{1}n\delta _{n}^{2}}.
\end{align*}

\emph{Step~3.} The loss function $f=R_{\lambda}$ satisfies
\eqref{eq:stronglyConvex} with curvature and Lipschitz constants
$m_{f}=\lambda $,
\begin{align*}
\Lambda _{f} &= \sup_{v\in \mathbb{R}^{p}:\lVert v\rVert =1}v^{T}
\nabla ^{2} R_{\lambda}(\beta )v=\sup_{v\in \mathbb{R}^{p}:\lVert v
\rVert =1}
\frac{2}{n}\sum_{i=1}^{n}\bigl(\Phi
(v) (X_{i})\bigr)^{2}+2\lambda v^{T} \Sigma
_{\alpha}v.
\end{align*}
By standard results from convex optimisation (e.g.,
\cite[Theorem~1]{dalalyan2017}) we find
$\beta _{\operatorname{init}}\in \mathbb{R}^{p}$ with
$\lVert \beta _{\operatorname{init}}-\hat \beta \rVert \leq M_{1}'p^{-
\bar\alpha}$ after at least $k^{*}$ steps of gradient descent as discussed
above such that
\begin{align}
2\frac{R_{\lambda}(0)-R_{\lambda}(\hat\beta )}{m_{f}} \biggl(1- \frac{m_{f}}{2\Lambda _{f}} \biggr)^{k^{*}}\leq
M_{1}'p^{-\bar\alpha}. \label{eq:initial_5} 
\end{align}
Hence,
\begin{align*}
\lVert \beta _{\operatorname{init}}\rVert _{\bar\alpha} \leq \lVert \beta
_{\operatorname{init}}-\hat \beta \rVert _{\bar\alpha} + \lVert \hat \beta \rVert
_{\alpha}\leq p^{\bar\alpha}\lVert \beta _{
\operatorname{init}}-\hat \beta
\rVert + \lVert \hat \beta \rVert _{
\alpha} \lesssim 1.
\end{align*}
Together with the result in Step~2 obtain \eqref{eq:initial_1}. To finish
the proof we determine a lower bound on $k^{*}$. Let us first find a crude
upper bound on $R_{\lambda}(0)=\sum_{i=1}^{n}Y_{i}^{2}/n$. From
\eqref{eq:expYA} we have
$\mathbb{E}^{n}_{\theta _{0}}[\sum_{i=1}^{n}|Y_{i}|]\leq a_{1}n$ for
$a_{1}>0$. By possibly increasing $a_{1}$, we find from the Bernstein inequality
and \eqref{eq:expYA} for some $a_{2}>0$
\begin{align*}
\mathbb{P}^{n}_{\theta _{0}} \bigl(R_{\lambda}(0)\geq
4a_{1}^{2}n \bigr)\leq \mathbb{P}^{n}_{\theta _{0}}
\Biggl( \Biggl(\sum_{i=1}^{n} \llvert
Y_{i} \rrvert \Biggr)^{2}\geq 4a_{1}^{2}n^{2}
\Biggr) \leq \mathbb{P}^{n}_{\theta _{0}} \Biggl(\sum
_{i=1}^{n}\bigl( \llvert Y_{i} \rrvert -
\mathbb{E}^{n}_{\theta _{0}} \llvert Y_{i} \rrvert \bigr)
\geq a_{1}n\delta _{n} \Biggr)\leq 2e^{-a_{2}n\delta _{n}^{2}}.
\end{align*}
Hence, $R_{\lambda}(\hat\beta )\leq R_{\lambda}(0)\lesssim n$ with high
$\mathbb{P}^{n}_{\theta _{0}}$-probability. Moreover, using the Sobolev
embedding \eqref{eq:Phi_C1_Rp} and
$\lambda =(\delta _{n}p^{\bar\alpha})^{4\alpha}$ we have
$\Lambda _{f}\lesssim p^{2\bar\alpha}+\lambda p^{2\alpha}=\lambda (
\delta _{n}^{2}\lambda ^{1/(2\alpha )-1}+p^{2\alpha})$. Taking logarithms
and rearranging in \eqref{eq:initial_5} shows that it is enough to take
$(1+\log n)n^{2\alpha /(2\alpha +1)}\lesssim k^{*}$.

\emph{Step~4.} We argue now that there exist $C_{1},C_{2},C_{3}>0$ with
\begin{align}
\mathbb{P}^{n}_{\theta _{0}} \Bigl(\sup_{\beta \in \mathbb{R}^{p}}
\bigl\llvert Z( \beta ) \bigr\rrvert \geq C_{1}\delta _{n}
\Bigr)\leq C_{3} e^{-C_{2} n\delta _{n}^{2}}, \label{eq:initial_2} 
\end{align}
following the rescaling argument of
\cite[Chapter~10]{geer2000empirical} for unbounded regression functions.
Using the norm equivalence \eqref{eq:sobolev_equivalence} and
\cite[(4.184)]{gine2016}, together with standard extension properties of
Sobolev norms, we have the metric entropy bound
\begin{equation*}
H \bigl(\bigl\{\Phi (\beta ):\beta \in \mathbb{R}^{p},\lVert \beta
\rVert _{
\alpha}\leq 1\bigr\},\lVert \cdot \rVert _{L^{\infty}},
\varepsilon \bigr) \lesssim \varepsilon ^{-1/\alpha}.
\end{equation*}
Up to a constant, the same entropy bound holds for the shifted and normalised
vectors $\tilde\beta $:
\begin{equation*}
H \bigl(\bigl\{\Phi (\tilde\beta ):\beta \in \mathbb{R}^{p}\bigr\},
\lVert \cdot \rVert _{L^{\infty}},\varepsilon \bigr)\lesssim \varepsilon
^{-1/
\alpha}.
\end{equation*}
The key property is the following uniform boundedness due to the Sobolev
embedding \eqref{eq:SobolevEmbedding} and the normalisation:
\begin{align*}
\sup_{\beta \in \mathbb{R}^{p}}\bigl\lVert \Phi (\tilde\beta )\bigr\rVert
_{L^{
\infty}}\lesssim 1.
\end{align*}
The inequalities in the last two displays also hold with the empirical
norm $\lVert \cdot \rVert _{(n)}$ replacing the
$\lVert \cdot \rVert _{L^{\infty}}$ norm. Finally, the errors
$\varepsilon _{i}$ have exponential moments as discussed in the proof of
Proposition~\ref{prop:ExpoSurrogate}. Arguing as in
\cite[Lemma~8.4]{geer2000empirical}, but replacing Corollary~8.3 there
with Corollary~8.8, proves \eqref{eq:initial_2}.
\end{proof}

\begin{proof}[Proof of Theorem~\ref{thm:density}]
The operators $A:L^{\infty}(\mathcal{X})\rightarrow \mathbb{R}$,
$A(u)=\log \int _{\mathcal{X}}e^{u(x)}\,d\nu _{\mathcal{X}}(x)$, are Fr\'{e}chet
differentiable with derivatives obtained according to the chain rule for
$u,h\in L^{\infty}(\mathcal{X})$ by
\begin{align*}
DA(u)[h] & = \frac{\int _{\mathcal{X}}h(x)e^{u(x)}\,d\nu _{\mathcal{X}}(x)}
{\int _{\mathcal{X}}e^{u(x)}\,d\nu _{\mathcal{X}}(x)},
\\*
D^{2}A(u)[h,h] & = \frac{\int _{\mathcal{X}}h(x)h'(x)e^{u(x)}\,d\nu _{\mathcal{X}}(x)}
{\int _{\mathcal{X}}e^{u(x)}\,d\nu _{\mathcal{X}}(x)}
- \frac{  (\int _{\mathcal{X}}h(x)e^{u(x)}\,d\nu _{\mathcal{X}}(x)  )^{2}}
{  (\int _{\mathcal{X}}e^{u(x)}\,d\nu _{\mathcal{X}}(x)  )^{2}}.
\end{align*}
In particular,
\begin{align}
DA\bigl(\Phi (\theta )\bigr)\bigl[\Phi \bigl(\theta '\bigr)\bigr] &
=\bigl\langle \Phi \bigl(\theta '\bigr),p_{
\theta }\bigr
\rangle _{L^{2}}, \label{eq:Density_Grad} 
\\
D^{2}A\bigl(\Phi (\theta )\bigr)\bigl[\Phi \bigl(\theta '
\bigr),\Phi \bigl(\theta '\bigr)\bigr] & =
\int \Phi \bigl( \theta '\bigr) (x)^{2}p_{\theta}(x)d
\nu _{\mathcal{X}}(x)- \biggl(
\int _{
\mathcal{X}}\Phi \bigl(\theta '\bigr)
(x)p_{\theta}(x)\,d\nu _{\mathcal{X}}(x) \biggr)^{2}
\nonumber\\[-8pt]\label{eq:Density_Hessian} 
\\[-8pt]
& =
\int _{\mathcal{X}} \bigl(\Phi \bigl(\theta '\bigr) (x)-
\bigl\langle \Phi \bigl(\theta '\bigr),p_{
\theta }\bigr\rangle
_{L^{2}} \bigr)^{2}p_{\theta}(x)\,d\nu
_{\mathcal{X}}(x).
\nonumber
\end{align}
Denote by $X$ a generic copy of $Z_{i}=X_{i}$. Observe for
$v\in \mathbb{R}^{p}$ the identities
\begin{align}
\ell (\theta ) & =\Phi (\theta ) (X)-A\bigl(\Phi (\theta )\bigr),\qquad
\mathbb{E}_{\theta _{0}}\ell (\theta )=\bigl\langle \Phi (\theta
),p_{
\theta _{0}}\bigr\rangle _{L^{2}}-A\bigl(\Phi (\theta )\bigr),
\nonumber
\\
v^{\top}\nabla \ell (\theta ) & =\Phi (v) (X)-\bigl\langle \Phi
(v),p_{
\theta }\bigr\rangle _{L^{2}},\qquad \mathbb{E}_{\theta _{0}}v^{\top}
\nabla \ell (\theta )=\bigl\langle \Phi (v),p_{\theta _{0}}-p_{\theta }
\bigr\rangle _{L^{2}},
\nonumber
\\
v^{\top}\nabla ^{2}\ell (\theta )v & =-D^{2}A\bigl(
\Phi (\theta )\bigr)\bigl[\Phi (v), \Phi (v)\bigr]. \label{eq:density_identities} 
\end{align}
Note that $v^{\top}\nabla ^{2}\ell (\theta )v$ is non-random. Due to the
Sobolev embedding \eqref{eq:sobolev_equivalence}, fixing $r>0$ yields a
constant $c_{r}>0$ with
\begin{equation}
\bigl\lVert \Phi (\theta )\bigr\rVert _{L^{\infty}}\leq c_{r},
\quad c_{r}^{-1} \leq p_{\theta}(x)\leq
c_{r}, x\in \mathcal{X},\lVert \theta \rVert _{\alpha}\leq
r. \label{eq:density_bounds} 
\end{equation}
Fix now $\theta $, $\theta '$ with
$\lVert \theta \rVert _{\alpha},\lVert \theta '\rVert _{\alpha}\leq r$.
Since the $e_{k}$ are the eigenfunctions of the Dirichlet Laplacian, it
follows from integrating by parts that
$\langle e_{k},1\rangle _{L^{2}}=0$, so
$\int _{\mathcal{X}}\Phi (\theta ')\,d\nu _{\mathcal{X}}=0$. Let
$c_{X}=\nu _{X}(\mathcal{X})$. Then, we have by (\ref{eq:density_bounds})
and the Cauchy-Schwarz inequality
\begin{equation*}
\int _{\mathcal{X}} \bigl(\Phi \bigl(\theta '\bigr) (x)-
\bigl\langle \Phi \bigl(\theta '\bigr),p_{
\theta }\bigr\rangle
_{L^{2}} \bigr)^{2}\,d\nu _{\mathcal{X}}(x)\leq \bigl\lVert
\Phi \bigl(\theta '\bigr)\bigr\rVert _{L^{2}}^{2}+c_{X}
\bigl\langle \Phi \bigl(\theta '\bigr),p_{
\theta }\bigr\rangle
_{L^{2}}^{2}\leq \bigl\lVert \theta '\bigr\rVert
^{2}+c_{X}c_{r} \bigl\lVert \theta
'\bigr\rVert ^{2},
\end{equation*}
and therefore
\begin{align}
c_{r}^{-1}\bigl\lVert \theta '\bigr\rVert
^{2} & \leq D^{2}A\bigl(\Phi (\theta )\bigr)\bigl[ \Phi
\bigl(\theta '\bigr),\Phi \bigl(\theta '\bigr)\bigr]\leq
c_{r}(1+c_{X}c_{r})\bigl\lVert \theta
'\bigr\rVert ^{2}. \label{eq:Density_Hessian_UpperLower} 
\end{align}
On the other hand, it follows from (\ref{eq:density_bounds}) and (\ref{eq:Density_Grad})
that $A(\Phi (\cdot ))$ is uniformly Lipschitz on the set
$\{\theta :\lVert \theta \rVert _{\alpha}\leq r\}$, because
\begin{align}
\begin{aligned}
\bigl\llvert A\bigl(\Phi (\theta )\bigr)-A\bigl(\Phi \bigl(\theta '
\bigr)\bigr) \bigr\rrvert & = \biggl\llvert
\int _{0}^{1}DA\bigl( \Phi \bigl(\theta
'+t\bigl(\theta -\theta '\bigr)\bigr)\bigr)\bigl[\Phi
\bigl(\theta -\theta '\bigr)\bigr]\,dt \biggr\rrvert
\\
& \leq \sup_{0\leq t\leq 1}\lVert p_{\theta '+t(\theta -\theta ')} \rVert
_{L^{\infty}}\bigl\lVert \Phi \bigl(\theta -\theta '\bigr)\bigr
\rVert _{L^{2}} \leq c_{r}\bigl\lVert \theta -\theta
'\bigr\rVert .
\end{aligned}
\label{eq:Density_Lipschitz} 
\end{align}
With these preparations let us verify the conditions of Theorem~\ref{thm:localisation} and Theorem~\ref{thm:GaussianContraction}.

\emph{Theorem~\ref{thm:GaussianContraction}(i).}\textbf{ }Consider
$r>0$ and $\theta \in \mathcal{B}_{n,r}$. It follows for
$\theta _{0}\in h^{\alpha}(\mathbb{N})$
\begin{align*}
\mathbb{E}_{\theta _{0}} \bigl(\ell (\theta _{0})-\ell (\theta )
\bigr) & =\bigl\langle \Phi (\theta _{0}-\theta ),p_{\theta _{0}}
\bigr\rangle _{L^{2}}- \bigl(A\bigl(\Phi (\theta _{0})\bigr)-A
\bigl(\Phi (\theta )\bigr) \bigr)
\\
& =DA\bigl(\Phi (\theta _{0})\bigr)\bigl[\Phi (\theta
_{0}-\theta )\bigr]- \bigl(A\bigl(\Phi ( \theta _{0})
\bigr)-A\bigl(\Phi (\theta )\bigr) \bigr)
\\
& =
\int _{0}^{1}(1-t)D^{2}A\bigl(\Phi
\bigl(\theta '+t\bigl(\theta -\theta '\bigr)\bigr)\bigr)
\bigl[ \Phi \bigl(\theta -\theta '\bigr),\Phi \bigl(\theta -\theta
'\bigr)\bigr]\,dt.
\end{align*}
The bias condition
$\lVert \theta _{0}-\theta _{*,p}\rVert \leq C\delta _{n}$ and the triangle
inequality imply
$\lVert \theta -\theta _{0}\rVert \lesssim \delta _{n}$. The first part
of inequality \eqref{eq:ApproxCondition} follows then from
\eqref{eq:Density_Hessian_UpperLower}. For the second part note that for
$q\geq 2$ and a constant $c_{A}$ by (\ref{eq:Density_Lipschitz})
\begin{align*}
\mathbb{E}_{\theta _{0}} \bigl\llvert \ell (\theta )-\ell (\theta
_{0}) \bigr\rrvert ^{q} &\leq c_{A}^{q-2}
\mathbb{E}_{\theta _{0}} \bigl\llvert \ell ( \theta )-\ell (\theta
_{0}) \bigr\rrvert ^{2}
\\
&  \lesssim c_{A}^{q-2} \bigl(\mathbb{E}_{\theta _{0}}
\bigl\llvert \Phi (\theta ) (X)-\Phi (\theta _{0}) (X) \bigr\rrvert
^{2}+ \bigl\llvert A\bigl(\Phi ( \theta )\bigr)-A\bigl(\Phi (\theta
_{0})\bigr) \bigr\rrvert ^{2} \bigr)
\\
&  \lesssim c_{A}^{q-2} \bigl(\bigl\lVert \Phi (
\theta -\theta _{0}) \bigr\rVert _{L^{2}}^{2}+\lVert
\theta -\theta _{0}\rVert ^{2} \bigr) \lesssim
c_{A}^{q-2}\lVert \theta -\theta _{0}\rVert
^{2}\lesssim c_{A}^{q-2} \delta
_{n}^{2}.
\end{align*}

\emph{Theorem~\ref{thm:GaussianContraction}(ii).}\textbf{ }Let $r>0$,
$\lVert \theta \rVert _{\alpha},\lVert \theta '\rVert _{\alpha}\leq r$. Arguing as in the
proof of the corresponding statement in Proposition~\ref{prop:ExpoSurrogate} we get $h^{2}(\theta ,\theta ')=2-2e^{-x}$ for a random variable $x$. Upper and lower bounding this non-random quantity gives by
\eqref{eq:Density_Hessian_UpperLower}
\begin{equation*}
(c_{r}^{-1}/2)\bigl\lVert \theta -\theta '\bigr
\rVert ^{2}\leq x\leq (c_{r}/2)(1+c_{X}c_{r})
\bigl\lVert \theta -\theta '\bigr\rVert ^{2}\leq
(c_{r}/2)(1+c_{X}c_{r}) (2r)^{2}.
\end{equation*}
Conclude by \eqref{eq:Birge}.

\emph{Theorem~\ref{thm:localisation}(ii).} Let
$\theta \in \mathcal{B}$, $v\in \mathbb{R}^{p}$ with
$\lVert v\rVert =1$. We have by the Sobolev embedding
\eqref{eq:Phi_C1} and \eqref{eq:GLM_conditions_1} the inequality
$\lVert \Phi (\theta )\rVert _{L^{\infty}}\lesssim 1$ and thus
$\sup_{\theta \in \mathcal{B}}\lVert D^{2}A(\Phi (\theta ))[\Phi (v),
\Phi (v)]\rVert _{L^{\infty}}<\infty $. It follows from the identities
\eqref{eq:density_identities}, \eqref{eq:Density_Hessian_UpperLower} and
the Sobolev embedding \eqref{eq:Phi_C1_Rp} for $\bar\alpha >1/2$
\begin{align*}
\bigl\llvert \mathbb{E}_{\theta _{0}}v^{\top}\nabla \ell (\theta
_{*,p}) \bigr\rrvert & = \bigl\llvert \bigl\langle \Phi
(v),p_{\theta _{0}}-p_{\theta _{*,p}}\bigr\rangle _{L^{2}} \bigr\rrvert
\leq \lVert p_{\theta _{0}}-p_{\theta _{*,p}}\rVert _{L^{2}},
\\
\bigl\llvert v^{\top}\nabla \ell (\theta ) \bigr\rrvert & \leq 2\bigl
\lVert \Phi (v)\bigr\rVert _{L^{
\infty}}\lesssim p^{\bar\alpha},
\\
\bigl\llvert v^{\top}\nabla ^{2}\ell (\theta )v \bigr\rrvert &
\lesssim 1.
\end{align*}
As in \eqref{eq:density_bounds} the log-likelihood function is uniformly
upper and lower bounded on $\mathcal{B}$, so
\begin{equation}
\lVert p_{\theta _{0}}-p_{\theta _{*,p}}\rVert _{L^{2}}^{2}
\lesssim \mathbb{E}_{\theta _{0}} \bigl\llvert e^{\ell (\theta _{0})}-e^{\ell (
\theta _{*,p})}
\bigr\rrvert ^{2}\lesssim \mathbb{E}_{\theta _{0}} \bigl\llvert \ell
(\theta _{0})-\ell (\theta _{*,p}) \bigr\rrvert
^{2}\lesssim \lVert \theta _{0}-\theta _{*,p}
\rVert \lesssim \delta _{n}^{2}. \label{eq:Density_pDiff} 
\end{equation}
Together with the last display this means for $q\geq 2$ and a possibly
different constant $c_{A}>0$
\begin{align*}
\mathbb{E}_{\theta _{0}} \bigl\llvert v^{\top}\nabla \ell (\theta
_{*,p}) \bigr\rrvert ^{q} & \leq \bigl\lVert
v^{\top }\nabla \ell (\theta _{*,p})\bigr\rVert
_{L^{\infty}}^{q-2} \mathbb{E}_{\theta _{0}} \bigl\llvert
v^{\top}\nabla \ell (\theta _{*,p}) \bigr\rrvert ^{2}
\\
& \leq c_{A}^{q-2}p^{\bar\alpha (q-2)}\bigl(2\bigl\lVert \Phi
(v)\bigr\rVert _{L^{2}}^{2}+2 \bigl\langle \Phi
(v),p_{\theta _{*,p}}\bigr\rangle ^{2}\bigr)\lesssim
c_{A}^{q-2}p^{
\bar\alpha (q-2)},
\\
\mathbb{E}_{\theta _{0}} \bigl\llvert v^{\top}\nabla ^{2}
\ell (\theta )v \bigr\rrvert ^{q} & \leq c_{A}^{q}.
\end{align*}
At last, if $\theta '\in \mathcal{B}$, then
\begin{align*}
\bigl\llvert v^{\top}\bigl(\nabla ^{2}\ell (\theta )-\nabla
^{2}\ell \bigl(\theta '\bigr)\bigr)v \bigr\rrvert &\lesssim
\int _{\mathcal{X}} \bigl(\Phi (v) (x)-\bigl\langle \Phi
(v),p_{
\theta }\bigr\rangle _{L^{2}} \bigr)^{2} \llvert
p_{\theta}-p_{\theta '} \rrvert (x)\,d\nu _{\mathcal{X}}(x)
\\
& \quad{} +
\int _{\mathcal{X}} \bigl\llvert \bigl(\Phi (v) (x)-\bigl\langle \Phi
(v),p_{\theta }\bigr\rangle _{L^{2}} \bigr)^{2}- \bigl(
\Phi (v) (x)- \bigl\langle \Phi (v),p_{\theta '}\bigr\rangle _{L^{2}}
\bigr)^{2} \bigr\rrvert d \nu _{\mathcal{X}}(x)
\\
&  \lesssim \bigl\lVert \Phi (v)\bigr\rVert _{L^{\infty}}\bigl\lVert
\Phi (v) \bigr\rVert _{L^{2}}\lVert p_{\theta }-p_{\theta '}
\rVert _{L^{2}}
\\
& \quad{} + \bigl\llvert \bigl\langle \Phi (v),p_{\theta '}-p_{\theta }
\bigr\rangle _{L^{2}} \bigr\rrvert
\int _{\mathcal{X}} \bigl\llvert 2\Phi (v) (x)-\bigl\langle \Phi
(v),p_{\theta }+p_{
\theta '}\bigr\rangle _{L^{2}} \bigr\rrvert
p_{\theta '}(x)\,d\nu _{\mathcal{X}}(x)
\\
&  \lesssim \lVert v\rVert _{\bar\alpha}\bigl\lVert \theta -\theta
' \bigr\rVert + \bigl\llvert \bigl\langle \Phi (v),p_{\theta '}-p_{\theta }
\bigr\rangle _{L^{2}} \bigr\rrvert \lesssim p^{\bar\alpha}\bigl\lVert
\theta -\theta '\bigr\rVert .
\end{align*}
We therefore verify the conditions in Theorem~\ref{thm:localisation}(ii)
with $\kappa _{1}=0$, $\kappa _{2}=\bar\alpha $,
$\kappa _{4}=\bar\alpha $.

\emph{Theorem~\ref{thm:localisation}(iii).} Use the identities (\ref{eq:density_identities})
and (\ref{eq:Density_Hessian_UpperLower}) to obtain the result with
$\kappa _{3}=0$ from
\begin{equation*}
\inf_{\theta \in \mathcal{B}}\mathbb{E}_{\theta _{0}} \bigl[-v^{\top}
\nabla ^{2}\ell (\theta )v \bigr]=\inf_{\theta \in \mathcal{B}}D^{2}A
\bigl( \Phi (\theta )\bigr)\bigl[\Phi (v),\Phi (v)\bigr]\gtrsim \lVert v\rVert
^{2}=1.
\end{equation*}

\emph{Theorem~\ref{thm:localisation}(iv).} It is easy to check that the
growth conditions and the lower bound on $\eta $ from Assumption~\ref{assu:a1} hold for all large enough $n$.
\end{proof}

\subsection{Darcy's problem}
\label{subsec:Darcy's-problem}

We recall first some facts for Sobolev spaces and relevant analytical properties
of the PDE (\ref{eq:PDE}). We then proceed by giving a stability estimate
for the forward operator $\mathcal{G}$ based on the condition
\eqref{eq:Darcy_stabilityCondition}, analyse further analytical properties
of $\mathcal{G}$ and conclude with the proof of Theorem~\ref{Thm:Darcy}. Details for the first two subsections can be found in
\citep[Section~5A]{taylormichael2010}. In the following,
$\nabla _{\theta}$ and $\nabla ^{2}_{\theta}$ denote the gradient and Hessian
with respect to the parameter $\theta $ and $\nabla $, $\nabla ^{2}$ with
respect to the space variable.

\subsubsection{Sobolev spaces}
\label{subsec:Sobolev}

For multi-indices $i=(i_{1},\dots ,i_{d})$ let $D^{i}$ be the weak partial
derivative operators. Denote the classical $L^{2}(\mathcal{X})$-Sobolev
spaces of integer order $s\geq 0$ by
\begin{equation*}
H^{s}(\mathcal{X})= \biggl\{ w\in L^{2}(\mathcal{X}):
\lVert w\rVert _{H^{s}}^{2}= \sum_{|i|\leq s}
\bigl\lVert D^{i}w\bigr\rVert _{L^{2}}^{2}<\infty
\biggr\} .
\end{equation*}
They satisfy a Sobolev embedding
\citep[Proposition~4.3]{taylormichael2010},
\begin{equation}
H^{s}(\mathcal{X})\subset C^{k}(\mathcal{X}),\quad
s>k+d/2. \label{eq:SobolevEmbedding} 
\end{equation}
Let $H_{0}^{s}(\mathcal{X})$ be the subspace of functions in
$H^{s}(\mathcal{X})$ that vanish on the boundary of $\mathcal{X}$ in the
trace sense. Their topological dual spaces are denoted by
$(H_{0}^{s}(\mathcal{X}))^{*}$. Another scale of Sobolev spaces
$\tilde{H}^{s}(\mathcal{X})$ is induced by the eigensystem
$(\lambda _{k},e_{k})_{k=1}^{n}$ of the negative Dirichlet Laplacian, where
\begin{equation*}
\tilde{H}^{s}(\mathcal{X})= \Biggl\{ f\in L^{2}(
\mathcal{X}):\lVert f \rVert _{\tilde{H}^{s}}^{2}=\sum
_{k=1}^{\infty}\lambda _{k}^{s}
\langle f,e_{k}\rangle _{L^{2}}^{2}<\infty \Biggr\} ,
\end{equation*}
which is equipped with the inner product
\begin{equation*}
\langle f,g\rangle _{\tilde{H}^{s}}=\sum_{k=1}^{\infty}
\lambda _{k}^{s} \langle f,e_{k}\rangle
_{L^{2}}\langle g,e_{k}\rangle _{L^{2}}.
\end{equation*}
Due to the presence of a boundary they generally differ from the Sobolev
spaces $H^{s}(\mathcal{X})$, but it can be shown that
\begin{equation}
\tilde{H}^{s}(\mathcal{X})=H_{0}^{s}(
\mathcal{X}),\quad s=1,2,\qquad \tilde{H}^{s}(\mathcal{X})\subset
H_{0}^{s}(\mathcal{X}),\quad s\in \mathbb{N},
\label{eq:normEquiv} 
\end{equation}
and the $\lVert \cdot \rVert _{H^{s}}$- and
$\lVert \cdot \rVert _{\tilde{H}^{s}}$-norms are equivalent on
$H^{s}(\mathcal{X})$. By Weyl's law the eigenvalues satisfy for
$0<c_{1}<c_{2}<\infty $
\begin{equation*}
c_{1}k^{2/d}\leq \llvert \lambda _{k} \rrvert
\leq c_{2}k^{2/d},\quad k\geq 1,
\end{equation*}
and hence the map
$\Phi :h^{s/d}(\mathbb{N})\rightarrow \tilde{H}^{s}(\mathcal{X})$,
$\Phi (\theta )=\sum_{k=1}^{\infty}\theta _{k}e_{k}$ is an isomorphism
with
\begin{equation}
c_{1}\lVert \theta \rVert _{s/d}^{2}\leq \bigl
\lVert \Phi (\theta )\bigr\rVert _{H^{s}}^{2} \leq
c_{2}\lVert \theta \rVert _{s/d}^{2}.
\label{eq:sobolev_equivalence} 
\end{equation}
It follows from the last three displays and the Sobolev embedding
\eqref{eq:SobolevEmbedding} that for $\gamma >k+d/2$,
$k\in \mathbb{N}\cup \{0\}$,
%
\begin{equation}
\bigl\lVert \Phi (\theta )\bigr\rVert _{C^{k}}\lesssim \bigl\lVert \Phi
(\theta ) \bigr\rVert _{H^{\gamma}}\lesssim \lVert \theta \rVert
_{\gamma /d}. \label{eq:Phi_C1} 
\end{equation}
In particular, if $\theta \in \mathbb{R}^{p}$, then
\begin{equation}
\bigl\lVert \Phi (\theta )\bigr\rVert _{C^{k}}\lesssim p^{\gamma /d}
\lVert \theta \rVert . \label{eq:Phi_C1_Rp} 
\end{equation}

\subsubsection{Some PDE facts}
\label{sec:PDE_facts}

For $f\in C^{1}(\mathcal{X})$ the divergence form operator
$\mathcal{L}_{f}$ takes functions in $H_{0}^{2}(\mathcal{X})$ to
$L^{2}(\mathcal{X})$. If $f$ is strictly positive on $\mathcal{X}$, then
it has (e.g., by \citep[Theorem~6.3.4]{evans2010}) a linear, continuous
inverse operator
$\mathcal{L}_{f}^{-1}:L^{2}(\mathcal{X})\rightarrow H_{0}^{2}(
\mathcal{X})$. In particular, we have
\begin{equation}
g_{2}=0\Rightarrow u_{f}=G(f)=\mathcal{L}_{f}^{-1}g_{1}.
\label{eq:G} 
\end{equation}
We require the following quantitative elliptic regularity estimates with
explicit constants depending on the conductivity.

\begin{lem}
\label{lem:L}%
Let $f\in C^{\gamma +1}(\mathcal{X})$, $\gamma \geq 0$, and
$w\in H^{\gamma +2}(\mathcal{X})$. Then
\begin{equation*}
\lVert \mathcal{L}_{f}w\rVert _{H^{\gamma}}\leq 2\lVert f\rVert
_{C^{
\gamma +1}}\lVert w\rVert _{H^{\gamma +2}}.
\end{equation*}
\end{lem}

\begin{proof}
It suffices to note that
\begin{equation*}
\lVert \mathcal{L}_{f}w\rVert _{H^{\gamma}}=\lVert f\Delta w+
\nabla f \cdot \nabla w\rVert _{H^{\gamma}}\leq 2\lVert f\rVert
_{C^{\gamma +1}} \lVert w\rVert _{H^{\gamma +2}}.
\end{equation*}\qedhere
\end{proof}

\begin{lem}
\label{lem:Linv}%
For $c>0$ consider $f\in C^{1}(\mathcal{X})$ with
$f\geq f_{\operatorname{min}}$, $\lVert f\rVert _{C^{1}}\leq c$. There
exists a constant $C\equiv C(f_{\operatorname{min}},c)$ such that the following
statements hold:
\begin{enumerate}[label=(\roman*)]
\item[{(i)}] $w\in L^{2}(\mathcal{X})$:
$\lVert \mathcal{L}_{f}^{-1}w\rVert _{H^{2}}\leq C\lVert w\rVert _{L^{2}}$,
\item[{(ii)}] $w\in (H_{0}^{2})^{*}(\mathcal{X})$:
$\lVert \mathcal{L}_{f}^{-1}w\rVert _{L^{2}}\leq C\lVert w\rVert _{(H_{0}^{2})^{*}}$,
\item[{(iii)}] $w\in H^{1}(\mathcal{X};\mathbb{R}^{d})$:
$\lVert \mathcal{L}_{f}^{-1}(\nabla \cdot w)\rVert _{H^{1}}\leq C
\lVert w\rVert _{L^{2}}$.
\end{enumerate}
\end{lem}
\begin{proof}
Parts (i) and (ii) follow from \citep[Lemmas 21 and 23]{nickl2020}. For
(iii) use duality to find for $z\in H_{0}^{1}(\mathcal{X})$
\begin{align*}
\lVert \mathcal{L}_{f}z\rVert _{(H_{0}^{1})^{*}} & =\sup
_{\varphi
\in H_{0}^{1},\lVert \varphi \rVert _{H^{1}}\leq 1} \bigl\llvert \langle \mathcal{L}_{f}z,
\varphi \rangle _{L^{2}} \bigr\rrvert \geq \bigl\llvert \langle
\mathcal{L}_{f}z,z\rangle _{L^{2}} \bigr\rrvert \lVert z\rVert
_{H^{1}}^{-1}
\\
& =\langle f\nabla z,\nabla z\rangle _{L^{2}}\lVert z\rVert
_{H^{1}}^{-1} \gtrsim \lVert \nabla z\rVert
_{L^{2}}^{2}\lVert z\rVert _{H^{1}}^{-1}
\gtrsim \lVert z\rVert _{H^{1}},
\end{align*}
concluding by the Poincar\'{e} inequality. Applying this to
$z=\mathcal{L}_{f}^{-1}(\nabla \cdot w)$ for
$w\in H^{1}(\mathcal{X};\mathbb{R}^{d})$ yields
\begin{equation*}
\bigl\lVert \mathcal{L}_{f}^{-1}(\nabla \cdot w)\bigr
\rVert _{H^{1}}\lesssim \lVert \nabla \cdot w\rVert _{(H_{0}^{1})^{*}}.
\end{equation*}
Since the partial derivative operators are bounded operators from
$H^{1}(\mathcal{X})$ to $L^{2}(\mathcal{X})$, the result follows by duality,
the divergence theorem and the Cauchy-Schwarz inequality such that
\begin{equation*}
\lVert \nabla \cdot w\rVert _{(H_{0}^{1})^{*}} =\sup_{\varphi \in H_{0}^{1}(
\mathcal{X}),\lVert \varphi \rVert _{H^{1}}\leq 1}
\biggl\llvert
\int _{
\mathcal{X}}(-\nabla \cdot \varphi )w\,d\nu _{\mathcal{X}}
\biggr\rrvert \leq \sup_{\varphi \in L^{2}(\mathcal{X}),\lVert \varphi \rVert _{L^{2}}
\leq 1} \biggl\llvert
\int _{\mathcal{X}}\varphi w\,d\nu _{\mathcal{X}} \biggr\rrvert =
\lVert w\rVert _{L^{2}}.
\end{equation*}\qedhere
\end{proof}
\begin{lem}
\label{lem:boundedSolutions}%
Let $\gamma >k+d/2$ for $k\in \mathbb{N}\cup \{0\}$ and let
$f\in H^{\gamma}(\mathcal{X})$, $f\geq f_{\operatorname{min}}$. For all
$c>0$ there exists a constant
$C\equiv C(\gamma ,f_{\operatorname{min}},\mathcal{X},g_{1},g_{2},c)$ such
that
\begin{equation*}
\sup_{\lVert f\rVert _{H^{\gamma}}\leq c}\lVert u_{f}\rVert _{H^{
\gamma +1}}\leq
C,\qquad \sup_{\lVert f\rVert _{H^{\gamma}}\leq c} \lVert u_{f}\rVert
_{C^{k+1}}\leq C.
\end{equation*}
\end{lem}
\begin{proof}
If $u_{f}=g_{2}=0$ on $\partial X$, then \citep[Lemma~23]{nickl2020} shows
the first inequality for a constant
$C\equiv C(\gamma ,f_{\operatorname{min}},\mathcal{X},g_{1})$ and the second
one follows from the Sobolev embedding
$H^{\gamma}(\mathcal{X})\subset C^{k}(\mathcal{X})$. For general
$g_{2}\in C^{\infty}(\partial \mathcal{X})$ we can assume without loss
of generality that it extends to a function in
$C^{\infty}(\mathcal{X})$ (e.g., by taking $g_{2}$ as the solution of the
PDE (\ref{eq:PDE}) for the standard Laplacian with $f\equiv 1$,
$g_{1}=0$, which is smooth, cf. \citep[Theorem~8.14]{gilbarg2001}) and
note that $\bar{u}_{f}=u_{f}-g_{2}$ solves the PDE (\ref{eq:PDE}) with
right-hand side $g_{1}=f-\mathcal{L}_{f}g_{2}$ and $\bar{u}_{f}=0$ on
$\partial \mathcal{X}$. Then what has been shown so far applies to
$\bar{u}_{f}$, and we obtain the second inequality (and thus also the first)
with
\begin{equation*}
\sup_{\lVert f\rVert _{H^{\gamma}}\leq c}\lVert u_{f}\rVert _{C^{k+1}}
\leq \sup_{\lVert f\rVert _{H^{\gamma}}\leq c}\lVert \bar{u}_{f} \rVert
_{C^{k+1}}+\lVert g_{2}\rVert _{C^{k+1}}\leq C+\lVert
g_{2} \rVert _{C^{k+1}}.
\end{equation*}\qedhere
\end{proof}

\subsubsection{A stability estimate}
\label{sec:stability}

Here we derive a stability lower bound on the difference of solutions
$u_{f}-u_{f}'$ from the condition \eqref{eq:Darcy_stabilityCondition}.

\begin{lem}
\label{lem:DarcyCondition}%
Let $f,f'\in C^{1}(\mathcal{X})$ with $f=f'$ on
$\partial \mathcal{X}$ and
$\lVert f\rVert _{C^{1}},\lVert f'\rVert _{C^{1}}\leq c$ for some
$c>0$ and suppose for $\mu $, $c'>0$ that
\begin{equation}
\inf_{x\in \mathcal{X}} \biggl(\frac{1}{2}\Delta
u_{f}(x)+\mu \bigl\lVert \nabla u_{f}(x)\bigr\rVert
_{\mathbb{R}^{d}}^{2} \biggr)\geq c'. \label{eq:Darcy_stability}
\end{equation}
Then there exists a constant
$C\equiv C(\gamma ,f_{\operatorname{min}},\mathcal{X},g_{1},g_{2},c)>0$
such that
\begin{enumerate}[label=(\roman*)]
\item[{(i)}] $h\in H_{0}^{1}(\mathcal{X})$:
$\lVert \mathcal{L}_{h}u_{f}\rVert _{L^{2}}\geq C\lVert h\rVert _{L^{2}}$,
\item[{(ii)}]
$\lVert f-f'\rVert _{L^{2}}\leq C\lVert u_{f}-u_{f'}\rVert _{H^{2}}$.\vadjust{\goodbreak}
\end{enumerate}
\end{lem}

\begin{proof}
For $h\in C_{c}^{\infty}(\mathcal{X})$ the claim in (i) follows from
\citep[Lemma~1]{nickl2022a} (it is easy to check that the requirement
$f\in C^{\infty}(\mathcal{X})$ in the proof can be reduced to
$f\in C^{1}(\mathcal{X})$), and extends by taking limits to
$h\in H_{0}^{1}(\mathcal{X})$. For (ii) the proof of
\citep[Proposition~3]{nickl2022a} applies: Take
$h=f-f'\in H_{0}^{1}(\mathcal{X})$ such that
$\mathcal{L}_{h}u_{f}=\mathcal{L}_{f'}(u_{f'}-u_{f})$ (cf. (\ref{eq:GDiff})
below) and hence by (i) and Lemma~\ref{lem:L}
\begin{equation*}
\bigl\lVert f-f'\bigr\rVert _{L^{2}}=\lVert h\rVert
_{L^{2}}\lesssim \lVert \mathcal{L}_{h}u_{f}
\rVert _{L^{2}}\lesssim \lVert u_{f}-u_{f'} \rVert
_{H^{2}}.
\end{equation*}\qedhere
\end{proof}

The stability bound is next extended to the forward operators.

\begin{lem}
\label{lem:Darcy_stability}%
Let $\theta ,\theta '\in h^{\alpha}(\mathbb{N})$, $d\alpha >1+d/2$, with
$\lVert \theta \rVert _{\alpha},\lVert \theta '\rVert _{\alpha}\leq c$
for some $c>0$, and suppose that (\ref{eq:Darcy_stability}) holds for
$f=f_{\theta}$. Then there exists a constant
$C\equiv C(\alpha ,f_{\operatorname{min}},\mathcal{X},g_{1},g_{2},c)>0$
such that
\begin{equation*}
\bigl\lVert \theta -\theta '\bigr\rVert ^{\beta}\leq C\bigl
\lVert \mathcal{G}( \theta )-\mathcal{G}\bigl(\theta '\bigr)\bigr
\rVert _{L^{2}},\qquad \beta = \frac{\alpha +d}{\alpha -d}.
\end{equation*}
\end{lem}

\begin{proof}
Let $\gamma =d\alpha $ such that $\beta =(\gamma +1)/(\gamma -1)$. Use
first $x\le e^{x}-1$ for $x\geq 0$ and (\ref{eq:Phi_C1}) to the extent
that
\begin{equation*}
\bigl\lVert \theta -\theta '\bigr\rVert =\bigl\lVert \Phi (\theta
)-\Phi \bigl(\theta '\bigr) \bigr\rVert _{L^{2}}\leq \bigl
\lVert e^{\Phi (\theta ')}\bigr\rVert _{L^{\infty}} \bigl\lVert
e^{\Phi (\theta )}-e^{\Phi (\theta ')}\bigr\rVert _{L^{2}}\lesssim \lVert
f_{\theta }-f_{\theta '}\rVert _{L^{2}}.
\end{equation*}
Apply Lemma~\ref{lem:DarcyCondition}(ii) to the last term. To conclude
observe for
$w=u_{f_{\theta}}-u_{f_{\theta '}}=\mathcal{G}(\theta )-\mathcal{G}(
\theta ')$ that $\lVert w\rVert _{H^{\gamma +1}}\leq C$ for a constant
depending on
$\alpha $, $f_{\operatorname{min}}$, $\mathcal{X}$, $g_{1}$, $g_{2}$, $c$ by
Lemma~\ref{lem:boundedSolutions} and (\ref{eq:Phi_C1}), and that by an interpolation
inequality for Sobolev spaces
\citep[Theorems 1.15 and 1.35]{yagi2009}
\begin{equation*}
\lVert w\rVert _{H^{2}}\lesssim \lVert w\rVert _{L^{2}}^{(\gamma -1)/(
\gamma +1)}
\lVert w\rVert _{H^{\gamma +1}}^{2/(\gamma +1)}\lesssim \lVert w\rVert
_{L^{2}}^{1/\beta}.
\end{equation*}\qedhere
\end{proof}

\subsubsection{Analytical properties of the forward operator}
\label{sec:analytical}

We study the forward operator $\mathcal{G}$ with the aim of verifying Assumption~\ref{assu:GLM_local_reg}. Note from the Sobolev embedding
\eqref{eq:Phi_C1} for $\lVert \theta \rVert _{\gamma /d}\leq r$ and a constant
$C_{r}>0$
\begin{equation}
\lVert f_{\theta }\rVert _{C^{k}}\leq f_{\operatorname{min}}+\bigl
\lVert e^{
\Phi (\theta )}\bigr\rVert _{C^{k}}\leq C_{r}.
\label{eq:f_theta_C1} 
\end{equation}

\begin{lem}
\label{lem:G_Lipschitz}%
Let $\theta ,\theta '\in h^{\alpha}(\mathbb{N})$, $d\alpha >1+d/2$, with
$\lVert \theta \rVert _{\alpha},\lVert \theta '\rVert _{\alpha}\leq c$
for some $c>0$. Then there exists a constant $C\equiv C(\alpha ,c)$ such
that
\begin{equation*}
\bigl\lVert \mathcal{G}(\theta )-\mathcal{G}\bigl(\theta '\bigr)
\bigr\rVert _{L^{2}}\leq C \bigl\lVert \theta -\theta '\bigr
\rVert .
\end{equation*}
\end{lem}

\begin{proof}
We have
$(\mathcal{G}(\theta )-\mathcal{G}(\theta '))(x)=g_{2}(x)-g_{2}(x)=0$ for
$x\in \partial X$ such that
$\mathcal{G}(\theta )-\mathcal{G}(\theta ')\in H_{0}^{2}(\mathcal{X})$
by Lemma~\ref{lem:boundedSolutions}, and
\begin{align*}
\mathcal{L}_{f_{\theta '}} \bigl(\mathcal{G}(\theta )-\mathcal{G}\bigl( \theta
'\bigr) \bigr) & =(\mathcal{L}_{f_{\theta '}}-\mathcal{L}_{f_{
\theta}})
\mathcal{G}(\theta )+\mathcal{L}_{f_{\theta}}\mathcal{G}( \theta )-
\mathcal{L}_{f_{\theta '}}\mathcal{G}\bigl(\theta '\bigr)
\\
& =\mathcal{L}_{f_{\theta '}-f_{\theta}}\mathcal{G}(\theta )+g_{1}-g_{1}=
\mathcal{L}_{f_{\theta '}-f_{\theta}}\mathcal{G}(\theta ).
\end{align*}
This allows for applying $\mathcal{L}_{f_{\theta '}}^{-1}$, and we get
\begin{equation}
\mathcal{G}(\theta )-\mathcal{G}\bigl(\theta '\bigr)=
\mathcal{L}_{f_{\theta '}}^{-1} \mathcal{L}_{f_{\theta '}-f_{\theta}}\mathcal{G}(
\theta ). \label{eq:GDiff} 
\end{equation}
Lemma~\ref{lem:Linv} combined with (\ref{eq:f_theta_C1}) yields
\begin{align*}
& \bigl\lVert \mathcal{G}(\theta )-\mathcal{G}\bigl(\theta '\bigr)
\bigr\rVert _{L^{2}} \leq C_{r}\bigl\lVert
\mathcal{L}_{f_{\theta '}-f_{\theta }}\mathcal{G}( \theta )\bigr\rVert _{(H_{0}^{2})^{*}}.
\end{align*}
By duality, the divergence theorem and writing
$\mathcal{G}(\theta )=u_{f_{\theta}}$ the last term equals
\begin{align*}
\sup_{\varphi \in H_{0}^{2},\lVert \varphi \rVert _{H^{2}}\leq 1} \biggl\llvert
\int _{\mathcal{X}}\varphi \nabla \cdot (f_{\theta '}-f_{
\theta})
\nabla u_{f_{\theta}} \biggr\rrvert & =\sup_{\varphi \in H_{0}^{2},
\lVert \varphi \rVert _{H^{2}}\leq 1} \biggl
\llvert
\int _{\mathcal{X}}(f_{
\theta '}-f_{\theta})\nabla \varphi
\cdot \nabla u_{f_{\theta}} \biggr\rrvert
\\
& \leq \lVert f_{\theta '}-f_{\theta }\rVert _{L^{2}}\sup
_{\lVert
\varphi \rVert _{H^{2}}\leq 1}\lVert \nabla \varphi \cdot \nabla u_{f_{
\theta }}
\rVert _{L^{2}}
\\*
& \leq \lVert f_{\theta '}-f_{\theta }\rVert _{L^{2}}\lVert
u_{f_{
\theta }}\rVert _{C^{1}}.
\end{align*}
The result follows then from Lemma~\ref{lem:boundedSolutions} and (\ref{eq:Phi_C1})
such that
\begin{equation*}
\lVert f_{\theta '}-f_{\theta }\rVert _{L^{2}} \lesssim \bigl
\lVert \Phi \bigl( \theta '-\theta \bigr)\bigr\rVert
_{L^{2}}=\bigl\lVert \theta '-\theta \bigr\rVert .
\end{equation*}\qedhere
\end{proof}

\begin{prop}
\label{prop:DarcyDeriv}%
Let $\theta \in \mathbb{R}^{p}$, $v\in \mathbb{R}^{p}$ and set
$f_{\theta ,v}=e^{\Phi (\theta )}\Phi (v)$,
$f_{\theta ,v,2}=e^{\Phi (\theta )}\Phi (v)^{2}$. Then we have for
$x\in \mathcal{X}$ the formulas
\begin{align*}
v^{\top}\nabla _{\theta}\mathcal{G}(\theta ) (x) & =- \bigl(
\mathcal{L}_{f_{\theta}}^{-1}\mathcal{L}_{f_{\theta ,v}}u_{f_{\theta}}
\bigr) (x),
\\
v^{\top}\nabla _{\theta}^{2}\mathcal{G}(\theta ) (x)v &
=2 \bigl( \mathcal{L}_{f_{\theta}}^{-1}\mathcal{L}_{f_{\theta ,v}}
\mathcal{L}_{f_{
\theta}}^{-1}\mathcal{L}_{f_{\theta ,v}}u_{f_{\theta}}
\bigr) (x)- \bigl(\mathcal{L}_{f_{\theta}}^{-1}\mathcal{L}_{f_{\theta ,v,2}}u_{f_{
\theta}}
\bigr) (x).
\end{align*}
\end{prop}
\begin{proof}
Let us write $G(f)=u_{f}$ such that
$\mathcal{G}(\theta )=G(f_{\theta})$. We will establish for
\begin{equation*}
G:H^{\alpha}(\mathcal{X})\cap \bigl\{f:f(x)>0,x\in \bar{\mathcal{X}}\bigr
\} \rightarrow C(\mathcal{X})
\end{equation*}
and $h,h'\in H^{\alpha}(\mathcal{X})$ as
$\lVert h\rVert _{H^{\alpha}}\rightarrow 0$ and
$\lVert h'\rVert _{H^{\alpha}}\rightarrow 0$, respectively, that
\begin{align}
\bigl\lVert G(f+h)-G(f)-A_{1}(f)[h]\bigr\rVert _{L^{\infty}} & =O
\bigl(\lVert h \rVert _{H^{\alpha}}^{2} \bigr), \label{eq:gradConv}
\\
\bigl\lVert A_{1}\bigl(f+h'\bigr)[h]-A_{1}(f)[h]-A_{2}(f)
\bigl[h,h'\bigr]\bigr\rVert _{L^{\infty}} & =O \bigl(\bigl\lVert
h'\bigr\rVert _{H^{\alpha}}^{2} \bigr), \label{eq:HessianConv}
\end{align}
with continuous linear operators
$A_{1}(f):H^{\alpha}(\mathcal{X})\rightarrow C(\mathcal{X})$,
$A_{2}(f):H^{\alpha}(\mathcal{X})\times H^{\alpha}(\mathcal{X})
\rightarrow C(\mathcal{X})$ given by
\begin{align}
A_{1}(f)[h] & =-\mathcal{L}_{f}^{-1}
\mathcal{L}_{h}G(f), \label{eq:DG} 
\\
A_{2}(f)\bigl[h,h'\bigr] & =\mathcal{L}_{f}^{-1}
\mathcal{L}_{h'}\mathcal{L}_{f}^{-1}
\mathcal{L}_{h}G(f)+\mathcal{L}_{f}^{-1}
\mathcal{L}_{h}\mathcal{L}_{f}^{-1}
\mathcal{L}_{h'}G(f). \label{eq:D2G} 
\end{align}
This implies that $G$ is two-times continuously Fr\'{e}chet differentiable
with derivatives $DG(f)=A_{1}(f)$, $D^{2}G(f)=A_{2}(f)$. Since the map
$\theta \mapsto f_{\theta}=f_{\operatorname{min}}+e^{\Phi (\theta )}$ in
(\ref{eq:f_theta}) satisfies on $\mathbb{R}^{p}$
\begin{align*}
v^{\top}\nabla _{\theta} f_{\theta} & =e^{\Phi (\theta )}
\Phi (v)=f_{
\theta ,v},\qquad v^{\top}\nabla _{\theta}^{2}f_{\theta}v=e^{\Phi (
\theta )}
\Phi (v)^{2}=f_{\theta ,v,2},
\end{align*}
the claim follows from the chain rule and from replacing
$G(f_{\theta})=u_{f_{\theta}}$.

Consider now $h,h'\in H^{\alpha}(\mathcal{X})$ with sufficiently small
$\lVert \cdot \rVert _{H^{\alpha}}$-norms such that $f+h$, $f+h'$ are strictly
positive on $\mathcal{X}$ and $G(f+h)$, $G(f+h')$, $G(f)$ are well-defined
with values in $H^{2}(\mathcal{X})$. Using (\ref{eq:GDiff}) twice we get
\begin{align}
\begin{aligned}
G(f+h)-G(f)-A_{1}(f)[h] & =-\mathcal{L}_{f}^{-1}
\mathcal{L}_{h} \bigl(G(f+h)-G(f) \bigr)
\\
& =\mathcal{L}_{f}^{-1}\mathcal{L}_{h}
\mathcal{L}_{f}^{-1}\mathcal{L}_{h}G(f+h).
\end{aligned}
\label{eq:firstDer} 
\end{align}
Since $G(f+h)=\mathcal{L}_{f+h}^{-1}g_{1}$, applying several times Lemmas
\ref{lem:Linv}(i) and \ref{assu:GLM_local_reg} and suppressing constants
depending on $\lVert f\rVert _{C^{1}}$, we get
\begin{equation*}
\bigl\lVert \mathcal{L}_{f}^{-1}\mathcal{L}_{h}
\mathcal{L}_{f}^{-1} \mathcal{L}_{h}G(f+h)\bigr
\rVert _{H^{2}}\lesssim \bigl(1+\lVert h\rVert _{C^{1}}\bigr)
\lVert h\rVert _{C^{1}}^{2},
\end{equation*}
implying (\ref{eq:gradConv}) by (\ref{eq:firstDer}) and the Sobolev embedding
$H^{2}(\mathcal{X})\subset L^{\infty}(\mathcal{X})$ for $d\leq 3$. Next,
we have
\begin{align*}
& \mathcal{L}_{f} \bigl(A_{1}\bigl(f+h'
\bigr)[h]-A_{1}(f)[h] \bigr)
\\
&\quad  =-\mathcal{L}_{f+h'-h'}\mathcal{L}_{f+h'}^{-1}
\mathcal{L}_{h}G\bigl(f+h'\bigr)+ \mathcal{L}_{f}^{-1}
\mathcal{L}_{h}G(f)
\\
&\quad  =\mathcal{L}_{h'}\mathcal{L}_{f+h'}^{-1}
\mathcal{L}_{h}G\bigl(f+h'\bigr)- \mathcal{L}_{h}
\bigl(G\bigl(f+h'\bigr)-G(f) \bigr)
\\
&\quad  =\mathcal{L}_{h'}\mathcal{L}_{f+h'}^{-1}
\mathcal{L}_{h}G\bigl(f+h'\bigr)+ \mathcal{L}_{h}
\mathcal{L}_{f}^{-1}\mathcal{L}_{h'}G
\bigl(f+h'\bigr).
\end{align*}
With this write
\begin{align*}
& A_{1}\bigl(f+h'\bigr)[h]-A_{1}(f)[h]-A_{2}(f)
\bigl[h,h'\bigr]
\\
&\quad  =\mathcal{L}_{f}^{-1}\mathcal{L}_{h'} \bigl(
\mathcal{L}_{f+h'}^{-1}- \mathcal{L}_{f}^{-1}
\bigr)\mathcal{L}_{h}G\bigl(f+h'\bigr)+
\mathcal{L}_{f}^{-1} \mathcal{L}_{h'}
\mathcal{L}_{f}^{-1}\mathcal{L}_{h} \bigl(G
\bigl(f+h'\bigr)-G(f) \bigr)
\\
& \qquad{} +\mathcal{L}_{f}^{-1}\mathcal{L}_{h}
\mathcal{L}_{f}^{-1} \mathcal{L}_{h'} \bigl(G
\bigl(f+h'\bigr)-G(f) \bigr)=:R_{1}+R_{2}+R_{3}.
\end{align*}
Arguing as after (\ref{eq:firstDer}) gives
\begin{align*}
\lVert R_{2}\rVert _{L^{\infty}} & =\bigl\lVert
\mathcal{L}_{f}^{-1} \mathcal{L}_{h'}
\mathcal{L}_{f}^{-1}\mathcal{L}_{h}
\mathcal{L}_{f}^{-1} \mathcal{L}_{h'}G
\bigl(f+h'\bigr)\bigr\rVert _{L^{\infty}}
\\
& \lesssim \bigl(1+\bigl\lVert h'\bigr\rVert _{C^{1}}\bigr)
\lVert h\rVert _{C^{1}}\bigl\lVert h' \bigr\rVert
_{C^{1}}^{2},
\end{align*}
and the same upper bound applies to
$\lVert R_{3}\rVert _{L^{\infty}}$. At last, for
$w\in L^{2}(\mathcal{X})$ observe that
\begin{equation}
\bigl(\mathcal{L}_{f+h'}^{-1}-\mathcal{L}_{f}^{-1}
\bigr)w=\mathcal{L}_{f+h'}^{-1}( \mathcal{L}_{f}-
\mathcal{L}_{f+h'})\mathcal{L}_{f}^{-1}w=-
\mathcal{L}_{f+h'}^{-1} \mathcal{L}_{h'}
\mathcal{L}_{f}^{-1}w, \label{eq:LinvDiff} 
\end{equation}
and so (\ref{eq:HessianConv}) follows from arguing as in the last display,
such that
\begin{equation*}
\lVert R_{1}\rVert _{L^{\infty}}\lesssim \bigl(1+\bigl\lVert
h'\bigr\rVert _{C^{1}}^{2}\bigr) \lVert h\rVert
_{C^{1}}\bigl\lVert h'\bigr\rVert _{C^{1}}^{2}.
\end{equation*}\qedhere
\end{proof}
%

\subsubsection{Proof of Theorem~\ref{Thm:Darcy}}
\label{sec4.4.5}

\begin{proof}
Below we will verify Assumption~\ref{assu:GLM_local_reg} with
$\beta =(\alpha +d)/(\alpha -d)$, $\eta =p^{-8/d}$ and $k_{1}=1/d$,
$k_{2}=7/d$, $k_{3}=0$, $k_{4}=2/d$, $k_{5}=3/d$. This implies
for $n$ large enough the growth conditions of Proposition~\ref{prop:ExpoSurrogate},
and thus verifies Assumptions~\ref{assu:a1},
\ref{assu:a3} with $\kappa _{1}=0$, $\kappa _{2}=2/d$,
$\kappa _{3}=6/d$. The result follows by arguing as in the proof of
Theorem~\ref{thm:GLM_guarantees}. We only note that
\begin{align*}
m_{\pi}=p,\qquad \Lambda _{\pi}=p^{2\alpha +1}, \qquad m
\geq c_{1}np^{-6/d}, \qquad \Lambda \leq c_{2}np^{2/d}.
\end{align*}
We proceed by verifying Assumption~\ref{assu:GLM_local_reg}.

\emph{Assumption~\ref{assu:GLM_local_reg}(i).} This follows
from Proposition~\ref{prop:DarcyDeriv}.

\emph{Assumption~\ref{assu:GLM_local_reg}(ii).} Let
$\theta \in \mathcal{B}$ and $v\in \mathbb{R}^{p}$,
$\lVert v\rVert =1$. For $\alpha \geq 21/d$ we find from the Lipschitz
bound in Lemma~\ref{lem:G_Lipschitz} that
\begin{align*}
\bigl\lVert \mathcal{G}(\theta )-\mathcal{G}(\theta _{*,p})\bigr\rVert
_{L^{2}} \lesssim \lVert \theta -\theta _{*,p}\rVert \leq \eta
.
\end{align*}
For $\alpha \geq 7/d$ we find from \eqref{eq:Phi_C1_Rp}
\begin{align}
\lVert \theta \rVert _{7/d} & \leq \lVert \theta -\theta
_{*,p} \rVert _{7/d}+\lVert \theta _{*,p}\rVert
_{7/d}\lesssim 1, \label{eq:theta_gamma} 
\end{align}
which in view of \eqref{eq:Phi_C1}, \eqref{eq:f_theta_C1} and
$\alpha /21\geq 7>5+d/2$ implies for a constant
$C\equiv C(f_{\operatorname{min}},c_{0})$ that
$\lVert \Phi (\theta )\rVert _{C^{5}}\leq C$,
$\lVert \Phi (v)\rVert _{H^{1}}\leq Cp^{1/d}$,
$\lVert \Phi (v)\rVert _{L^{\infty}}\leq Cp^{2/d}$,
$\lVert \Phi (v)\rVert _{C^{1}}\leq Cp^{3/d}$ and
\begin{align*}
\sup_{\theta \in \mathcal{B}}\lVert f_{\theta }\rVert _{H^{5}}
\lesssim \sup_{\theta \in \mathcal{B}}\lVert f_{\theta }\rVert
_{C^{5}} & \leq f_{\operatorname{min}}+\sup_{\theta \in \mathcal{B}}\bigl
\lVert e^{
\Phi (\theta )}\bigr\rVert _{C^{5}}\leq C.
\end{align*}
In particular, by Lemma~\ref{lem:boundedSolutions} for $C'>0$,
$\sup_{\theta \in \mathcal{B}}\lVert u_{f_{\theta }}\rVert _{C^{4}}
\leq C'$.

Next, recall the gradient and Hessian of $\mathcal{G}$ from Proposition~\ref{prop:DarcyDeriv}. For the sup-norm bounds observe first by Lemma~\ref{lem:Linv}(i)
\begin{align*}
\bigl\lVert \mathcal{L}_{f_{\theta }}^{-1}\mathcal{L}_{f_{\theta ,v}}u_{f_{
\theta }}
\bigr\rVert _{H^{2}} & \lesssim \bigl\lVert \nabla \cdot
\bigl(e^{
\Phi (\theta )}\Phi (v)\nabla u_{f_{\theta }} \bigr)\bigr\rVert
_{L^{2}}
\\
& \lesssim \bigl\lVert e^{\Phi (\theta )}\bigr\rVert _{C^{1}}\bigl\lVert
\Phi (v) \bigr\rVert _{H^{1}}\lVert u_{f_{\theta }}\rVert
_{C^{2}}\lesssim p^{1/d},
\end{align*}
and similarly
\begin{align*}
\bigl\lVert \mathcal{L}_{f_{\theta }}^{-1}\mathcal{L}_{f_{\theta ,v}}
\mathcal{L}_{f_{\theta }}^{-1}\mathcal{L}_{f_{\theta ,v}}u_{f_{
\theta }}
\bigr\rVert _{H^{2}} & \lesssim \bigl\lVert \nabla \cdot
\bigl(e^{
\Phi (\theta )}\Phi (v)\nabla \mathcal{L}_{f_{\theta }}^{-1}
\mathcal{L}_{f_{\theta ,v}}u_{f_{\theta }} \bigr)\bigr\rVert _{L^{2}}
\\
& \lesssim \bigl\lVert e^{\Phi (\theta )}\bigr\rVert _{C^{1}}\bigl\lVert
\Phi (v) \bigr\rVert _{C^{1}}\bigl\lVert \mathcal{L}_{f_{\theta }}^{-1}
\mathcal{L}_{f_{
\theta ,v}}u_{f_{\theta }}\bigr\rVert _{H^{2}}\lesssim
p^{4/d},
\\
\bigl\lVert \mathcal{L}_{f_{\theta }}^{-1}\mathcal{L}_{f_{\theta ,v,2}}u_{f_{
\theta }}
\bigr\rVert _{H^{2}} & \lesssim \bigl\lVert \nabla \cdot
\bigl(e^{
\Phi (\theta )}\Phi (v)^{2}\nabla u_{f_{\theta }} \bigr)\bigr
\rVert _{L^{2}}
\\
& \lesssim \bigl\lVert e^{\Phi (\theta )}\bigr\rVert _{C^{1}}\bigl\lVert
\Phi (v) \bigr\rVert _{C^{1}}\bigl\lVert \Phi (v)\bigr\rVert
_{H^{1}}\lVert u_{f_{\theta }} \rVert _{C^{2}}\lesssim
p^{4/d}.
\end{align*}
The Sobolev embedding $H^{2}(\mathcal{X})\subset C(\mathcal{X})$ in
$d\leq 3$ therefore shows
\begin{align*}
\bigl\lVert \mathcal{G}(\theta )\bigr\rVert _{L^{\infty}}\lesssim 1,\qquad
\bigl\lVert v^{\top }\nabla \mathcal{G}(\theta )\bigr\rVert
_{L^{\infty}} \lesssim p^{1/d},\qquad \bigl\lVert v^{\top }
\nabla ^{2}\mathcal{G}(\theta )v \bigr\rVert _{L^{\infty}}\lesssim
p^{4/d}.
\end{align*}
Next, for $\theta '\in \mathcal{B}$, $h,h'\in C^{1}(\mathcal{X})$ write
\begin{align*}
\mathcal{L}_{f_{\theta}}^{-1}-\mathcal{L}_{f_{\theta '}}^{-1}
& = \mathcal{L}_{f_{\theta}}^{-1}\mathcal{L}_{f_{\theta}-f_{\theta '}}
\mathcal{L}_{f_{\theta '}}^{-1},\qquad \mathcal{L}_{h}-
\mathcal{L}_{h'}= \mathcal{L}_{h-h'}=\nabla \cdot
\bigl(h-h'\bigr)\nabla ,
\end{align*}
such that Lemma~\ref{lem:Linv}(i) implies for
$w\in L^{2}(\mathcal{X})$, $w'\in H^{2}(\mathcal{X})$
\begin{align*}
\bigl\lVert \bigl(\mathcal{L}_{f_{\theta }}^{-1}-
\mathcal{L}_{f_{\theta '}}^{-1}\bigr)w \bigr\rVert _{H^{2}} &
\lesssim \bigl\lVert e^{\Phi (\theta )}-e^{\Phi (\theta ')} \bigr\rVert
_{C^{1}}\bigl\lVert \mathcal{L}_{f_{\theta '}}^{-1}w\bigr
\rVert _{H^{2}},
\\
\bigl\lVert (\mathcal{L}_{f_{\theta ,v}}-\mathcal{L}_{f_{\theta ',v}})w'
\bigr\rVert _{L^{2}} & \lesssim \bigl\lVert e^{\Phi (\theta )}-e^{\Phi (\theta ')}
\bigr\rVert _{C^{1}}\bigl\lVert \Phi (v)\bigr\rVert _{C^{1}}\bigl
\lVert w'\bigr\rVert _{H^{2}},
\\
\bigl\lVert (\mathcal{L}_{f_{\theta ,v}}-\mathcal{L}_{f_{\theta ',v}})u_{f_{
\theta }}
\bigr\rVert _{L^{2}} & \lesssim \bigl\lVert e^{\Phi (\theta )}-e^{
\Phi (\theta ')}
\bigr\rVert _{C^{1}}\bigl\lVert \Phi (v)\bigr\rVert _{H^{1}}\lVert
u_{f_{
\theta }}\rVert _{C^{2}},
\\
\bigl\lVert (\mathcal{L}_{f_{\theta ,v,2}}-\mathcal{L}_{f_{\theta ',v,2}})u_{f_{
\theta }}
\bigr\rVert _{L^{2}} & \lesssim \bigl\lVert e^{\Phi (\theta )}-e^{
\Phi (\theta ')}
\bigr\rVert _{C^{1}}\bigl\lVert \Phi (v)\bigr\rVert _{C^{1}}\bigl
\lVert \Phi (v)\bigr\rVert _{H^{1}}\lVert u_{f_{\theta }}\rVert
_{C^{2}}.
\end{align*}
Then,
$\lVert e^{\Phi (\theta )}-e^{\Phi (\theta ')}\rVert _{C^{1}}
\lesssim \lVert \Phi (\theta -\theta ')\rVert _{C^{1}}$, and so the terms
in the last display are upper bounded up to constants by
$p^{3/d}\lVert \theta -\theta '\rVert \lVert w\rVert _{L^{2}}$,
$p^{6/d}\lVert \theta -\theta '\rVert \lVert w\rVert _{H^{2}}$,
$p^{4/d}\lVert \theta -\theta '\rVert $ and
$p^{4/d}\lVert \theta -\theta '\rVert $, respectively. Combining these
estimates with Lemmas \ref{lem:L}(ii), \ref{lem:Linv}(i) and with the Lipschitz
bound from Lemma~\ref{lem:G_Lipschitz} we obtain
\begin{align*}
\bigl\lVert \nabla ^{2}\mathcal{G}(\theta )-\nabla ^{2}
\mathcal{G}\bigl(\theta '\bigr) \bigr\rVert _{L^{\infty}(\mathcal{X},\mathbb{R}^{p\times p})} &
\lesssim p^{7/d} \bigl\lVert \theta -\theta '\bigr\rVert .
\end{align*}
Regarding the wanted $L^{2}$-bounds, use Lemma~\ref{lem:Linv}(iii) to the
extent that
\begin{equation*}
\bigl\lVert \mathcal{L}_{f_{\theta }}^{-1}\mathcal{L}_{f_{\theta ,v}}u_{f_{
\theta }}
\bigr\rVert _{H^{1}}\lesssim \bigl\lVert e^{\Phi (\theta )}\Phi (v) \nabla
u_{f_{\theta }}\bigr\rVert _{L^{2}}\lesssim \bigl\lVert
e^{\Phi (\theta )} \bigr\rVert _{L^{\infty}}\bigl\lVert \Phi (v)\bigr\rVert
_{L^{2}}\lVert u_{f_{
\theta }}\rVert _{C^{1}}\lesssim 1,
\end{equation*}
as well as
\begin{align*}
\bigl\lVert \mathcal{L}_{f_{\theta }}^{-1}\mathcal{L}_{f_{\theta ,v}}
\mathcal{L}_{f_{\theta }}^{-1}\mathcal{L}_{f_{\theta ,v}}u_{f_{
\theta }}
\bigr\rVert _{H^{1}} &\lesssim \bigl\lVert e^{\Phi (\theta )}\Phi (v) \nabla
\bigl(\mathcal{L}_{f_{\theta }}^{-1}\mathcal{L}_{f_{\theta ,v}}u_{f_{
\theta }}
\bigr)\bigr\rVert _{L^{2}}
\\
& \lesssim \bigl\lVert e^{\Phi (\theta )}\bigr\rVert _{L^{\infty}}\bigl
\lVert \Phi (v)\bigr\rVert _{L^{\infty}}\bigl\lVert \mathcal{L}_{f_{\theta }}^{-1}
\bigl(\nabla \cdot \bigl(e^{\Phi (\theta )}\Phi (v)\nabla u_{f_{
\theta }} \bigr)
\bigr)\bigr\rVert _{H^{1}}
\\
& \lesssim p^{2/d}\bigl\lVert e^{\Phi (\theta )}\Phi (v)\nabla
u_{f_{
\theta }}\bigr\rVert _{L^{2}}\lesssim p^{2/d}\bigl
\lVert \Phi (v)\bigr\rVert _{L^{2}} \lVert u_{f_{\theta }}\rVert
_{C^{1}}\lesssim p^{2/d},
\\
\bigl\lVert \mathcal{L}_{f_{\theta }}^{-1}\mathcal{L}_{f_{\theta ,v,2}}u_{f_{
\theta }}
\bigr\rVert _{H^{1}} & \lesssim \bigl\lVert e^{\Phi (\theta )}\Phi
(v)^{2} \nabla u_{f_{\theta }}\bigr\rVert _{L^{2}}
\\
& \lesssim \bigl\lVert e^{\Phi (\theta )}\bigr\rVert _{L^{\infty}}\bigl
\lVert \Phi (v)\bigr\rVert _{L^{\infty}}\bigl\lVert \Phi (v)\bigr\rVert
_{L^{2}}\lVert u_{f_{
\theta }}\rVert _{C^{1}}\lesssim
p^{2/d}.
\end{align*}
In all, since
$\lVert \cdot \rVert _{L^{2}}\leq \lVert \cdot \rVert _{H^{1}}$, Assumption~\ref{assu:GLM_local_reg}(ii) holds with the $k_{i}$, $i=1,\dots ,4$ as
claimed.

\emph{Assumption~\ref{assu:GLM_local_reg}(iii).} An interpolation inequality
for Sobolev spaces (see e.g.,
\citep[Theorems 1.15 and 1.35]{yagi2009}) yields
\begin{equation*}
\lVert w\rVert _{H^{2}}\lesssim \lVert w\rVert _{L^{2}}^{1/2}
\lVert w \rVert _{H^{4}}^{1/2},\quad 0\neq w\in H^{4}(
\mathcal{X}).
\end{equation*}
Applying this to
$w=\mathcal{L}_{f_{\theta}}^{-1}\mathcal{L}_{f_{\theta ,v}}u_{f_{
\theta}}$ and observing the inequalities
$\lVert \mathcal{L}_{f}w\rVert _{L^{2}}\lesssim \lVert w\rVert _{H^{2}}$,
$\lVert w\rVert _{H^{4}}\lesssim \lVert \mathcal{L}_{f}w\rVert _{H^{2}}$
shows
\begin{align*}
\bigl\lVert v^{\top }\nabla \mathcal{G}(\theta )\bigr\rVert
_{L^{2}}=\lVert w \rVert _{L^{2}} & \gtrsim \frac{\lVert \mathcal{L}_{f_{\theta ,v}}u_{f_{\theta }}\rVert _{L^{2}}^{2}}{\lVert \mathcal{L}_{f_{\theta ,v}}u_{f_{\theta }}\rVert _{H^{2}}}.
\end{align*}
Recall the stability estimate from Lemma~\ref{lem:DarcyCondition}(ii),
which yields for
$h=e^{\Phi (\theta )}\Phi (v)\in H_{0}^{1}(\mathcal{X})$ (here we use that
$\Phi (\theta )\in H_{0}^{1}(\mathcal{X})$)
\begin{align*}
\lVert \mathcal{L}_{f_{\theta ,v}}u_{f_{\theta }}\rVert _{L^{2}}^{2}
& \gtrsim \bigl\lVert e^{\Phi (\theta )}\Phi (v)\bigr\rVert _{L^{2}}^{2}
\gtrsim e^{-2
\lVert \Phi (\theta )\rVert _{L^{\infty}}}\bigl\lVert \Phi (v)\bigr\rVert _{L^{2}}^{2}
\gtrsim 1,
\end{align*}
while Lemma~\ref{lem:boundedSolutions} and (\ref{eq:Phi_C1}) provide us
with the upper bound
\begin{align*}
\lVert \mathcal{L}_{f_{\theta ,v}}u_{f_{\theta }}\rVert _{H^{2}} &
\leq \bigl\lVert e^{\Phi (\theta )}\Phi (v)\bigr\rVert _{H^{3}}\lVert
u_{f_{
\theta }}\rVert _{C^{4}}\lesssim \bigl\lVert e^{\Phi (\theta )}
\bigr\rVert _{C^{3}} \bigl\lVert \Phi (v)\bigr\rVert _{H^{3}}
\lesssim p^{3/d}.
\end{align*}
The last three displays yield the wanted lower bound with
$k_{5}=3/d$.

\emph{Assumption~\ref{assu:GLM_local_reg}(iv).} Since
$d\alpha \geq 7>1+d/2$, use Lemmas \ref{lem:Darcy_stability} and
\ref{lem:G_Lipschitz} for the stated $\beta $.
\end{proof}

\bibliographystyle{apalike2}
\bibliography{refs}

\end{document}